\begin{document}
\title[Perfectoid towers generated from prisms]{Perfectoid towers generated from prisms}

\author[R. Ishizuka]{Ryo Ishizuka}
\address{Department of Mathematics, Tokyo Institute of Technology, 2-12-1 Ookayama, Meguro, Tokyo 152-8551}
\email{ishizuka.r.ac@m.titech.ac.jp}

\thanks{2020 {\em Mathematics Subject Classification\/}: 13B02, 14G45}

\keywords{Perfectoid rings, prisms, perfectoid towers, Frobenius lifts, \(\delta\)-rings}


\begin{abstract}
    We present a unified construction of perfectoid towers from specific prisms which covers all the previous constructions of (\(p\)-torsion-free) perfectoid towers.
    By virtue of the construction, perfectoid towers can be systematically constructed for a large class of rings with Frobenius lift.
    Especially, any Frobenius lifting of a reduced \(\setF_p\)-algebra has a perfectoid tower.
\end{abstract}

\maketitle 

\setcounter{tocdepth}{2}
\tableofcontents

\section{Introduction}

Let \(p\) be a prime number.
After Andr\'e \cite{andre2018Conjecture} and Bhatt \cite{bhatt2018Direct}, the theory of perfectoid rings has been applied to various fields of commutative algebra, especially in the study of mixed characteristic.
Recently, Ishiro--Nakazato--Shimomoto \cite{ishiro2025Perfectoid} developed the notion of \emph{perfectoid towers} (\Cref{PerfectoidTower}), which is a ``tower-theoretic'' generalization of perfectoid rings and gives an axiomatic Noetherian approximation of perfectoid rings.
The existence of perfectoid towers over a given (Noetherian local) ring is an extremely non-trivial problem and it is not known for general rings.
Bhatt--Scholze \cite{bhatt2022Prismsa} also generalized perfectoid rings by the notion of \emph{prisms} (\Cref{DefPrisms}), which is a ``deperfection'' of perfectoid rings and a building block of the theory of prismatic cohomology.

In this paper, we connect those two objects by constructing perfectoid towers from specific prisms.
All the previous constructions of (\(p\)-torsion-free) perfectoid towers can be unified by our construction.
Those examples are in \Cref{sec:examples}.
Our main theorem is the following:

\begin{theorem}[{Special case of \Cref{PerfectoidTowerPrism}}] \label{MainTheoremPrism}
    Let \((A, (d))\) be an (orientable) prism such that \(p, d\) is a regular sequence on \(A\) and \(A/pA\) is \(p\)-root closed\footnote{A ring \(A/pA\) is \emph{\(p\)-root closed in \(A/pA[1/d]\)} if \(x \in A/pA[1/d]\) satisfies \(x^{p^n} \in A/pA\) for some \(n \geq 1\), then \(x \in A/pA\) holds.} in \(A/pA[1/d]\).
    Then the tower of rings
    \begin{equation*}
        R_0 \defeq A/dA \xrightarrow{\varphi} A/\varphi(d)A \xrightarrow{\varphi} A/\varphi^2(d)A \xrightarrow{\varphi} \cdots \xrightarrow{\varphi} A/\varphi^i(d)A \xrightarrow{\varphi} \cdots
    \end{equation*}
    induced from the Frobenius lift \(\map{\varphi}{A}{A}\) becomes a perfectoid tower (\Cref{PerfectoidTower}) with injective transition maps.
    The \(p\)-completed colimit of the tower is isomorphic to the \(p\)-adic completion \((A_{\perf}/dA_{\perf})^{\wedge_p}\), where \(A_{\perf} \defeq \colim_{\varphi} A\).
    Furthermore, the tilt (\Cref{TiltPerfectoidTower}) of the perfectoid tower is isomorphic to
    \begin{equation*}
        A/pA \xrightarrow{F} A/pA \xrightarrow{F} A/pA \xrightarrow{F} \cdots,
    \end{equation*}
    where \(\map{F}{A/pA}{A/pA}\) is the Frobenius map.
\end{theorem}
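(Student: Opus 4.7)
\emph{Proof plan.} The strategy is to verify the axioms of \Cref{PerfectoidTower} for the tower $R_i \defeq A/\varphi^i(d)A$ one by one, and then identify its $p$-completed colimit and its tilt.

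\emph{Preliminary identities.} Since $\varphi$ is a Frobenius lift, $\varphi(x) \equiv x^p \pmod{p}$ for every $x \in A$, so $\varphi^i(d) \equiv d^{p^i} \pmod{p}$. Because $d$ is distinguished in the prism $(A,(d))$, one has $\varphi(d) = d^p + p\,\delta(d)$ with $\delta(d) \in A^{\times}$. Combined with the regularity of $p,d$, this gives that $p,\varphi^i(d)$ is a regular sequence on $A$ for every $i \geq 0$; in particular each $R_i$ is $p$-torsion-free and $R_i/pR_i \cong A/(p, d^{p^i})$.

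\emph{Transition maps.} The inclusion $\varphi(\varphi^i(d) A) \subseteq \varphi^{i+1}(d)A$ lets $\varphi$ descend to ring maps $t_i : R_i \to R_{i+1}$. Injectivity of $t_i$ amounts to the equality $\varphi^{-1}(\varphi^{i+1}(d)A) = \varphi^i(d)A$, which follows from the regular-sequence computation together with the fact that the prism structure, plus $p$-torsion-freeness, forces $\varphi : A \to A$ to be injective.

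\emph{Perfectoid-tower axioms.} Modulo $p$, the map $t_i$ becomes the Frobenius $A/(p, d^{p^i}) \to A/(p, d^{p^{i+1}})$, $x \mapsto x^p$. The central axiom of \Cref{PerfectoidTower}—that this Frobenius identifies $R_i/p$ with a specified ``Frobenius-fixed'' subring of $R_{i+1}/p$—is where the $p$-root closedness of $A/pA$ in $A/pA[1/d]$ is used: if $y \in R_{i+1}/p$ lifts to a $p$-th power in $A/pA[1/d]$ of an element whose $p^n$-th power lies in $A/pA$, then that element already lies in $A/pA$ and hence descends to an element of $R_i/p$. The remaining axioms (comparison of the ideals $(\varphi^i(d))$ and $(\varphi^{i+1}(d))^p$ up to units modulo $p$, together with the $p$-torsion-freeness already established) follow directly from the distinguished-element identity $\varphi(d) = d^p + p\,\delta(d)$.

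\emph{Colimit and tilt.} Inside $A_{\perf} = \colim_\varphi A$, the images of all the ideals $\varphi^i(d)A$ coincide with $d\,A_{\perf}$, so $\colim_i R_i = A_{\perf}/dA_{\perf}$, and $p$-adic completion yields the stated formula. For the tilt, the modulo-$p$ tower is $A/(p, d^{p^i})$ with Frobenius transition maps, and the construction of \Cref{TiltPerfectoidTower} identifies each level with $A/pA$ equipped with the absolute Frobenius $F$; the $p$-root closedness hypothesis is precisely what guarantees that these comparison maps are isomorphisms rather than merely surjections.

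\emph{Main obstacle.} The essential technical point is matching the $p$-root closedness hypothesis against the precise axiom of \Cref{PerfectoidTower} governing the image of Frobenius modulo $p$; once this translation is made, everything else reduces to manipulating the identity $\varphi(d) = d^p + p\,\delta(d)$ with $\delta(d) \in A^{\times}$ and its iterates under $\varphi$.
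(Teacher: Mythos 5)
Your overall strategy—verify the axioms of \Cref{PerfectoidTower} one by one, then identify the colimit and the tilt—matches the paper's approach. But there is a genuine gap in your treatment of the injectivity of the transition maps.

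You claim that the equality $\varphi^{-1}(\varphi^{i+1}(d)A) = \varphi^{i}(d)A$ ``follows from the regular-sequence computation together with the fact that the prism structure, plus $p$-torsion-freeness, forces $\varphi : A \to A$ to be injective.'' This does not follow. Injectivity of $\varphi$ only gives you the trivial inclusion $\varphi^{i}(d)A \subseteq \varphi^{-1}(\varphi^{i+1}(d)A)$; for the reverse inclusion, suppose $\varphi(a) = \varphi^{i+1}(d)\,b$. You would like to write $b = \varphi(b')$ and cancel $\varphi$, but $\varphi$ is not surjective in general, so this fails. Reducing mod $p$, what you actually need is: if $a^p \in d^{p^{i+1}}(A/pA)$ then $a \in d^{p^{i}}(A/pA)$—and this is \emph{precisely} the $p$-root-closedness hypothesis (applied to $a/d^{p^i} \in (A/pA)[1/d]$). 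The paper's \Cref{InjectivityTower} then combines this mod-$p$ step with an inductive lift to all $p^j$ and uses $p$-adic separatedness of $A/\varphi^{i+1}(d)A$ to conclude. So the $p$-root-closedness is used for the injectivity of the transition maps \emph{and} for axiom (b); your proposal charges it only to the latter, which leaves the former unproven.

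A smaller point: your description of the tilt is off. The $i$-th small tilt is the inverse limit of the mod-$p$ terms $A/(p, d^{p^{i+k}})$ along the \emph{Frobenius projections} (quotient maps), and this inverse limit is $(A/pA)^{\wedge_d}$, which coincides with $A/pA$ here because $A$ is assumed derived $(p,d)$-complete. The $p$-root-closedness has nothing to do with ``comparison maps being isomorphisms rather than surjections'' in the tilt; the Frobenius projections are surjective by construction.
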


This shows that, once we have such a prism, we can construct a perfectoid tower systematically and its tilt can be obtained immediately.
As a corollary, if a \(\delta\)-ring \(R\) satisfies some (weaker) conditions, then \(R\) has a perfectoid tower:

\begin{theorem}[{Special case of \Cref{ExplicitPerfectoidTowerDeltaRing}}] \label{MainTheoremDeltaRing}
    Let \(R\) be a \(p\)-torsion-free \(p\)-adically complete \(\delta\)-ring such that \(R/pR\) is reduced.
    Fix compatible sequences \(\{p^{1/p^i}\}_{i \geq 0}\) and \(\{\zeta_{p^i}\}_{i \geq 0}\) of \(p\)-power roots of \(p\) and unity in \(\overline{\setQ}\).
    Then the towers of rings
    \begin{align*}
        R \hookrightarrow R^{1/p} \otimes_{\setZ} \setZ[p^{1/p}] \hookrightarrow \cdots \hookrightarrow R^{1/p^i} \otimes_{\setZ} \setZ[p^{1/p^i}] \hookrightarrow \cdots, \, \text{and} \\
        R[\zeta_p] \hookrightarrow R^{1/p}[\zeta_{p^2}] \hookrightarrow \cdots \hookrightarrow R^{1/p^i}[\zeta_{p^{i+1}}] \hookrightarrow \cdots
    \end{align*}
    are perfectoid towers arising from \((R, (p))\) and \((R[\zeta_p], (p))\) respectively, where \(R^{1/p^i} \defeq \colim\{R \xrightarrow{\varphi} R \xrightarrow{\varphi} \cdots \xrightarrow{\varphi} R\}\) is the colimit consisting of \(i+1\) terms.

    The \(p\)-completed colimits of the towers are isomorphic to \((R_{\perf} \otimes_{\setZ} \setZ[p^{1/p^{\infty}}])^{\wedge_p}\) and \((R_{\perf} \otimes_{\setZ} \setZ[\zeta_{p^\infty}])^{\wedge_p}\) respectively, where \(R_{\perf} \defeq \colim_{\varphi} R\).
    Moreover, their tilts are both isomorphic to the tower
    \begin{equation*}
        R/pR[|T|] \xrightarrow{F} R/pR[|T|] \xrightarrow{F} R/pR[|T|] \xrightarrow{F} \cdots,
    \end{equation*}
    where \(R/pR[|T|]\) is the formal power series ring over \(R/pR\) with a variable \(T\).
\end{theorem}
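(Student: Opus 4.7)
The plan is to realize each of the two towers as coming from an explicit orientable prism over $R$ and then invoke \Cref{MainTheoremPrism}. For the first tower, take $A_1 \defeq R[|T|]$ with the $\delta$-structure extending that of $R$ by $\delta(T) = 0$ (so $\varphi_{A_1}(T) = T^p$), and set $d_1 \defeq T - p$; a direct computation shows that $\delta(d_1)$ is a unit in $A_1$, so $(A_1,(d_1))$ is an orientable prism. For the second tower, take the $q$-analogue $A_2 \defeq R[|q-1|]$ with $\delta(q) = 0$ and $d_2 \defeq [p]_q = (q^p - 1)/(q - 1)$, which is again distinguished.

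Next, I would identify the quotients $A_i/\varphi^j(d_i) A_i$ with the rings appearing in the statement. Using the fixed compatible system $\{p^{1/p^j}\}$, the assignment $T \mapsto p^{1/p^j}$ gives an isomorphism $A_1/\varphi^j(d_1) A_1 \cong R \otimes_{\setZ} \setZ[p^{1/p^j}]$, under which $\varphi_{A_1}$ descends to $\varphi_R$ on the $R$-factor together with the natural inclusion $\setZ[p^{1/p^j}] \hookrightarrow \setZ[p^{1/p^{j+1}}]$ on the roots of $p$. Unfolding the definition $R^{1/p^j} = \colim(R \xrightarrow{\varphi} \cdots \xrightarrow{\varphi} R)$ and using injectivity of $\varphi_R$ (which holds because $R$ is $p$-torsion-free, $p$-adically complete, and $R/pR$ is reduced), this matches the first tower on the nose. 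The analogous calculation for $A_2$, now with $q \mapsto \zeta_{p^{j+1}}$, gives $A_2/\varphi^j(d_2) A_2 \cong R[\zeta_{p^{j+1}}]$ with compatible transitions and produces the second tower.

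The remaining task is to verify the hypotheses of \Cref{MainTheoremPrism} for each $(A_i,(d_i))$. Regularity of $p, d_i$ on $A_i$ follows because $A_i$ is $p$-torsion-free and $d_i$ has a non-zero-divisor image modulo $p$: one has $d_1 \equiv T$ in $(R/pR)[|T|]$ and $d_2 \equiv (q-1)^{p-1}$ in $(R/pR)[|q-1|]$. For the $p$-root-closed condition on $A_i/pA_i$ inside its localization at $d_i$, I would reduce to showing that a Laurent series over $R/pR$ whose $p^n$-th power is a genuine power series must itself be a genuine power series; this follows by reading off the lowest-order term and using that $R/pR$ is reduced. Once the hypotheses are in hand, \Cref{MainTheoremPrism} yields the perfectoid tower structure, identifies the $p$-completed colimit with $(A_{i,\perf}/d_i)^{\wedge_p}$, which under the substitutions $T = p$ and $q = \zeta_p$ become $(R_{\perf} \otimes_{\setZ} \setZ[p^{1/p^\infty}])^{\wedge_p}$ and $(R_{\perf} \otimes_{\setZ} \setZ[\zeta_{p^\infty}])^{\wedge_p}$ respectively, and identifies the tilts with the Frobenius towers on $A_i/pA_i$, both isomorphic to $R/pR[|T|]$ (after the harmless renaming $T \defeq q - 1$ in the second case). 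I expect the main obstacle to be the $p$-root-closedness check together with the bookkeeping required to match the prismatic quotients with the $R^{1/p^j}$-phrased towers, but both steps are elementary once the reducedness of $R/pR$ is available.
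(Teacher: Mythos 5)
Your proof is correct, but it follows a genuinely different route from the paper. You build the prisms $(R[|T|],(T-p))$ and $(R[|q-1|],([p]_q))$ directly over $R$, check the hypotheses of \Cref{MainTheoremPrism} (regular sequence, $p$-root-closedness of $R/pR[|T|]$ inside its localization at $T$) there, and then identify $A/\varphi^j(d)A$ with the terms of the tower. The paper instead proves \Cref{MainTheoremDeltaRing} as a special case of \Cref{ExplicitPerfectoidTowerDeltaRing}, whose proof takes the ``small'' Zariskian prisms $((1+(T))^{-1}\setZ_{(p)}[T],(p-T))$ and $((1+(q-1))^{-1}\setZ_{(p)}[q],([p]_q))$ over $V=\setZ_{(p)}$, verifies the hypotheses there, and then invokes the base-change theorem \Cref{BaseChangePerfectoidTower} to tensor with $R$ over $V$. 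The two approaches trade off as follows. Yours is more self-contained: you only need \Cref{MainTheoremPrism} and avoid setting up the auxiliary prism $P_{A,V}(R)$, its Zariskization, and the lemma \Cref{pRootClosedLocalization}. The price is that you must require $R$ to be $p$-adically complete (so that $R[|T|]$ is a genuine prism and not merely Zariskian), which is exactly the extra hypothesis of \Cref{MainTheoremDeltaRing}; the paper's base-change route works already for $p$-Zariskian $R$, which is why the paper states \Cref{ExplicitPerfectoidTowerDeltaRing} in that generality and derives \Cref{MainTheoremDeltaRing} from it. The paper also gets \Cref{BaseChangePerfectoidTower} as a reusable intermediate result, which is then used elsewhere (e.g.\ in the \(\delta\)-stabilization examples).

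Two bookkeeping points worth tightening if you write this up. First, the isomorphism $A_1/\varphi^j(d_1)A_1 \cong R\otimes_\setZ \setZ[p^{1/p^j}]$ that you state is with $R$ (not $R^{1/p^j}$) on the left factor, and under it the transition map becomes $\varphi_R\otimes(\text{incl})$; to match the tower in the statement, one should make explicit that the isomorphism $c_j^j\colon R\xrightarrow{\cong}R^{1/p^j}$ from \Cref{IsomorphicColimits} intertwines $\varphi_R$ with the colimit transition map $t_j$, which is what lets you rewrite the tower in the form $R^{1/p^j}\otimes_\setZ\setZ[p^{1/p^j}]$. Second, for the $p$-completed colimit you should actually run the identification $(A_\perf/d_1A_\perf)^{\wedge_p}\cong(\colim_j R^{1/p^j}\otimes_\setZ\setZ[p^{1/p^j}])^{\wedge_p}\cong(R_\perf\otimes_\setZ\setZ[p^{1/p^\infty}])^{\wedge_p}$ through the same commutation of filtered colimits used in \Cref{PartiallyPerfection}, rather than just substituting $T=p$; the answer is right but the passage merits a sentence. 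Neither point affects the correctness of the argument.
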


To accomplish this theorem, we prove that the tensor product of a perfectoid tower \((\{A/\varphi^i(I)A\}, \{\varphi\})\) generated from a prism \((A, I)\) and a tower of rings \((\{R^{1/p^i}\}, \{\varphi_R\})\) generated from a \(\delta\)-ring \(R\) is again a perfectoid tower (\Cref{BaseChangePerfectoidTower}).
The degree of the generic extension of those transition maps are also determined in \Cref{GenericRankPerfectoidTower} in some cases.


This theorem says that (the \(p\)-adic completion of) any Frobenius lifting of a reduced Noetherian ring of characteristic \(p\) has a perfectoid tower (\Cref{PerfectoidTowerSingularity}).
Typical algebraic examples are the completion of any Stanley--Reisner ring \(\setZ_p[|\underline{T}|]/I_{\Delta}\) over \(\setZ_p\), i.e., quotients of square-free monomial ideals (\Cref{SquareFreeExample}) and the completion of any affine semigroup ring \(\setZ_p[|H|]\) over \(\setZ_p\) (\Cref{ExampleSemigroup}).
Moreover, geometrically, we can take a ring of sections \(R(\mcalX, \mcalL)\) of a canonical lift \(\mcalA\) of ordinary Abelian variety \(A\) and some ample line bundle \(\mcalL\) on \(\mcalA\).

In addition to these examples, we use a method of \(\delta\)-stabilization of ideals (\Cref{DeltaHeight}) and give a sufficient condition that the quotient of a formal power series ring by a \(\delta\)-stable ideal has a perfectoid tower in \Cref{ExampleDeltaStabilization}. This condition is in a form that can be determined by hand or computer algebra systems.

All previous examples of Noetherian perfectoid towers in \cite{ishiro2025Perfectoid} arise from regular local rings or local log-regular rings, which are Cohen-Macaulay and normal domains.
However, based on our results, the following examples of perfectoid towers are given:

\begin{example} \label{ExampleSummary}
    We can get perfectoid towers arising from a Noetherian ring which is
    \begin{itemize}
        \item a ramified complete intersection but not an integral domain (\Cref{SquareFreeExamplePrism}),
        \item a non-Cohen-Macaulay non-normal complete local domain (\Cref{ExampleSemigroupnonCM}),
        \item a non-regular complete intersection domain but not normal (\Cref{ExampleSemigroupnonNor}),
        \item a reduced non-Cohen-Macaulay and non-integral domain (\Cref{SquareFreeExample}),
        \item an unramified complete intersection domain but not log-regular with \(p=2\) (\Cref{ExampleDeltaStabilizationp2} following \Cref{p23CaseDeltaStabilization}), or
        \item a non-Cohen-Macaulay normal domain (\Cref{ExampleCone}).
    \end{itemize}
\end{example}

The first one is done by our first main theorem \Cref{MainTheoremPrism}. The next three examples are combinatorial examples such as affine semigroup rings and Stanley--Reisner rings. The fifth one is given by a (computer) calculation of \(\delta\)-stabilization of ideals. The last example is given by a ring of sections \(R(\mcalX, \mcalL)\) of geometric objects as explained above.

Very recently, Ishiro--Shimomoto \cite{ishiro2025drings} study a relation between perfectoid towers and lim Cohen-Macaulay sequences introduced by Bhatt--Hochster--Ma \cite{bhatt2024Lim} and give another way to construct perfectoid towers from certain \(\delta\)-rings. Our work is independent of theirs, but both works show that perfectoid towers can be constructed from various rings with, at least, Frobenius lift.
\addtocontents{toc}{\protect\setcounter{tocdepth}{-1}} 
\section*{Acknowledgements} 
\addtocontents{toc}{\protect\setcounter{tocdepth}{2}} 
The author would like to thank Shinnosuke Ishiro, Kei Nakazato and Kazuma Shimomoto for their valuable discussions about perfectoid towers, Shou Yoshikawa for his pointing out the usage of \(\phi\)-monomials, and Sora Miyashita for the many things he taught me about affine semigroup rings. Anonymous referees gave helpful comments and suggestions, for example adding more examples, examining the generic degrees of the transition maps, and pointing out some mistakes.
This work was supported by JSPS KAKENHI Grant Number 24KJ1085.

\section{Prisms}


In this section, we recall the notion of prisms and fix some terminology of towers of (\(\delta\)-)rings and prisms.


\begin{definition}[{\cite[Definition 2.1]{bhatt2022Prismsa}}] \label{DefDeltaRing}
    Let \(A\) be a ring.
    A \emph{\(\delta\)-structure on \(A\)} is a map of sets \(\map{\delta}{A}{A}\) such that \(\delta(0) = \delta(1) = 0\) and
    \begin{equation*}
        \delta(a + b) = \delta(a) + \delta(b) + \frac{a^p + b^p - (a + b)^p}{p}; \delta(ab) = a^p \delta(b) + b^p \delta(a) + p\delta(a)\delta(b)
    \end{equation*}
    for all \(a, b \in A\).
    A \emph{\(\delta\)-ring} is a pair \((A, \delta)\) of a ring \(A\) and a \(\delta\)-structure on \(A\).
    We often omit the \(\delta\)-structure \(\delta\) and simply say that \(A\) is a \(\delta\)-ring.
    An element \(d \in A\) is called a \emph{distinguished element} if \(\delta(d)\) is invertible in \(A\).

    On a \(\delta\)-ring \(A\), a map of sets \(\map{\varphi}{A}{A}\) is defined as
    \begin{equation*}
        \varphi(a) \defeq a^p + p \delta(a)
    \end{equation*}
    for all \(a \in A\).
    By the definition of \(\delta\), the map \(\varphi\) gives a ring endomorphism and we call it the \emph{Frobenius lift} on the \(\delta\)-ring \(A\).
    This induces the Frobenius map on \(A/pA\).

    We often use the symbol \(\varphi_*(-)\) and \(F_*(-)\) as the restriction of scalars along \(\varphi\) and \(F\), respectively.
\end{definition}

\begin{definition}[{\cite{bhatt2022Prismsa}}] \label{DefPrisms}
    A \emph{preprism} is a pair \((A, I)\) where \(A\) is a \(\delta\)-ring and \(I\) is an invertible ideal of \(A\).
    A preprism \((A, I)\) is a \emph{prism} (resp., \emph{Zariskian prism})\footnote{The terminology \emph{Zariskian prism} is non-standard and temporary, but we need to emphasize to only assume the Zariskian property instead of (derived) completeness.} if the following holds.
    \begin{enumerate}
        \item \(A\) is derived \((p, I)\)-complete (resp., \((p, I)\)-Zariskian).
        \item \(p \in I + \varphi(I)A\).
    \end{enumerate}

    A (pre)prism \((A, I)\) is called
    \begin{enumerate}
        \item \emph{perfect} if \(A\) is a perfect \(\delta\)-ring, i.e., \(\varphi\) is an isomorphism,
        \item \emph{bounded} if \(A/I\) has bounded \(p^\infty\)-torsion,
        \item \emph{orientable} if \(I\) is a principal ideal of \(A\),
        \item \emph{crystalline} if \(I = (p)\), or
        \item \emph{transversal} if \((A, I)\) is orientable and \(p, d\) is a regular sequence on \(A\) for some orientation \(d\) of \(I\) (or satisfies some equivalent conditions such as \cite[Lemma 2.9]{ishizuka2026Prismatic}). The transversal property was originally introduced in \cite{anschutz2020Pcompleted}.
    \end{enumerate}
\end{definition}

We will use the following simple example of Zariskian prisms in one of our main theorem \Cref{ExplicitPerfectoidTowerDeltaRing}.

\begin{example} \label{ExampleZariskianPrism}
    \begin{enumerate}
        \item Set a \(\delta\)-structure on \(\setZ_{(p)}[T]\) by \(\delta(T) = 0\). Then the pair \(((1 + (T))^{-1}\setZ_{(p)}[T], (p-T))\) is an orientable bounded Zariskian prism.
        \item The \(q\)-crystalline prism \((\setZ_p[|q-1|], ([p]_q))\) (\cite[Example 1.3 (4)]{bhatt2022Prismsa}) is an orientable bounded prism. Here \(\setZ_p[|q-1|]\) is the \((p, q-1)\)-adic completion of the polynomial ring \(\setZ[q]\) with the \(\delta\)-structure \(\delta(q) = 0\) and \([p]_q \in \setZ[q]\) is the \(q\)-analogue of \(p\), i.e., \([p]_q \defeq (q^p-1)/(q-1) = 1 + q + \cdots + q^{p-1}\).
        \item Similarly, the pair \(((1 + (q-1))^{-1}\setZ_{(p)}[q], ([p]_q))\) is an orientable bounded Zariskian prism.
    \end{enumerate}
    By easy observation, \(p, p-T\) (resp., \(p, [p]_q\)) is a regular sequence in \(\setZ_{(p)}[T]\) (resp., \(\setZ_{(p)}[q]\)) and \(\setF_p[T]\) (resp., \(\setF_p[q]\)) is \(p\)-root closed in \(\setF_p[T][1/T]\) (resp., \(\setF_p[q][1/[p]_q]\) because of \([p]_q \equiv (q-1)^{p-1}\) modulo \(p\)).
\end{example}


Next, we fix some terminology of towers and those morphisms.

\begin{definition} \label{DefTowers}
    \begin{enumerate}
        \item A \emph{tower of rings} is a sequence of rings \(A_0 \xrightarrow{\iota_0} A_1 \xrightarrow{\iota_1} A_2 \xrightarrow{\iota_2} \cdots\) where these \(\iota_i\) are maps of rings. We often denote \((\{A_i\}_{i \geq 0}, \{\iota_i\}_{\geq 0})\) as \((\{A_i\}, \{\iota_i\})\) (more simply, \((\{A_i\})\)).
        \item A \emph{tower of \(\delta\)-rings} is a tower of rings \((\{A_i\}_{i \geq 0}, \{\iota_i\}_{\geq 0})\) where each \(A_i\) is a \(\delta\)-ring and \(\iota_i\) is a map of \(\delta\)-rings.
        \item A \emph{tower of preprisms} is a pair \((\{A_i\}_{i \geq 0}, \{\iota_i\}_{\geq 0}, I)\) where \((\{A_i\}_{i \geq 0}, \{\iota_i\}_{\geq 0})\) is a tower of \(\delta\)-rings and \((A_0, I)\) is a preprism. We often denote \((\{A_i\}_{i \geq 0}, \{\iota_i\}_{\geq 0}, (A_0, I))\) as \((\{A_i\}, I)\).
        \item A \emph{tower of prisms} is a tower of preprisms \((\{A_i\}, I)\) such that each \(A_i\) is a derived \((p, I)\)-complete \(\delta\)-\(A\)-algebra and the preprism \((A_0, I)\) is a prism.
    \end{enumerate}
\end{definition}

\begin{remark} \label{RemarkTowerPrism}
    For any tower of (pre)prisms \((\{A_i\}, I)\), the base change \(I_{A_i} \defeq I \otimes^L_{A_0} A_i \to A_i\) gives an \emph{animated} (pre)prism \((I_i \to A_i)\) over the discrete prism \((A_0, I)\) by \cite[Corollary 2.10]{bhatt2022Prismatization}.
    So the tower of (pre)prisms gives a tower consisting of \emph{animated} (pre)prisms over \((A_0, I)\) whose underlying \(\delta\)-rings are discrete.

    Note that even if \(A_i\) are all discrete \(\delta\)-rings, each \((I_{A_i} \to A_i)\) is only an animated (pre)prism (see \cite[Remark 2.8]{bhatt2022Prismatization}).
    In the transversal case, this becomes an honest (pre)prism by \Cref{TransversalTowers}.
    The image of \(I_{A_i} \to A_i\) is an ideal \(IA_i\) of \(A_i\) generated by the image of \(I \subseteq A_0\) in \(A_i\).
\end{remark}

\begin{definition} \label{DefMapsTowers}
    Let \((\{A_i\})\) and \((\{A'_i\})\) be towers of rings.
    \begin{enumerate}
        \item A sequence of maps \(\map{f = (\{f_i\}_{i \geq 0})}{(\{A_i\})}{(\{A'_i\})}\) is a \emph{map of towers of rings} if \(f_i \colon A_i \to A'_i\) is a map of rings and compatible with \(\iota_i\) and \(\iota'_i\) for each \(i \geq 0\).
        \item If \((\{A_i\})\) and \((\{A_i\})\) are towers of \(\delta\)-rings, a map of towers of rings \(\map{f}{(\{A_i\})}{(\{A'_i\})}\) is a \emph{map of towers of \(\delta\)-rings} if each \(f_i\) is a map of \(\delta\)-rings.
        \item A \emph{map of towers of (pre)prisms} \(\map{f}{(\{A_i\}, I)}{(\{A'_i\}, I')}\) is a map of towers of \(\delta\)-rings \(\map{f}{(\{A_i\})}{(\{A'_i\})}\) such that \(\map{f_0}{A_0}{A'_0}\) induces a map of (pre)prisms \((A_0, I) \to (A_0', I')\).
    \end{enumerate}
\end{definition}

\section{Construction of Towers}
\label{sec:construction_of_towers}

In this section, we construct towers of rings from a given preprism (\Cref{PartiallyPerfection}) and the Frobenius projection of the tower (\Cref{FrobeniusProjection}).

\subsection{Construction of Towers from Prisms}

In positive characteristic, the Frobenius map \(F\) on a reduced ring \(R\) induces a tower of rings \(R \xrightarrow{F} R \xrightarrow{F} R \xrightarrow{F} \cdots\) which is called \emph{perfect tower}, and this is a perfectoid tower arising from \((R, pR)\) (\cite[Definition 3.2 and Example 3.23 (3)]{ishiro2025Perfectoid}).
Similarly, we can construct a ``perfect tower'' from a given (pre)prism, which is the main subject in this subsection.

\begin{construction} \label{ConstructionIsom}
    Let \((A, I)\) be a preprism. We denote \(\map{\varphi}{A}{A}\) a Frobenius lift as usual.
    We have a tower of \(\delta\)-rings
    \begin{equation*}
        A_0 \xrightarrow{\varphi} A_1 \xrightarrow{\varphi} A_2 \xrightarrow{\varphi} \cdots \xrightarrow{\varphi} A_i \xrightarrow{\varphi} \cdots,
    \end{equation*}
    where \(A_i\) is the same as \(A\) and \(\varphi\) is the Frobenius lift on \(A\).
    The map of rings \(\map{\varphi}{A_i}{A_{i+1}}\) induces the following maps of rings:
    \begin{align*}
        \overline{\varphi}_I^{(i)} \defeq \varphi/\varphi^i(I) \colon A_i/\varphi^i(I)A_i \to A_{i+1}/\varphi^{i+1}(I)A_{i+1}, \\
        \overline{\varphi}_{(p,I)}^{(i)} \defeq \varphi/(p, I^{[p^i]}) \colon A_i/(p, I^{[p^i]})A_i \to A_{i+1}/(p, I^{[p^{i+1}]})A_{i+1},
    \end{align*}
    where \(\varphi^i(I)A_i\) is the ideal of \(A_i\) generated by the image \(\varphi^i(I) \subseteq A_i\) of \(I \subseteq A_i\).
    Note that \(\overline{\varphi}_{(p, I)}^{(i)}\) is the \(p\)-th power map \(A_i/(p, I^{[p^i]})A_i \xrightarrow{a \mapsto a^p} A_{i+1}/(p, I^{[p^{i+1}]})A_{i+1}\).
    Those maps give three towers of rings:
    \begin{align*}
        (\{A_i\}, \{\varphi\}) & = A_0 \xrightarrow{\varphi} A_1 \xrightarrow{\varphi} A_2 \xrightarrow{\varphi} \cdots, \\
        (\{A_i/\varphi^i(I)A_i\}, \{\overline{\varphi}^{(i)}_I\}) & = A_0/\varphi(I)A_0 \xrightarrow{\overline{\varphi}_I^{(0)}} A_1/\varphi(I)A_1 \xrightarrow{\overline{\varphi}_I^{(1)}} A_2/\varphi^2(I)A_2 \xrightarrow{\overline{\varphi}_I^{(2)}} \cdots, \\
        (\{A_i/(p, I^{[p^i]})\}, \{\overline{\varphi}_{(p, I)}^{(i)}\}) & = A_0/(p, I)A_0 \xrightarrow{\overline{\varphi}_{(p,I)}^{(0)}} A_1/(p, \varphi(I))A_1 \xrightarrow{\overline{\varphi}_{(p,I)}^{(1)}} A_2/(p, \varphi^2(I))A_2 \xrightarrow{\overline{\varphi}_{(p,I)}^{(2)}} \cdots.
    \end{align*}
    The first tower becomes a tower of (pre)prism \((\{A_i\}, \{\varphi\}, I)\).
\end{construction}

As mentioned in \Cref{RemarkTowerPrism}, in the transversal case, the tower of prisms becomes a tower consisting of discrete prisms.

\begin{proposition} \label{TransversalTowers}
    If a prism \((A, I)\) is transversal in the sense of \Cref{DefPrisms}, then the animated prism \((I \otimes^L_{A, \varphi^i} A_i \to A_i) = (I \xrightarrow{\varphi^i} A_i)\) is a (discrete) orientable transversal prism \((A_i, \varphi^i(I)A_i)\).
\end{proposition}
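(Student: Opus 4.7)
The plan is to reduce the proposition to the elementary Frobenius-lift identity \(\varphi^i(a) \equiv a^{p^i} \pmod{p}\). First, by orientability, fix an orientation \(d\) of \(I\), so \(I = dA\). Applying the identity to \(d\) gives \(\varphi^i(d) \equiv d^{p^i} \pmod{p}\); since \(d\) is a nonzerodivisor on \(A/pA\) by transversality, so is \(\varphi^i(d)\). Combined with \(p\) being a nonzerodivisor on \(A\), this yields the regular sequence \(p, \varphi^i(d)\) on \(A\), which is precisely the transversality of the candidate prism \((A_i, \varphi^i(d) A_i)\).

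The main obstacle will be upgrading ``nonzerodivisor modulo \(p\)'' to ``nonzerodivisor on \(A\)'' for \(\varphi^i(d)\) (and in particular for \(d\) itself, as the case \(i = 0\)). My approach is iteration: if \(a\varphi^i(d) = 0\), then reduction modulo \(p\) forces \(a \in pA\), and iterating (using that \(p\) is a nonzerodivisor on \(A\)) yields \(a \in \bigcap_n p^n A\). This intersection vanishes because a transversal prism is bounded and hence \(A\) is classically \((p, d)\)-complete, in particular \((p, d)\)-adically separated. Alternatively, one can cite the standard permutability of regular sequences in derived complete rings.

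With both nonzerodivisor statements in hand, the remaining assembly is routine. Since \(d\) is a nonzerodivisor, \(I = dA\) is a free \(A\)-module of rank one; flatness implies that the derived base change \(I \otimes^L_{A, \varphi^i} A_i\) coincides with the ordinary tensor product, which identifies with \(A_i\), the structure map to \(A_i\) being multiplication by \(\varphi^i(d)\) and hence injective. Its image is the invertible ideal \(\varphi^i(d) A_i = \varphi^i(I) A_i\). The remaining prism axioms for \((A_i, \varphi^i(d) A_i)\) are then clear: orientability is immediate; derived \((p, \varphi^i(d))\)-completeness of \(A_i = A\) follows from derived \((p, d)\)-completeness since \(\varphi^i(d) \equiv d^{p^i} \pmod{p}\) implies the two ideals have the same radical; and \(p \in (\varphi^i(d), \varphi^{i+1}(d))\) follows by applying the ring endomorphism \(\varphi^i\) to the original prism relation \(p \in (d, \varphi(d))\).
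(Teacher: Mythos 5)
Your proof is correct, and the key computation is the same as the paper's: show that \(p, \varphi^i(d)\) is a regular sequence by reducing the Frobenius lift mod \(p\) to the \(p^i\)-th power map, and in particular that \(\varphi^i(d)\) is a non-zero-divisor in \(A\). Where you differ is in how the prism axioms for \((A_i, \varphi^i(I)A_i)\) are then assembled. The paper's proof is essentially a citation: it invokes \cite[Lemma 2.13]{bhatt2022Prismatization} to say that an animated prism whose orientation is a non-zero-divisor is automatically a discrete prism, and then asserts (without spelling out) that the non-zero-divisor condition and the regular-sequence condition follow from transversality. You instead verify everything by hand: the iteration/separatedness argument to upgrade ``NZD mod \(p\)'' to ``NZD on \(A\)'' (which the paper skips), the identification of the derived tensor \(I \otimes^L_{A,\varphi^i} A_i\) with \(\varphi^i(d) A_i\) via freeness of \(I\), the stability of derived \((p,I)\)-completeness under passing to an ideal with the same radical, and the compatibility \(p \in (\varphi^i(d), \varphi^{i+1}(d))\) by applying \(\varphi^i\). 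Your route is longer but self-contained, and it fills in the two claims the paper leaves to the reader; the paper's route is shorter at the cost of an extra external citation. Both work. One small streamlining: to get \(\bigcap_n p^n A = 0\), you do not need to pass through boundedness and classical \((p,I)\)-completeness — since \(A\) is derived \(p\)-complete and \(p\)-torsion-free (from transversality), it is classically \(p\)-adically complete, hence separated.
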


\begin{proof}
    Fix an orientation \(d\) of \(I\).
    To prove the animated prism \((I \xrightarrow{\varphi^i} A_i)\) is a discrete prism, it suffices to show that \(\varphi^i(d)\) is a non-zero-divisor in \(A_i = A\) by \cite[Lemma 2.13]{bhatt2022Prismatization}, which follows from the transversal property of \((A, I)\).
    The transversal property of \((A_i, \varphi^i(I)A_i)\) also follows since \(p, \varphi^i(d)\) becomes a regular sequence in \(A\) for each \(i \geq 0\).
\end{proof}

Next, we show another representation of towers \((\{A_i\})\), \((\{A_i/\varphi^i(I)A_i\})\), and \((\{A_i/(p, I^{[p^i]})A_i\})\).
In concrete examples as in \Cref{sec:examples}, this representation is useful to understand the structure of the tower.

\begin{construction} \label{PartiallyPerfection}
    Let \((A, I)\) be a preprism.
    For each \(i \geq 0\), by \cite[Remark 2.7]{bhatt2022Prismsa} (or \cite[Proposition A.20 (1)]{bhatt2022Prismatization}), we can define a \(\delta\)-\(A\)-algebra \(A^{1/p^i}\) as the (finite) colimit in the category of \(\delta\)-rings
    \begin{equation*}
        A^{1/p^i} \defeq \colim \{A \xrightarrow{\varphi} A \xrightarrow{\varphi} \cdots \xrightarrow{\varphi} A\},
    \end{equation*}
    which is the colimit consisting of \((i+1)\)-copies of \(A\) with the Frobenius lift \(\varphi\).
    The underlying ring of \(A^{1/p^i}\) is the colimit of \(A\), that is, \(A^{1/p^i} \cong \colim \{A \xrightarrow{\varphi} A \xrightarrow{\varphi} \cdots \xrightarrow{\varphi} A\}\) as rings.
    In particular, if \(A\) is derived \((p, I)\)-complete, \(A^{1/p^i}\) is a derived \((p, I)\)-complete \(\delta\)-ring by \Cref{IsomorphicColimits} below.

    We denote the canonical map \(A \to A^{1/p^i}\) of \(\delta\)-rings from \(j\)-th term of the colimit as \(c_j^i\) for each \(0 \leq j \leq i\), namely, we have a map of \(\delta\)-rings
    \begin{equation*}
        c_j^i \colon A \to A^{1/p^i}.
    \end{equation*}
    In the following, \(A^{1/p^i}\) is a \(\delta\)-\(A\)-algebra via \(\map{c_0^i}{A}{A^{1/p^i}}\).
    Maps \(c_0^{i+1}, \dots, c_i^{i+1}\) of \(\delta\)-rings uniquely induce a map of \(\delta\)-\(A\)-algebras
    \begin{equation*}
        t_i \colon A^{1/p^i} \to A^{1/p^{i+1}}.
    \end{equation*}
    Set the ideal
    \begin{equation*}
        I_i \defeq c_0^i(I)A^{1/p^i} \subseteq A^{1/p^i},
    \end{equation*}
    then \(t_i\) induces a map of \(\delta\)-\(A\)-algebras
    \begin{align*}
        t_{i,I} \defeq t_i/I & \colon A^{1/p^i}/I_i \to A^{1/p^{i+1}}/I_{i+1}, \\
        t_{i,(p,I)} \defeq t_i/(p, I) & \colon A^{1/p^i}/(p, I_i) \to A^{1/p^{i+1}}/(p, I_{i+1}).
    \end{align*}
    Consequently, we have a tower of preprisms \((\{A^{1/p^i}\}, \{t_i\}, I)\) and two towers of rings \((\{A^{1/p^i}/I_i\}, \{t_{i,I}\})\) and \((\{A^{1/p^i}/(p, I_i)\}, \{t_{i,(p,I)}\})\).
    If \((A, I)\) is a prism, \((\{A^{1/p^i}\}, \{t_i\}, I)\) is a tower of prisms.

    By using commutativity of filtered colimits and tensor products, we have the following isomorphisms of \(A\)-algebras:
    \begin{align*}
        A^{1/p^i}/I_i & \cong \colim \{A/I \xrightarrow{\overline{\varphi}_I^{(0)}} A/\varphi(I)A \xrightarrow{\overline{\varphi}_I^{(1)}} \cdots \xrightarrow{\overline{\varphi}_I^{(i-1)}} A/\varphi^i(I)A\}, \\
        A^{1/p^i}/(p, I_i) & \cong \colim \{A/(p, I)A \xrightarrow{\overline{\varphi}_{(p,I)}^{(0)}} A/(p, \varphi(I))A \xrightarrow{\overline{\varphi}_{(p,I)}^{(1)}} \cdots \xrightarrow{\overline{\varphi}_{(p,I)}^{(i-1)}} A/(p, \varphi^i(I))A\} \\
        & \cong (A/pA)^{1/p^i}/I(A/pA)^{1/p^i}.
    \end{align*}

    As in \(\map{c_j^i}{A}{A^{1/p^i}}\), there exist the canonical maps of rings
    \begin{align*}
        c_{j, I}^i \colon A/\varphi^j(I)A \to A^{1/p^i}/I_i, \\
        c_{j, (p,I)}^i \colon A/(p, I^{[p^j]})A \to A^{1/p^i}/(p, I_i)
    \end{align*}
    for each \(i \geq 0\) and \(0 \leq j \leq i\).
\end{construction}

\begin{lemma} \label{IsomorphicColimits}
    Let \((A, I)\) be a preprism.
    Then the maps \(c_i^i\), \(c_{i, I}^i\) and \(c_{i,(p,I)}^i\) 
    \begin{align*}
        c_i^i & \colon A \xrightarrow{\cong} A^{1/p^i}, \\
        c_{i, I}^i & \colon A/\varphi^i(I)A \xrightarrow{\cong} A^{1/p^i}/I_i, \\
        c_{i,(p,I)}^i & \colon A/(p, I^{[p^i]})A \xrightarrow{\cong} A^{1/p^i}/(p, I_i)
    \end{align*}
    are isomorphisms of \(A\)-algebras, where the \(A\)-algebra structure on the left-hand sides are induced by the Frobenius lift \(\varphi\).
    In particular, if \(A\) is derived \((p, I)\)-complete, \(A^{1/p^i}\) (resp., \(A^{1/p^i}/I_i\)) is also derived \((p, I)\)-complete (resp., derived \(p\)-complete).
\end{lemma}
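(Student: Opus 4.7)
The key observation I would use is purely categorical: in any category with filtered colimits, a sequential diagram \(X_0 \to X_1 \to \cdots \to X_i\) has colimit \(X_i\) (its terminal vertex), with the canonical map from \(X_j\) being the composition of the intervening arrows. I would first apply this in the category of \(\delta\)-rings to \(A \xrightarrow{\varphi} A \xrightarrow{\varphi} \cdots \xrightarrow{\varphi} A\) with \(i+1\) copies, noting that filtered colimits of \(\delta\)-rings are computed on underlying rings (so no discrepancy arises between the ring and \(\delta\)-ring colimits). The resulting identification \(A^{1/p^i} = A\) makes \(c_j^i\) equal to \(\varphi^{i-j}\); in particular \(c_i^i = \mathrm{id}_A\) is trivially an isomorphism. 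Under this identification the \(A\)-algebra structure on \(A^{1/p^i}\) (via \(c_0^i = \varphi^i\)) matches the left-hand side \(A\)-algebra structure (induced by the Frobenius lift, namely \(\varphi^i\)), so the identity map is \(A\)-linear.

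For \(c_{i,I}^i\) and \(c_{i,(p,I)}^i\), I would invoke the colimit presentations recorded at the end of \Cref{PartiallyPerfection},
\begin{equation*}
    A^{1/p^i}/I_i \cong \colim\{A/IA \xrightarrow{\overline{\varphi}_I^{(0)}} \cdots \xrightarrow{\overline{\varphi}_I^{(i-1)}} A/\varphi^i(I)A\}
\end{equation*}
and its \((p,I)\)-analogue, and apply the same terminal-vertex principle to identify each colimit with its last term and the corresponding canonical map with the identity.

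For the completeness assertion, I transport along \(c_i^i\colon A \xrightarrow{\cong} A^{1/p^i}\): the ideal \(I_i = c_0^i(I)A^{1/p^i}\) corresponds to \(\varphi^i(I)A \subseteq A\). From \(\varphi(a) = a^p + p\delta(a)\) one obtains \(\varphi^i(I) \subseteq (p, I)\) and, by induction on \(i\), the congruence \(a^{p^i} \equiv \varphi^i(a) \pmod{p}\) for all \(a \in A\), yielding
\begin{equation*}
    \sqrt{(p, I)A} = \sqrt{(p, \varphi^i(I))A}.
\end{equation*}
Since derived completeness with respect to a finitely generated ideal depends only on its radical, derived \((p, I)\)-completeness transports from \(A\) to \(A^{1/p^i}\). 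Derived \(p\)-completeness of \(A^{1/p^i}/I_i\) then follows because \(I\) acts as zero on the quotient (so the derived \((p,I)\)-topology collapses to the derived \(p\)-topology), and the category of derived complete modules is stable under cokernels by finitely generated ideals.

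The closest thing to an obstacle is simply the bookkeeping of the \(A\)-algebra structures: each of the three left-hand sides must be viewed as an \(A\)-algebra via \(\varphi^i\) rather than via the identity, after which all three isomorphisms reduce to the formal statement that a chain has its terminal vertex as its colimit.
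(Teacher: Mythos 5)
Your proof is correct and follows the same route as the paper's (which merely says the isomorphisms ``follow from the definition of colimits'' and asserts the relevant completeness facts). You simply spell out the details the paper leaves implicit: the colimit of a chain over an index category with a terminal object is the terminal vertex (with canonical maps being compositions, so \(c_i^i = \mathrm{id}\)), the \(A\)-algebra structures on both sides are via \(\varphi^i\), and the completeness transfers via the radical identity \(\sqrt{(p,I)} = \sqrt{(p,\varphi^i(I))}\) together with radical-invariance of derived completeness for finitely generated ideals and stability of derived complete modules under quotients.
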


\begin{proof}
    Those isomorphisms follow from the definition of colimits.
    Since \(A\) (resp., \(A_i/\varphi^i(I)A_i\)) is derived \(\varphi^i(p, I)\)-complete (resp., derived \(p\)-complete), the completeness also follows.
\end{proof}

\begin{lemma} \label{IsomorphicTower}
    Let \((A, I)\) be a preprism.
    We can get isomorphisms of towers of rings between \Cref{ConstructionIsom} and \Cref{PartiallyPerfection} as follows.
    \begin{align*}
        \{c_i^i\} & \colon (\{A_i\},\{\varphi\}) \xrightarrow{\cong} (\{A^{1/p^i}\},\{t_i\}) \\
        \{c_{i, I}^i\} & \colon (\{A_i/\varphi^i(I)A_i\},\{\overline{\varphi}_I^{(i)}\}) \xrightarrow{\cong} (\{A^{1/p^i}/I_i\},\{t_{i, I}\}) \\
        \{c_{i,(p,I)}^i\} & \colon (\{A_i/(p, I^{[p^i]})A_i\},\{\overline{\varphi}_{(p,I)}^{(i)}\}) \xrightarrow{\cong} (\{A^{1/p^i}/(p, I_i)\},\{t_{i, (p, I)}\}).
    \end{align*}
    In particular, the first isomorphism \(\{c_i^i\}\) is an isomorphism of towers of preprisms between \((\{A_i\}, \{\varphi\}, I)\) and \((\{A^{1/p^i}\}, \{t_i\}, I)\).
\end{lemma}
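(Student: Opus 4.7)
The plan is to combine the pointwise isomorphisms given by \Cref{IsomorphicColimits} with a routine check that these bijections intertwine the transition maps on both sides; the real content is only the latter.

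First I would fix conventions for the colimit defining \(A^{1/p^i}\). By the cocone relation one has \(c_{j+1}^i \circ \varphi = c_j^i\) for \(0 \le j < i\), and by its defining universal property the map \(\map{t_i}{A^{1/p^i}}{A^{1/p^{i+1}}}\) is characterised by \(t_i \circ c_j^i = c_j^{i+1}\) for \(0 \le j \le i\). These two identities are the only facts I will use.

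Next, for the first claimed isomorphism of towers, I would verify the single commutative square
\[
t_i \circ c_i^i = c_{i+1}^{i+1} \circ \varphi.
\]
The left side equals \(c_i^{i+1}\) by the defining property of \(t_i\), and the right side equals \(c_i^{i+1}\) by the cocone relation applied inside \(A^{1/p^{i+1}}\); so the two agree. Since each \(c_i^i\) is a map of \(\delta\)-rings by construction and a bijection by \Cref{IsomorphicColimits}, this upgrades \(\{c_i^i\}\) to an isomorphism of towers of \(\delta\)-rings; compatibility with the preprism data is then automatic, because \(c_0^0 = \mathrm{id}_A\) so the preprism ideal \(I \subseteq A_0\) matches the preprism ideal \(I \subseteq A^{1/p^0}\) tautologically.

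Finally, the two remaining isomorphisms, involving the quotients by \(\varphi^i(I)A_i\) and by \((p, I^{[p^i]})A_i\), are obtained by reducing the square above modulo these ideals, using that \(c_i^i\) carries them onto \(I_i\) and \((p, I_i)\) respectively (already implicit in \Cref{IsomorphicColimits}, since the corresponding quotient maps there are bijections). I do not anticipate a substantive obstacle: the argument is a direct unpacking of the universal property of the colimit, and the only risk is keeping the indices \(j\), \(i\), and \(i+1\) straight.
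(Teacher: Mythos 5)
Your proof is correct and takes essentially the same approach as the paper, which disposes of the lemma in one line ("the maps $c_i^i$ are compatible with $t_i$ by those constructions"); you simply unpack that compatibility into the two defining identities $t_i \circ c_j^i = c_j^{i+1}$ and $c_{j+1}^i \circ \varphi = c_j^i$, verify the key square $t_i \circ c_i^i = c_{i+1}^{i+1} \circ \varphi$, and reduce mod the appropriate ideals. No gap.
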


\begin{proof}
    The first isomorphism is because the maps \(c_i^i\) are compatible with \(t_i\) by those constructions.
    The second and third isomorphisms follow from the same argument.
\end{proof}

By those isomorphisms, we have the following equivalences of injectivity of \(t_i\), \(t_{i, I}\), and \(t_{i, (p, I)}\).

\begin{corollary} \label{FrobeniusliftInjective}
    Let \((A, I)\) be a preprism and fix \(i \geq 0\).
    \begin{enumerate}
        \item The map of \(\delta\)-rings \(\map{t_i}{A^{1/p^i}}{A^{1/p^{i+1}}}\) is injective if and only if the Frobenius lift \(\map{\varphi}{A}{A}\) is injective. In this case, \(A\) is \(p\)-torsion free but the converse is not true in general (see \cite[Lemma 2.28]{bhatt2022Prismsa}).
        \item The map of rings \(\map{\overline{\varphi}_I^{(i)}}{A/\varphi^i(I)A}{A/\varphi^{i+1}(I)A}\) is injective if and only if the map \(\map{t_{i,I}}{A^{1/p^i}/I_i}{A^{1/p^{i+1}}/I_{i+1}}\) is injective.
        \item The \(p\)-th power map \(\map{\overline{\varphi}_{(p,I)}^{(i)}}{A/(p, I^{[p^i]})A}{A/(p, I^{[p^{i+1}]})A}\) is injective if and only if the map of rings \(\map{t_{i,(p,I)}}{A^{1/p^i}/(p, I_i)}{A^{1/p^{i+1}}/(p, I_{i+1})}\) is injective.
    \end{enumerate}
\end{corollary}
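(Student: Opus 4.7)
The approach is entirely formal: each of the three equivalences will follow from \Cref{IsomorphicTower} combined with the elementary fact that a ring map is injective if and only if conjugating it by isomorphisms on source and target yields an injective map.

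For part (1), the isomorphism of towers \(\{c_i^i\} \colon (\{A_i\}, \{\varphi\}) \xrightarrow{\cong} (\{A^{1/p^i}\}, \{t_i\})\) from \Cref{IsomorphicTower} yields, for each \(i \geq 0\), a commutative square whose horizontal arrows are \(\varphi \colon A \to A\) and \(t_i \colon A^{1/p^i} \to A^{1/p^{i+1}}\) and whose vertical arrows are \(c_i^i\) and \(c_{i+1}^{i+1}\). By \Cref{IsomorphicColimits} these two vertical arrows are isomorphisms, so \(\varphi\) is injective if and only if \(t_i\) is. The parenthetical assertion that Frobenius-injectivity forces \(A\) to be \(p\)-torsion free (but not conversely) is the content of \cite[Lemma 2.28]{bhatt2022Prismsa}: in any \(\delta\)-ring the \(p\)-power torsion is contained in \(\ker(\varphi)\), hence vanishes when \(\varphi\) is injective, while the cited example exhibits a \(p\)-torsion-free \(\delta\)-ring with non-injective Frobenius.

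Parts (2) and (3) are established by exactly the same diagram chase, using the isomorphisms \(\{c_{i,I}^i\}\) and \(\{c_{i,(p,I)}^i\}\) from \Cref{IsomorphicTower} in place of \(\{c_i^i\}\). In both cases the vertical arrows of the resulting squares are isomorphisms by \Cref{IsomorphicColimits}, and the equivalence of injectivity for the two horizontal maps follows immediately.

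There is no substantial obstacle: the corollary is a direct translation of \Cref{IsomorphicTower} into the language of injectivity of transition maps. The only content not purely diagrammatic is the Frobenius-injective \(\Rightarrow\) \(p\)-torsion-free implication, which is imported by citation from \cite[Lemma 2.28]{bhatt2022Prismsa}.
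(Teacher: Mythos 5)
Your proof is correct and takes essentially the same route as the paper, which states the corollary without a written proof immediately after the sentence ``By those isomorphisms \dots'' referring to \Cref{IsomorphicTower}: one simply conjugates each transition map by the vertical isomorphisms \(c_i^i\), \(c_{i,I}^i\), \(c_{i,(p,I)}^i\) and reads off the equivalence of injectivity. One minor imprecision in your parenthetical remark: from the \(\delta\)-ring axioms one obtains \(A[p] \subseteq \ker(\varphi)\) directly, and by induction \(A[p^n] \subseteq \ker(\varphi^n)\), but not in general the stronger containment \(A[p^\infty] \subseteq \ker(\varphi)\) that you assert; this still yields the implication that injectivity of \(\varphi\) forces \(p\)-torsion-freeness, and in any case you correctly defer the precise statement and the counterexample for the converse to \cite[Lemma 2.28]{bhatt2022Prismsa}, so the logic is unaffected.
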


The injectivity of \(\varphi\) on \(A\) holds under some assumptions as follows.

\begin{lemma} \label{DeltaInjective}
    Let \(A\) be a \(p\)-adically separated \(\delta\)-ring.
    If \(A\) is \(p\)-torsion-free and \(A/pA\) is reduced, then the Frobenius lift \(\map{\varphi}{A}{A}\) is injective.
\end{lemma}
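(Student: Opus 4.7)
The plan is to combine the identity $\varphi(a) = a^p + p\delta(a)$ with the reducedness of $A/pA$ to propagate any element of $\ker(\varphi)$ into arbitrarily high powers of $p$, and then invoke $p$-adic separation.

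More precisely, suppose $a \in A$ with $\varphi(a) = 0$. First I would rewrite the defining equation as $a^p = -p\,\delta(a)$, which immediately gives $a^p \in pA$. Reducing modulo $p$, the image $\bar a \in A/pA$ satisfies $\bar a^{p} = 0$; since $A/pA$ is reduced, this forces $\bar a = 0$, i.e., $a \in pA$.

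Next I would use that $\varphi$ is a \emph{ring} homomorphism. Because $\varphi(1) = 1$, we have $\varphi(p) = p$. So writing $a = pb$ with $b \in A$, we obtain
\begin{equation*}
    0 = \varphi(a) = \varphi(p)\varphi(b) = p\,\varphi(b),
\end{equation*}
and the $p$-torsion-freeness of $A$ yields $\varphi(b) = 0$. Iterating the argument with $b$ in place of $a$, I would get $b \in pA$, hence $a \in p^2 A$, and more generally by induction $a \in p^n A$ for every $n \geq 0$.

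Finally, $p$-adic separation of $A$ gives $\bigcap_{n \geq 0} p^n A = 0$, so $a = 0$. The only subtlety, which is not really an obstacle, is making sure that the ring-homomorphism identity $\varphi(p) = p$ is used to reduce the problem on $a = pb$ back to the same kind of problem on $b$; once this is noted, the induction on the $p$-adic valuation combined with separation closes the argument immediately.
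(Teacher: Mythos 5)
Your proof is correct and follows exactly the same route as the paper's: show $\ker(\varphi) \subseteq pA$ via reducedness of $A/pA$, peel off a factor of $p$ using $p$-torsion-freeness and $\varphi(px_1) = p\varphi(x_1)$, iterate, and conclude by $p$-adic separation. The only cosmetic difference is that you make the step $\varphi(p) = p$ explicit where the paper leaves it implicit.
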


\begin{proof}
    If \(\varphi(x) = 0\) for \(x \in A\), then \(\overline{x}^p = 0\) in the reduced ring \(A/pA\) and thus there exists \(x_1 \in A\) such that \(x = px_1\). Since \(A\) is \(p\)-torsion-free, the equation \(0 = \varphi(x) = p \varphi(x_1)\) implies \(\varphi(x_1) = 0\) in \(A\). Repeating this argument, \(x\) is contained in \(\cap_{n \geq 0} p^nA = 0\). This shows the injectivity of \(\varphi\).
\end{proof}

The injectivity of \(\map{\overline{\varphi}_I^{(i)}}{A/\varphi^i(I)A}{A/\varphi^{i+1}(I)A}\) in \Cref{FrobeniusliftInjective} (2) follows under assumptions that are also assumed in our main theorem (\Cref{MainTheoremPrism}).

\begin{lemma} \label{InjectivityTower}
    Let \((A, (d))\) be an orientable preprism such that \(p, d\) is a regular sequence on \(A\) and \(A/pA\) is \(p\)-root closed in \(A/pA[1/d]\). Fix \(i \geq 0\).
    If \(A/\varphi^{i+1}(I)A\) is \(p\)-adically separated,\footnote{If \(A\) is derived \(p\)-complete, which is satisfied when \(A\) is a prism, then \(A/\varphi^i(I)A\) is \(p\)-adically separated for any \(i \geq 0\). This is a consequence that \(A/\varphi^i(I)A\) is derived \(p\)-complete and \(p\)-torsion-free.} then the map of rings \(\map{\overline{\varphi}_I^{(i)}}{A/\varphi^i(I)A}{A/\varphi^{i+1}(I)A}\) is injective.
\end{lemma}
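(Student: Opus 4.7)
The plan is to translate $\varphi(x)\in\varphi^{i+1}(d)A$ into a mod-$p$ divisibility, use the $p$-root closedness hypothesis to extract one factor of $\varphi^i(d)$, and then iterate, so that the residual part of $x$ becomes arbitrarily $p$-divisible modulo $\varphi^i(d)A$. The $p$-adic separation hypothesis then closes the argument.

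First, I would fix $x\in A$ with $\varphi(x)=\varphi^{i+1}(d)y$ for some $y\in A$ and reduce modulo $p$. Since $\varphi\equiv F$ mod $p$ and $\varphi^{i+1}(d)\equiv \varphi^i(d)^p\equiv d^{p^{i+1}} \pmod p$, the relation reads $\bar x^{p}=\overline{\varphi^i(d)}^{\,p}\,\bar y$ in $A/pA$. The regular-sequence hypothesis makes $\bar d$, hence $\overline{\varphi^i(d)}=\bar d^{p^i}$, a non-zero-divisor on $A/pA$, so $A/pA\hookrightarrow A/pA[1/d]=A/pA[1/\overline{\varphi^i(d)}]$, and in this localization $(\bar x/\overline{\varphi^i(d)})^{p}=\bar y\in A/pA$. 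The $p$-root closedness assumption forces $\bar x/\overline{\varphi^i(d)}\in A/pA$, and thus $x=\varphi^i(d)x_1+px_2$ for some $x_1,x_2\in A$.

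Next I would bootstrap: applying $\varphi$ and using $\varphi(px_2)=p\varphi(x_2)$ gives
\[
p\,\varphi(x_2)=\varphi^{i+1}(d)\bigl(y-\varphi(x_1)\bigr).
\]
The transversality assumption propagates along $\varphi$: since $p,d$ is regular and $\varphi^{i+1}(d)\equiv d^{p^{i+1}}\pmod p$, the pair $p,\varphi^{i+1}(d)$ is again regular, so $p$ is a non-zero-divisor on $A/\varphi^{i+1}(d)A$. Hence $\varphi(x_2)\in\varphi^{i+1}(d)A$, i.e.\ $x_2$ satisfies the same hypothesis as $x$. Iterating this procedure yields, for every $n\geq 1$, a decomposition
\[
x=\varphi^i(d)z_n+p^n w_n,\qquad z_{n+1}-z_n\in p^nA,\qquad \varphi(w_n)\in\varphi^{i+1}(d)A.
\]

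Finally, reducing modulo $\varphi^i(d)A$ gives $\bar x\in\bigcap_{n\geq 0}p^{n}(A/\varphi^i(d)A)$, and the $p$-adic separation hypothesis implies this intersection is zero, so $x\in\varphi^i(d)A$. The hardest point is the descent from $A/pA[1/d]$ to $A/pA$ in Step~1, which is precisely where the $p$-root closedness is used in an essential way; the bootstrap and iteration are then formal consequences of the transversal structure of the prism. I expect that recording the flow of information between the mod-$p$ picture and the integral picture (and verifying that the bootstrap does not lose anything) is the only place where care is required; the final passage to the intersection $\bigcap_n p^n(A/\varphi^i(d)A)=0$ is the purpose of the separation hypothesis, which, as noted in the footnote, is automatic whenever $A$ is already a prism.
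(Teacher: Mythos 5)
Your proof is correct and follows essentially the same route as the paper's own argument: reduce mod $p$, use $p$-root closedness to extract one factor of $\varphi^i(d)$ and write $x=\varphi^i(d)x_1+px_2$, apply $\varphi$ and use that $p$ is a non-zero-divisor on $A/\varphi^{i+1}(d)A$ (from transversality) to bootstrap on the $p$-coefficient, iterate to get $x\in(p^n,\varphi^i(d))A$ for all $n$, and finish with $p$-adic separatedness. The only cosmetic difference is that you track the convergence of the $\varphi^i(d)$-coefficients $z_n$, which the paper omits but does not need; and like the paper you actually invoke the separatedness of $A/\varphi^i(I)A$ rather than of $A/\varphi^{i+1}(I)A$ as literally written in the hypothesis, but the accompanying footnote makes clear this holds for all $i$ under the intended assumptions, so the argument is sound.
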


\begin{proof}
    Since \(p, d\) is a regular sequence on \(A\), so is \(p, \varphi^{i+1}(d)\) and especially \(A/\varphi^{i+1}(d)A\) is \(p\)-torsion-free.
    Take an element \(a \in A\) such that \(\varphi(a) \in \varphi^{i+1}(d)A\). Taking modulo \(p\), this implies \(\overline{a}^p = \overline{d}^{p^{i+1}}A/pA\).
    The \(p\)-root closed assumption says that \(\overline{a} \in \overline{d}^{p^i}A/pA\) and there exist elements \(a_1, b_1 \in A\) such that \(a = pa_1 + \varphi^i(d)b_1 \in (p, \varphi^i(d))A\).
    Applying \(\varphi(-)\) to this equation, we have \(\varphi^{i+1}(d)A \ni \varphi(a) = p\varphi(a_1) + \varphi^{i+1}(d)\varphi(b_1)\) and thus \(\varphi(a_1) \in \varphi^{i+1}(d)A\) since \(A/\varphi^{i+1}(d)A\) is \(p\)-torsion-free.
    The same argument shows that there exist \(a_2, b_2 \in A\) such that \(a_1 = pa_2 + \varphi^i(d)b_2 \in (p, \varphi^i(d))A\) and thus \(a \in (p^2, \varphi^i(d))A\).
    Repeating this process, we have \(a \in (p^j, \varphi^i(d))A\) for all \(j \geq 0\) and thus \(\overline{a} \in \bigcap_{j \geq 0} p^jA/\varphi^i(d)A = 0\) by the \(p\)-adic separatedness of \(A/\varphi^i(d)A\). This shows the injectivity.
\end{proof}

    

\subsection{Construction of Frobenius Projections} \label{SectionConstFrobProj}

We next construct the ``Frobenius projection'' which plays a crucial role in the theory of perfectoid towers.
The existence of the Frobenius projection of a given tower is a key property in the theory of perfectoid towers (or \(p\)-purely inseparable tower as below).
The following observation and lemma (\Cref{FrobeniusProjectionFactor}) shows that the Frobenius projection of the tower \((\{A/\varphi^i(I)A\})\) generated from a prism \((A, I)\) is quite easily understood.

\begin{definition} \label{FrobeniusProjectionQuotient}
    Let \((A, I)\) be a preprism.
    Set the canonical surjection
    \begin{equation}
        \pi_i \colon A_{i+1}/(p, I^{[p^{i+1}]})A_{i+1} \twoheadrightarrow A_i/(p, I^{[p^i]})A_i
    \end{equation}
    induced from the identity map \(A_{i+1} \xrightarrow{\id} A_i\) of \(A = A_i = A_{i+1}\).
    We call the map \(\pi_i\) the \emph{\(i\)-th Frobenius projection} of the tower of prisms \((\{A_i\}, I)\) (or of the tower of rings \((\{A_i/\varphi^i(I)A_i\})\), see \Cref{PurelyInsepTowerPrism}).
\end{definition}

The following obvious lemma is a key observation in the theory of perfectoid towers.

\begin{lemma} \label{FrobeniusProjectionFactor}
    The Frobenius map \(F\) on \(A_{i+1}/(p, I^{[p^{i+1}]})A_{i+1}\) factors through \(\pi_i\) as follows:
    \begin{center}
        \begin{tikzcd}
            {A_{i+1}/(p, I^{[p^{i+1}]})A_{i+1}} \arrow[rr, "F"'] \arrow[rd, "\pi_i"', two heads] &                                                                            & {{A_{i+1}/(p, I^{[p^{i+1}]})A_{i+1}}.} \\
            & {A_i/(p, I^{[p^i]})A_i.} \arrow[ru, "{\overline{\varphi}_{(p, I)}^{(i)}}"'] &                                       
        \end{tikzcd}
    \end{center}
    Note that the map \(\map{\overline{\varphi}_{(p, I)}^{(i)}}{A_i/(p, I^{[p^i]})A_i}{A_{i+1}/(p, I^{[p^{i+1}]})A_{i+1}}\) is the \(p\)-th power map defined in \Cref{ConstructionIsom}.
\end{lemma}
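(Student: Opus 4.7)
The plan is to unwind the definitions and perform a one-line element chase. Both maps appearing in the proposed factorization are induced from maps on the same underlying ring \(A = A_i = A_{i+1}\), so the whole statement is really about how the Frobenius lift interacts with the pair of nested ideals \((p, I^{[p^{i+1}]}) \subseteq (p, I^{[p^i]})\); once this is made precise the factorization is immediate.

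First I would verify that \(\pi_i\) is well-defined. Since Frobenius powers of ideals satisfy \(I^{[p^{i+1}]} \subseteq I^{[p^i]}\), the identity map on \(A\) descends to a surjection \(A/(p, I^{[p^{i+1}]})A \twoheadrightarrow A/(p, I^{[p^i]})A\). (I would also remark that, modulo \(p\), one has \((p, \varphi^j(I))A = (p, I^{[p^j]})A\) because \(\varphi(x) \equiv x^p \pmod{p}\), so the notation is consistent with \Cref{ConstructionIsom}.) Similarly, \(\overline{\varphi}_{(p,I)}^{(i)}\) is well-defined because \(\varphi\bigl((p, I^{[p^i]})A\bigr) \subseteq (p, I^{[p^{i+1}]})A\): the generator \(p\) is sent into \(pA\), and if \(x\) is a generator of \(I\), then \(\varphi(x^{p^i}) = \varphi(x)^{p^i} \equiv x^{p^{i+1}} \pmod{p}\).

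Next I would trace an element \(a \in A\) around the triangle. Under \(\pi_i\), the class \([a] \in A/(p, I^{[p^{i+1}]})A\) goes to its image in \(A/(p, I^{[p^i]})A\); applying \(\overline{\varphi}_{(p,I)}^{(i)}\) then sends it to \(\varphi(a) + (p, I^{[p^{i+1}]})A\). Using \(\varphi(a) = a^p + p\delta(a)\) and reducing modulo \(p\), this equals \(a^p + (p, I^{[p^{i+1}]})A = F([a])\), which is exactly the Frobenius on \(A_{i+1}/(p, I^{[p^{i+1}]})A_{i+1}\).

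The hard part: there really isn't one — the statement is essentially a tautology, which is why the author calls it an ``obvious'' lemma. Its value lies entirely downstream, in providing the Frobenius projection required by the definition of a perfectoid tower.
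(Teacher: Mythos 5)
Your proposal is correct and matches the paper's approach — the paper gives no written proof at all, introducing the lemma with the phrase ``the following obvious lemma,'' and your element chase (using \(\varphi(a) = a^p + p\delta(a) \equiv a^p \pmod{p}\), and that \(\pi_i\) is the identity-induced surjection) is precisely the intended justification. You also correctly note the key background fact already flagged in \Cref{ConstructionIsom}, that \(\overline{\varphi}_{(p,I)}^{(i)}\) is the \(p\)-th power map.
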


The Frobenius projections of the tower of prisms \((\{A^{1/p^i}\})\) are also constructed as follows (\Cref{FrobeniusProjection}) and we record the compatibility under the isomorphisms \(\{c_i^i\}\) as in \Cref{IsomorphicTower} (\Cref{ObservationFrobeniusProj}).
However, this is a little bit complicated and thus the reader may skip to \Cref{PurelyInsepTower} and \Cref{PurelyInsepTowerPrism}.

\begin{construction} \label{FrobeniusProjection}
    Let \((A, I)\) be a preprism.
    Note that any ring \(A_i\) is \(A\) itself.
    Considering the following commutative diagram of rings;
    \begin{equation} \label{FrobeniusProjectionDiag}
        \begin{tikzcd}
            A_0 \arrow[d, "\varphi"] \arrow[r, "\varphi"]   & A_1 \arrow[d, "\varphi"] \arrow[r, "\varphi"]   & \cdots \arrow[r, "\varphi"] & A_i \arrow[d, "\varphi"] \arrow[r, "\varphi"] & A_{i+1} \arrow[ld, "\id_{A_{i+1}}"] \\
            A_0 \arrow[r, "\varphi"] \arrow[d, "\id_{A_0}"] & A_1 \arrow[r, "\varphi"] \arrow[d, "\id_{A_1}"] & \cdots \arrow[r, "\varphi"] & A_i \arrow[d, "\id_{A_i}"]                    &                             \\
            A_0 \arrow[r, "\varphi"]                                   & A_1 \arrow[r, "\varphi"]                                   & \cdots \arrow[r, "\varphi"]            & A_i \arrow[r, "\varphi"]                                 & A_{i+1}.                    
        \end{tikzcd}
    \end{equation}
    Taking the colimits for the horizontal directions and taking care of those \(A\)-algebra structures, the Frobenius lift \(\varphi\) on the \(\delta\)-ring \(A^{1/p^{i+1}}\) factors through \(t_i\) in the category of \(A\)-algebras as follows:
    \begin{center}
        \begin{tikzcd}
            A^{1/p^{i+1}} \arrow[rr, "\varphi"] \arrow[rd, "\varphi_i"'] \arrow[rd, "\cong"] &    & A^{1/p^{i+1}} \\
             & A^{1/p^i} \arrow[ru, "t_i"'] &              
        \end{tikzcd}
    \end{center}
    We call the isomorphism \(\varphi_i\) of \(A\)-algebras the \emph{\(i\)-th Frobenius lift projection} of the tower of prisms \((\{A^{1/p^i}\}, I)\).

    Taking the quotient in the category of rings (not of \(A\)-algebras), the Frobenius lift \(\varphi\) of \(A^{1/p^{i+1}}\) induces the following commutative diagrams of rings:
    \begin{center}
        \begin{tikzcd}
            {A^{1/p^{i+1}}/(p, I_{i+1})} \arrow[rr, "{\varphi/(p, I)}"] \arrow[rd, "{\varphi_i/(p, I)}"'] \arrow[rd, "\cong"] \arrow[rrr, "F"', bend left] &  & {A^{1/p^{i+1}}/(p, I_{i+1}^{[p]})} \arrow[r, two heads]            & {A^{1/p^{i+1}}/(p, I_{i+1})} \\
            & {A^{1/p^i}/(p, I_i^{[p]})} \arrow[ru, "{t_i/(p, I^{[p]})}"] \arrow[r, two heads] & {A^{1/p^i}/(p, I_i)}. \arrow[ru, "{t_{i, (p, I)}}"'] &                                     
        \end{tikzcd}
    \end{center}
    Here, by the diagram (\ref{FrobeniusProjectionDiag}), note that the left lower isomorphism \(\varphi_i/(p, I)\) is deduced from \(I_i = c^0_i(I)A^{1/p^i}\) and \(\varphi_i(I_{i+1})A^{1/p^i} = \varphi_i(c^0_{i+1}(I)A^{1/p^{i+1}})A^{1/p^i} = c^0_i(\varphi(I))A^{1/p^i} = \varphi(I_i)A^{1/p^i}\) in \(A^{1/p^i}\) by the following commutative diagram:
    \begin{center}
        \begin{tikzcd}
            A \arrow[d, "c_0^{i+1}"'] \arrow[r, "\varphi"]           & A \arrow[d, "c_0^i"] \\
            A^{1/p^{i+1}} \arrow[r, "\varphi_i"] \arrow[r, "\cong"'] & A^{1/p^i}.           
        \end{tikzcd}
    \end{center}
    The lower surjective map
    \begin{equation} \label{FrobeniusProjectionConst}
        \varphi_{i, (p, I)} \colon A^{1/p^{i+1}}/(p, I_{i+1}) \xrightarrow{\varphi_i/(p, I)} A^{1/p^i}/(p, I_i^{[p]}) \twoheadrightarrow A^{1/p^i}/(p, I_i)
    \end{equation}
    becomes a map of \(A\)-algebras, and we call it the \emph{\(i\)-th Frobenius projection} of the tower of prisms \((\{A^{1/p^i}\}, I)\) (or of the tower of rings \((\{A^{1/p^i}/I_i\}, \{t_{i, (p, I)}\})\), see \Cref{PurelyInsepTowerPrism}).
\end{construction}

\begin{observation} \label{ObservationFrobeniusProj}
    Let \((A, I)\) be a preprism.
    Through the isomorphism of towers of \(\delta\)-rings \(\{c_i^i\}\) (\Cref{IsomorphicTower}), we have a commutative diagram in the category of \(\delta\)-rings
    \begin{center}
        \begin{tikzcd}
            A^{1/p^{i+1}} \arrow[r, "\varphi_i"]                                      & A^{1/p^i}                                  \\
            A_{i+1} \arrow[r, "\id_A"] \arrow[u, "c_{i+1}^{i+1}"] \arrow[u, "\cong"'] & A_i. \arrow[u, "\cong"] \arrow[u, "c_i^i"']
        \end{tikzcd}
    \end{center}
    In particular, we can regard the Frobenius lift projection \(\map{\varphi_i}{A^{1/p^{i+1}}}{A^{1/p^i}}\) as the identity map \(\map{\id_A}{A_{i+1}}{A_i}\).
    Furthermore, the isomorphism of towers of rings \(\{c_{i, (p, I)}^i\}\) gives a commutative diagram of rings
    \begin{center}
        \begin{tikzcd}
            {A^{1/p^{i+1}}/(p, I_{i+1})} \arrow[r, "{\varphi_i/(p, I)}"] \arrow[rr, "{\varphi_{i, (p, I)}}", two heads, bend left] \arrow[r, "\cong"'] & {A^{1/p^i}/(p, I_i^{[p]})} \arrow[r, two heads]                      & {A^{1/p^i}/(p, I_i)}                                                                \\
            {A_{i+1}/(p, I^{[p^{i+1}]})A_{i+1}} \arrow[u, "{c_{i+1, (p, I)}^{i+1}}"] \arrow[r, "\id"] \arrow[u, "\cong"'] \arrow[r, "\cong"']          & {A_i/(p, I^{[p^{i+1}]})A_i} \arrow[r, two heads] \arrow[u, "\cong"'] & {{A_i/(p, I^{[p^i]})A_i},} \arrow[u, "{c_{i+1, (p, I)}^{i+1}}"] \arrow[u, "\cong"']
        \end{tikzcd}
    \end{center}
    where the middle vertical isomorphism is induced from \(\map{c_i^i}{A_i}{A^{1/p^i}}\).
    In particular, we can regard the \(i\)-th Frobenius projection \(\varphi_{i, (p, I)} \colon A^{1/p^{i+1}}/(p, I_{i+1}) \twoheadrightarrow A^{1/p^i}/(p, I_i)\) as the natural surjection of rings
    \begin{equation*}
        \pi_i \colon A_{i+1}/(p, I^{[p^{i+1}]})A_{i+1} = A/(p, I^{[p^{i+1}]})A \twoheadrightarrow A/(p, I^{[p^i]})A = A_i/(p, I^{[p^i]})A_i
    \end{equation*}
    defined in \Cref{FrobeniusProjectionQuotient}.
    The Frobenius maps \(F\) on \(A^{1/p^{i+1}}/(p, I_{i+1})\) and \(A_{i+1}/(p, I^{[p^{i+1}]})A_{i+1}\) factor through \(\varphi_{i, (p, I)}\) and \(\pi_i\) as follows:
    \begin{equation} \label{FrobeniusFactorization}
        \begin{tikzcd}
            {A^{1/p^{i+1}}/(p, I_{i+1})} \arrow[rr, "F"] \arrow[rd, "{\varphi_{i, (p, I)}}"', two heads]                                                   &                                                                                                         & {A^{1/p^{i+1}}/(p, I_{i+1})}                                                                    \\
            & {A^{1/p^i}/(p, I_i)} \arrow[ru, "{t_{i, (p, I)}}"']                                                    &                                                                                              \\
            & {A_i/(p, I^{[p^i]})A_i.} \arrow[rd, "\overline{\varphi}_{(p, I)}^{(i)}"] \arrow[u, "{c_{i, (p, I)}^i}"] \arrow[u, "\cong"'] &                                                                                              \\
            {A_{i+1}/(p, I^{[p^{i+1}]})A_{i+1}} \arrow[rr, "F"] \arrow[ru, "\pi_i", two heads] \arrow[uuu, "{c_{i+1, (p, I)}^{i+1}}"] \arrow[uuu, "\cong"'] &                                                                                                         & {A_{i+1}/(p, I^{[p^{i+1}]})A_{i+1}}. \arrow[uuu, "{c_{i+1, (p, I)}^{i+1}}"] \arrow[uuu, "\cong"']
        \end{tikzcd}
    \end{equation}

\end{observation}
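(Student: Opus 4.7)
The plan is a direct diagram chase using the universal property of filtered colimits. All three diagrams in the observation are consequences of the compatibility of the canonical maps $c_j^i$ with the Frobenius lift $\varphi$ and the construction of $\varphi_i$ from diagram \eqref{FrobeniusProjectionDiag}.

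First I would verify the top square. Restricted to the last term $A_{i+1}$ of the top row of \eqref{FrobeniusProjectionDiag}, the vertical arrow to the middle row is $\id_{A_{i+1}} \colon A_{i+1} \to A_i$. By the universal property of colimits applied to this restriction, $\varphi_i \circ c_{i+1}^{i+1} = c_i^i \circ \id_A$, which is the stated commutativity. Since both $c_{i+1}^{i+1}$ and $c_i^i$ are isomorphisms by \Cref{IsomorphicColimits}, this identifies $\varphi_i$ with $\id_A \colon A_{i+1} \to A_i$ under the two isomorphisms.

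Next I would descend to the quotients. The equality $\varphi_i(I_{i+1}) A^{1/p^i} = c_0^i(\varphi(I)) A^{1/p^i} = \varphi(I_i) A^{1/p^i}$ recorded in \Cref{FrobeniusProjection} yields $\varphi_i((p, I_{i+1})) \subseteq (p, I_i^{[p]})$, so $\varphi_i/(p,I)$ is well defined, and it is an isomorphism because $\varphi_i$ itself is. Under the canonical isomorphisms $c_{j,(p,I)}^j$ of \Cref{IsomorphicTower}, the ideal $I_i$ in $A^{1/p^i}$ corresponds to $\varphi^i(I)A$ in $A_i$ (since $c_i^i$ matches the top inclusion while $c_0^i$ matches the composition of $i$ Frobenius lifts), and $I_i^{[p]}$ corresponds to $\varphi^{i+1}(I)A$. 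Hence $\varphi_i/(p,I)$ corresponds to the identity $A_{i+1}/(p, I^{[p^{i+1}]})A_{i+1} \xrightarrow{\id} A_i/(p, I^{[p^{i+1}]})A_i$, and after the further projection onto $A^{1/p^i}/(p, I_i)$ (which corresponds to the natural surjection $A_i/(p, I^{[p^{i+1}]})A_i \twoheadrightarrow A_i/(p, I^{[p^i]})A_i$) the map $\varphi_{i,(p,I)}$ coincides with the surjection $\pi_i$ of \Cref{FrobeniusProjectionQuotient}.

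Finally, the Frobenius-factorization diagram \eqref{FrobeniusFactorization} is automatic: on $A^{1/p^{i+1}}/(p, I_{i+1})$ the Frobenius $F$ equals $\varphi/(p, I)$, which factors as $t_{i,(p,I)} \circ \varphi_{i,(p,I)}$ by the very construction of $\varphi_{i,(p,I)}$. Transporting this factorization through the vertical isomorphisms $c_{i+1,(p,I)}^{i+1}$ and $c_{i,(p,I)}^i$ yields $F = \overline{\varphi}_{(p,I)}^{(i)} \circ \pi_i$, which is exactly \Cref{FrobeniusProjectionFactor}. The main obstacle is purely notational bookkeeping: one must keep careful track that $c_0^i$ and $c_i^i$ correspond respectively to $\varphi^i$ and $\id_A$ under $A \cong A^{1/p^i}$, so that the ideals $I_i$ and $I_i^{[p]}$ are correctly identified with $\varphi^i(I)A$ and $\varphi^{i+1}(I)A$ in the diagrammatic translation.
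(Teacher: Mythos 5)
Your proposal is correct and follows the same route the paper implicitly takes: the observation is exactly the record of how Constructions \ref{PartiallyPerfection} and \ref{FrobeniusProjection} interact under the isomorphisms of \Cref{IsomorphicTower}, and your argument unpacks this by reading the commutativity off diagram (\ref{FrobeniusProjectionDiag}), tracking that $(c_i^i)^{-1}\circ c_0^i = \varphi^i$ so that $I_{i+1}$, $I_i^{[p]}$, and $I_i$ correspond to $\varphi^{i+1}(I)$, $\varphi^{i+1}(I)$ and $\varphi^i(I)$ respectively (all modulo $p$), and then transporting the factorization $\varphi/(p,I) = t_{i,(p,I)}\circ\varphi_{i,(p,I)}$ to $F = \overline{\varphi}_{(p,I)}^{(i)}\circ\pi_i$ as in \Cref{FrobeniusProjectionFactor}. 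No gap.
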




At the end of this section, we give the definition of \(p\)-purely inseparable towers and show that the tower of rings generated from prisms is a \(p\)-purely inseparable tower in many cases.

\begin{definition}[{\cite[Definition 3.4]{ishiro2025Perfectoid}}] \label{PurelyInsepTower}
    Let \(R\) be a ring and let \(I_0\) be an ideal of \(R\).
    A tower \((\{R_i\}, \{\iota_i\})\) of rings is called a \emph{\(p\)-purely inseparable tower arising from \((R, I_0)\)} if the following conditions hold.
    \begin{enumalphp}
        \item \(R_0 \cong R\) and \(p \in I_0\).
        \item The induced map \(\map{\overline{\iota_i}}{R_i/I_0R_i}{R_{i+1}/I_0R_{i+1}}\) from \(\iota_i\) is injective for all \(i \geq 0\).
        \item The image of the Frobenius map \(\map{F}{R_{i+1}/I_0R_{i+1}}{R_{i+1}/I_0R_{i+1}}\) is contained in the image of \(\overline{\iota_i}\) for all \(i \geq 0\).
    \end{enumalphp}
    Under these assumptions, the Frobenius map \(\map{F}{R_{i+1}/I_0R_{i+1}}{R_{i+1}/I_0R_{i+1}}\) factors through \(\overline{\iota_i}\) as follows:
    \begin{center}
        \begin{tikzcd}
            R_{i+1}/I_0R_{i+1} \arrow[r, "F"] \arrow[rd, "F_i"'] & R_{i+1}/I_0R_{i+1}               \\
                                                                 & R_i/I_0R_i. \arrow[u, "\overline{\iota_i}"']
        \end{tikzcd}
    \end{center}
    The map \(\map{F_i}{R_{i+1}/I_0R_{i+1}}{R_i/I_0R_i}\) is called the \emph{\(i\)-th Frobenius projection} of the tower \((\{R_i\}, \{\iota_i\})\).
\end{definition}

\begin{lemma} \label{PurelyInsepTowerPrism}
    Let \((A, I)\) be a preprism.
    Assume that the \(p\)-th power map \(\map{\overline{\varphi}_{(p, I)}^{(i)}}{A_i/(p, I^{[p^i]})A_i}{A_{i+1}/(p, I^{[p^{i+1}]})A_{i+1}}\) is injective for all \(i \geq 0\).\footnote{The necessity of this injectivity is a little bit subtle. See \Cref{RemAxiomB}.}
    Then the tower of rings \((\{A_i/\varphi^i(I)A_i\}, \{\overline{\varphi}_I^{(i)}\})\) is a \(p\)-purely inseparable tower arising from \((A/I, (p))\) and its Frobenius projection is nothing but the canonical surjection \(\pi_i \colon A_{i+1}/(p, I^{[p^{i+1}]})A_{i+1} \twoheadrightarrow A_i/(p, I^{[p^i]})A_i\) in \Cref{FrobeniusProjectionQuotient}.

    In particular, the tower of rings \(\{A^{1/p^i}/I_i, \{t_{i, I}\}\}\) is also a \(p\)-purely inseparable tower arising from \((A/I, (p))\) and its Frobenius projection is the Frobenius projection \(\map{\varphi_{i, (p, I)}}{A^{1/p^{i+1}}/(p, I_{i+1})}{A^{1/p^i}/(p, I_i)}\) constructed in \Cref{FrobeniusProjection}.
\end{lemma}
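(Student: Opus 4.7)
The plan is to verify the three axioms (a), (b), (c) of \Cref{PurelyInsepTower} for the tower \((\{A_i/\varphi^i(I)A_i\}, \{\overline{\varphi}_I^{(i)}\})\) with base data \((R, I_0) = (A/I, (p))\), and then to transport the conclusion to the tower \((\{A^{1/p^i}/I_i\}, \{t_{i,I}\})\) via the isomorphism of \Cref{IsomorphicTower}.

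The first order of business is the observation that in any \(\delta\)-ring one has \(\varphi(a) = a^p + p\delta(a) \equiv a^p \pmod{p}\), so \(\varphi^i(I)A + pA = I^{[p^i]}A + pA\) as ideals of \(A\) for every \(i \geq 0\). This identification yields \(R_i/I_0R_i = A_i/(p, \varphi^i(I))A_i = A_i/(p, I^{[p^i]})A_i\), and the reduction of the transition map \(\overline{\varphi}_I^{(i)}\) modulo \(p\) is literally the \(p\)-th power map \(\overline{\varphi}_{(p,I)}^{(i)}\) from \Cref{ConstructionIsom}.

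With this in hand, axiom (a) is immediate, axiom (b) is exactly the standing hypothesis, and for axiom (c) I would write the Frobenius on \(A_{i+1}/(p, I^{[p^{i+1}]})A_{i+1}\) as \(F(\overline{a}) = \overline{a}^p = \overline{\varphi(a)}\); since \(I^{[p^{i+1}]} \subseteq I^{[p^i]}\), the canonical quotient \(\pi_i\) of \Cref{FrobeniusProjectionQuotient} is well-defined, and a direct computation gives \(\overline{\varphi}_{(p,I)}^{(i)} \circ \pi_i(\overline{a}) = \overline{\varphi(a)} = F(\overline{a})\). This exhibits the required factorization; since \(\overline{\varphi}_{(p,I)}^{(i)}\) is injective, uniqueness forces the \(i\)-th Frobenius projection \(F_i\) of \Cref{PurelyInsepTower} to coincide with \(\pi_i\).

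For the second assertion, the isomorphism of towers of rings \(\{c_{i,I}^i\}\) from \Cref{IsomorphicTower} transfers the \(p\)-purely inseparable structure and all three axioms from the first tower to \((\{A^{1/p^i}/I_i\}, \{t_{i,I}\})\), and the identification of the transported \(\pi_i\) with \(\varphi_{i,(p,I)}\) is precisely the content of the commutative diagram \eqref{FrobeniusFactorization} recorded in \Cref{ObservationFrobeniusProj}. I do not anticipate any serious obstacle; the only subtleties are the mod-\(p\) congruence \(\varphi^i(I) \equiv I^{[p^i]}\) and the bookkeeping for the Frobenius projection under \(\{c_{i,I}^i\}\), both of which are essentially formal.
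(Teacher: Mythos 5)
Your proof is correct and follows essentially the same route as the paper: the paper's proof simply invokes the factorization $F = \overline{\varphi}_{(p,I)}^{(i)} \circ \pi_i$ recorded in \Cref{FrobeniusProjectionFactor} and the translation in \Cref{ObservationFrobeniusProj}, which is exactly the content you spell out when verifying axioms (a)--(c) and appealing to injectivity of $\overline{\varphi}_{(p,I)}^{(i)}$ to pin down $F_i = \pi_i$.
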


\begin{proof}
    This is clear by the construction of the Frobenius projection in \Cref{FrobeniusProjectionQuotient} (and \Cref{FrobeniusProjection}).
\end{proof}

\section{Construction of Perfectoid Towers from Prisms}
\label{sec:construction_perfectoid_towers}

This section is devoted to our first main result.
We show that the tower of rings \((\{A_i/\varphi^i(I)A_i\}) \cong (\{A^{1/p^i}/IA^{1/p^i}\})\) generated from prism \((A, I)\) becomes a perfectoid tower under mild assumptions and prove its properties (\Cref{PerfectoidTowerPrism}).
To do this, we need some lemmas.

\begin{lemma} \label{Perfectoidpillar}
    Let \((A, I)\) be an orientable preprism and fix an orientation \(d\) of \(I\).
    Set elements
    \begin{align*}
        f_0 \defeq \overline{\varphi(d)}^I \defeq \varphi(d) + I \in A/I, \\
        f_1 \defeq \overline{d}^{\varphi(I)} \defeq d + \varphi(I)A \in A/\varphi(I)A.
    \end{align*}
    If \((A, I)\) is a Zariskian prism (\Cref{DefPrisms}), then the following hold.
    \begin{enumerate}
        \item The ideal \((f_0) \subseteq A/I\) generated by \(f_0\) in \(A/I\) is the same as the ideal \((p) \subseteq A/I\).
        \item There exists a unit element \(u \in (A/\varphi(I)A)^\times\) such that \(f_1^p = u \cdot \overline{\varphi}_I^{(0)}(f_0) \in A/\varphi(I)A\). Here \(\overline{\varphi}_I^{(0)}(f_0) = \varphi(f_0) + \varphi(I)A\) in \(A/\varphi(I)A\) by the definition (\Cref{ConstructionIsom}).
    \end{enumerate}
\end{lemma}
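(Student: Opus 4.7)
The strategy is to reduce both statements to the classical computation $\varphi(d) = d^p + p\delta(d)$, combined with the key fact that in a Zariskian prism any orientation $d$ is a distinguished element, i.e., $\delta(d) \in A^{\times}$. This fact is \cite[Lemma 3.1]{bhatt2022Prismsa} in the derived complete case, and extends to Zariskian prisms using the same argument from \cite[Lemma 2.25]{bhatt2022Prismsa} after noting that the $(p,I)$-Zariskian property is enough to make $1 + (p,I) \subseteq A^{\times}$, which is all the completeness one uses in that lemma. With $\delta(d)$ established as a unit, everything else is a direct computation in $A/I$ and $A/\varphi(I)A$.

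For part (1), I would simply observe that $\varphi(d) = d^p + p\delta(d) \equiv p\delta(d) \pmod{I}$ since $d \in I$. Hence $f_0 = p\,\overline{\delta(d)}^I$ in $A/I$, and because $\delta(d)$ is a unit in $A$, its image $\overline{\delta(d)}^I$ is a unit in $A/I$. This gives the equality of ideals $(f_0) = (p)$.

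For part (2), the same relation, read modulo $\varphi(d)A$, yields $d^p = \varphi(d) - p\delta(d) \equiv -p\delta(d)$, so $f_1^p = -p\delta(d) + \varphi(I)A$ in $A/\varphi(I)A$. On the other side, by the definition of $\overline{\varphi}_I^{(0)}$ in \Cref{ConstructionIsom}, the image of $f_0 = \varphi(d) + I$ is $\varphi^2(d) + \varphi(I)A$. Applying $\varphi(x) = x^p + p\delta(x)$ once more to $\varphi(d)$ and using $\varphi \circ \delta = \delta \circ \varphi$ on the $\delta$-ring $A$, I get
\[
  \varphi^2(d) \;=\; \varphi(d)^p + p\,\varphi(\delta(d)) \;\equiv\; p\,\varphi(\delta(d)) \pmod{\varphi(d)A},
\]
so $\overline{\varphi}_I^{(0)}(f_0) = p\,\varphi(\delta(d)) + \varphi(I)A$.

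Finally, set $u \defeq -\delta(d)\cdot\varphi(\delta(d))^{-1} \in A$; this is a unit since $\delta(d) \in A^{\times}$ and ring maps send units to units, so $\varphi(\delta(d)) \in A^{\times}$ as well. Reducing $u$ modulo $\varphi(I)A$ produces a unit of $A/\varphi(I)A$, and a direct multiplication gives
\[
  u \cdot \overline{\varphi}_I^{(0)}(f_0) \;=\; -p\,\delta(d) + \varphi(I)A \;=\; f_1^p,
\]
which is the desired identity. The only genuinely non-formal step is invoking that $\delta(d)$ is a unit; apart from that the proof is purely a rewriting via the identity $\varphi(x) = x^p + p\delta(x)$, once at each of the two relevant levels.
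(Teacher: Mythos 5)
Your proof is correct and follows essentially the same route as the paper's: establish that the orientation \(d\) is distinguished using \cite[Lemma 2.25]{bhatt2022Prismsa} (the Zariskian hypothesis suffices), then carry out the two short computations in \(A/I\) and \(A/\varphi(I)A\) via \(\varphi(d) = d^p + p\delta(d)\), defining the unit \(u = \overline{-\delta(d)\varphi(\delta(d))^{-1}}\). The only negligible deviation is that you compute \(\overline{\varphi}_I^{(0)}(f_0)\) as \(\varphi^2(d)\) and then reduce modulo \(\varphi(d)\) (invoking \(\varphi\circ\delta = \delta\circ\varphi\)), whereas the paper first rewrites \(f_0 = p\delta(d)\) in \(A/I\) and applies \(\overline{\varphi}_I^{(0)}\) directly, avoiding that commutation step; the two computations are equivalent.
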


\begin{proof}
    Since \(A\) is \((p, I)\)-Zariskian and \(p\) belongs to \((d, \varphi(d))A\), \(d\) is a distinguished element of \(A\) by \cite[Lemma 2.25]{bhatt2022Prismsa}.

    (1): Passing the equation \(\varphi(d) = d^p + p \delta(d)\) to the quotient \(A/I\), we have \(f_0 = \overline{\varphi(d)}^I = \overline{p \delta(d)}^I\) in \(A/I\). Since \(d\) is a distinguished element, we are done.

    (2): We consider the following equations
    \begin{align*}
        f_1^p & = \overline{d^p}^{\varphi(I)} = \overline{\varphi(d) - p\delta(d)}^{\varphi(I)} = p \cdot \overline{-\delta(d)}^{\varphi(I)} \in A/\varphi(I)A \\
        \overline{\varphi}_I^{(0)}(f_0) & = \overline{\varphi}_I^{(0)}(\overline{\varphi(d)}^{I}) = \overline{\varphi}_I^{(0)}(\overline{p \delta(d)}^I) = p \cdot \overline{\varphi(\delta(d))}^{\varphi(I)} \in A/\varphi(I)A.
    \end{align*}
    Since \(\delta(d)\) is invertible in \(A\), we can take \(u \defeq \overline{-\delta(d) \varphi(\delta(d))^{-1}}^{\varphi(I)} \in (A/\varphi(I)A)^\times\) and we are done.
\end{proof}

\begin{lemma} \label{KernelFrobeniusProjection}
    Let \((A, I)\) be a preprism.
    For the Frobenius projection \(\map{\pi_i}{A_{i+1}/(p, I^{[p^{i+1}]})A_{i+1}}{A_i/(p, I^{[p^i]})A_i}\) defined in (\ref{FrobeniusProjectionQuotient}), we have the following.
    \begin{enumerate}
        \item The kernel of \(\pi_i\) is \(\ker(\pi_i) = I^{[p^{i}]}A_{i+1}/(p, I^{[p^{i+1}]})A_{i+1}\). Via the isomorphism \(c_{i+1}^{i+1}\) in \Cref{IsomorphicColimits}, we also have \(\ker(\varphi_{i, (p, I)}) = c_{1, (p, I)}^{i+1}(I)A^{1/p^{i+1}}/(p, I)A^{1/p^{i+1}}\).
        \item The induced isomorphism \((A^{1/p^{i+1}}/(p, I)A^{1/p^{i+1}})/\ker(\varphi_{i, (p, I)}) \cong A^{1/p^i}/(p, I)A^{1/p^i}\) from \(\varphi_{i, (p, I)}\) is equivalent to the identity map \(A_{i+1}/(p, I^{[p^{i+1}]}, I^{[p^i]})A_{i+1} \xrightarrow{\id} A_i/(p, I^{[p^i]})A_i\) via the isomorphism \(\{c_i^i\}\) in \Cref{IsomorphicTower}.
        \item If \((A, I)\) is orientable, then the kernel \(\ker(\pi_i) \subseteq A_{i+1}/(p, I^{[p^{i+1}]})A_{i+1}\) is generated by \(\overline{f_1}^{p^i} \in A_{i+1}/(p, I^{[p^{i+1}]})\), the image of \(f_1 \in A_1/\varphi(I)A_1\) via the composition \(A_1/\varphi(I)A_1 \twoheadrightarrow A_1/(p, I^{[p]})A_1 \xrightarrow{\overline{\varphi}_{(p, I)}^{(i)} \circ \cdots \circ \overline{\varphi}_{(p, I)}^{(1)}} A_{i+1}/(p, I^{[p^{i+1}]})A_{i+1}\).
        \item If \((A, I)\) is orientable and the \(p\)-th power map \(\map{\overline{\varphi}_{(p, I)}^{(i)}}{A_i/(p, I^{[p^i]})A_i}{A_{i+1}/(p, I^{[p^{i+1}]})A_{i+1}}\) is injective, then the kernel of the Frobenius map \(F\) on \(A_{i+1}/(p, I^{[p^{i+1}]})A_{i+1}\) is generated by \(\overline{f_1}^{p^i}\) as in (3).
    \end{enumerate}
\end{lemma}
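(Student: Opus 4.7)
The plan is to handle the four parts in order, with (1) and (3) doing the real work and (2) and (4) following immediately. For part (1), note that by definition $\pi_i$ is the canonical surjection induced by the identity map $A_{i+1} = A \xrightarrow{\id} A = A_i$, so it is the quotient by the ideal $(p, I^{[p^i]})/(p, I^{[p^{i+1}]})$ of $A_{i+1}/(p, I^{[p^{i+1}]}) A_{i+1}$; since $I^{[p^{i+1}]} \subseteq I^{[p^i]}$, this simplifies to $I^{[p^i]} A_{i+1}/(p, I^{[p^{i+1}]}) A_{i+1}$. To translate this into the kernel of $\varphi_{i,(p,I)}$, I would transport along the isomorphism $c_{i+1,(p,I)}^{i+1}$ from \Cref{ObservationFrobeniusProj}; combined with the identity $c_{1}^{i+1}(a) = c_{i+1}^{i+1}(\varphi^i(a))$ (obtained by iterating $c_j^i = c_{j+1}^i \circ \varphi$), the image of $I^{[p^i]} = \varphi^i(I) A$ in $A^{1/p^{i+1}}$ is exactly $c_{1,(p,I)}^{i+1}(I) A^{1/p^{i+1}}$.

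Part (2) is then tautological: quotienting $A_{i+1}/(p, I^{[p^{i+1}]})A_{i+1}$ by the kernel computed in (1) yields $A_{i+1}/(p, I^{[p^i]}, I^{[p^{i+1}]}) A_{i+1} = A_i/(p, I^{[p^i]}) A_i$, and the induced map from $\pi_i$ is manifestly the identity of $A/(p, I^{[p^i]})$. Transporting across the isomorphisms $\{c_i^i\}$ of \Cref{IsomorphicTower} gives the corresponding statement for $\varphi_{i,(p,I)}$.

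For (3) I fix an orientation $d$ of $I$. The key observation is that $\varphi^i(d) \equiv d^{p^i} \pmod{p}$, which follows by iterating the congruence $\varphi(a) \equiv a^p \pmod{p}$ implicit in $\varphi(a) = a^p + p\delta(a)$. Consequently $\varphi^i(d) A_{i+1} + pA_{i+1} = d^{p^i} A_{i+1} + pA_{i+1}$, so by (1) the kernel of $\pi_i$ is the principal ideal generated by $\overline{d^{p^i}}$. To identify this generator with $\overline{f_1}^{p^i}$, I use that each transition map $\overline{\varphi}_{(p,I)}^{(j)}$ is literally the $p$-th power map (by \Cref{ConstructionIsom}), so the composition $\overline{\varphi}_{(p,I)}^{(i)} \circ \cdots \circ \overline{\varphi}_{(p,I)}^{(1)}$ sends $f_1 = \overline{d}$ in $A_1/(p, \varphi(I))A_1$ to $\overline{d^{p^i}}$ in $A_{i+1}/(p, I^{[p^{i+1}]})A_{i+1}$, as required.

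Finally, for (4), \Cref{FrobeniusProjectionFactor} factors the Frobenius on $A_{i+1}/(p, I^{[p^{i+1}]}) A_{i+1}$ as $F = \overline{\varphi}_{(p,I)}^{(i)} \circ \pi_i$. Under the injectivity hypothesis on $\overline{\varphi}_{(p,I)}^{(i)}$, this forces $\ker(F) = \ker(\pi_i)$, and (3) then identifies the kernel as principally generated by $\overline{f_1}^{p^i}$. The main obstacle I anticipate is purely notational: one must carefully distinguish which copy of $A$ carries which $A$-algebra structure and keep track of the iterated compatibility $c_1^{i+1} = c_{i+1}^{i+1} \circ \varphi^i$ when transporting the description of $\ker(\pi_i)$ across the isomorphisms of \Cref{IsomorphicTower} to obtain the stated form $c_{1,(p,I)}^{i+1}(I)A^{1/p^{i+1}}/(p,I)A^{1/p^{i+1}}$.
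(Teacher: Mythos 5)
Your proposal is correct and follows essentially the same route as the paper: in (1) you read off $\ker(\pi_i)$ directly from the definition of $\pi_i$ as a quotient map and transport it across $c_{i+1,(p,I)}^{i+1}$ using $c_1^{i+1} = c_{i+1}^{i+1}\circ\varphi^i$; (2) follows immediately; (3) identifies the generator $\overline{d^{p^i}}$ (the image of $f_1$ under iterated $p$-th power maps); and (4) invokes \Cref{FrobeniusProjectionFactor} plus the injectivity of $\overline{\varphi}_{(p,I)}^{(i)}$. This matches the paper's argument throughout, with only minor stylistic differences (e.g.\ you make the congruence $\varphi^i(d)\equiv d^{p^i}\pmod p$ explicit, whereas the paper absorbs it into the notation $I^{[p^i]}$).
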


\begin{proof}
    (1): The kernel of \(\pi_i \colon A_{i+1}/(p, I^{[p^{i+1}]})A_{i+1} \twoheadrightarrow A_i/(p, I^{[p^i]})A_i\) is nothing but the ideal generated by \(I^{[p^i]}\) in \(A_{i+1}/(p, I^{[p^{i+1}]})A_{i+1}\).
    We have the commutative diagram
    \begin{equation} \label{TowerFactorization}
        \begin{tikzcd}
            {A^{1/p}/(p, I)A^{1/p}} \arrow[rrr, "{t_{i, (p, I)} \circ \cdots \circ t_{1, (p, I)}}"]                                                                                                                                 &  &  & {A^{1/p^{i+1}}/(p, I)A^{1/p^{i+1}}} \arrow[r, "{\varphi_{i, (p, I)}}", two heads]                                          & A^{1/p^i}/(p. I)A^{1/p^i}                                                  \\
            {A_1/(p, I^{[p]})A_1} \arrow[rrru, "{c_{1, (p, I)}^{i+1}}"] \arrow[rrr, "{\overline{\varphi}_{(p, I)}^{(i)} \circ \cdots \circ \overline{\varphi}_{(p, I)}^{(1)}}"'] \arrow[u, "{c_{1, (p, I)}^1}"] \arrow[u, "\cong"'] &  &  & {A_{i+1}/(p, I^{[p^{i+1}]})A_{i+1}} \arrow[u, "{c_{i+1, (p, I)}^{i+1}}"'] \arrow[u, "\cong"] \arrow[r, "\pi_i", two heads] & {A_i/(p, I^{[p^i]})A_i}. \arrow[u, "{c_{i, (p, I)}^i}"'] \arrow[u, "\cong"]
        \end{tikzcd}
    \end{equation}
    The image of \(IA_1/(p, I^{[p]})A\) under the lower horizontal map generates \(\ker(\pi_i) = I^{[p^i]} A_{i+1}/(p, I^{[p^{i+1}]})A_{i+1}\).
    So the kernel of the Frobenius projection \(\varphi_{i, (p, I)} \colon A^{1/p^{i+1}}/(p, I)A^{1/p^i} \twoheadrightarrow A^{1/p^i}/(p, I)A^{1/p^i}\) is
    \begin{equation*}
        c_{i+1, (p, I)}^{i+1}(I^{[p^i]}A_{i+1}/(p, I^{[p^{i+1}]})A_{i+1}) = c_{1, (p, I)}^{i+1}(I)A^{1/p^{i+1}}/(p, I)A^{1/p^{i+1}}.
    \end{equation*}

    (2): This is clear by the commutative diagram (\ref{FrobeniusFactorization}).

    (3): By (1) and our assumption, the kernel \(\ker(\pi_i)\) is generated by \(\overline{d}^{p^i} \in A_{i+1}/(p, I^{[p^{i+1}]})A_{i+1}\) where \(d\) is an orientation of \(I\).

    (4): By \Cref{FrobeniusProjectionFactor}, if \(\overline{\varphi}_{(p, I)}^{(i)}\) is injective, the kernel of the Frobenius map \(F\) on \(A_{i+1}/(p, I^{[p^{i+1}]})A_{i+1}\) is the same as \(\ker(\pi_i)\) and we are done by (3).
\end{proof}

Under these lemmas, we can show that the tower of rings generated from prisms becomes a perfectoid tower. Before that, we recall the definition of perfectoid towers and its related concepts.

\begin{definition}[{Perfectoid towers; \cite[Definition 3.21]{ishiro2025Perfectoid}}] \label{PerfectoidTower}
    Let \(R\) be a ring and let \(I_0\) be an ideal of \(R\).
    A tower \((\{R_i\}, \{\iota_i\})\) of rings is called a \emph{perfectoid tower arising from \((R, I_0)\)} if it is \(p\)-purely inseparable tower arising from \((R, I_0)\) (\Cref{PurelyInsepTower}) and satisfies the following conditions.
    \begin{enumerate}
        \item[(d)] The \(i\)-th Frobenius projection \(\map{F_i}{R_{i+1}/I_0R_{i+1}}{R_i/I_0R_i}\) is surjective for all \(i \geq 0\).
        \item[(e)] Each \(R_i\) is \(I_0\)-Zariskian.
        \item[(f)] The ideal \(I_0\) is a principal ideal in \(R\) and there exists a principal ideal \(I_1\) of \(R_1\) such that \(I_1^{[p]} = I_0R_1\) and the kernel \(\ker(F_i)\) of the \(i\)-th Frobenius projection is generated by the image of \(I_1\) via \(R_1 \twoheadrightarrow R_1/I_0R_1 \to R_{i+1}/I_0R_{i+1}\) for all \(i \geq 0\).
        \item[(g)] Any \(I_0\)-power-torsion element of \(R_i\) is \(I_0\)-torsion, that is, \(R_i[I_0^{\infty}] = R_i[I_0]\). Furthermore, there exists a bijective map \(\map{F_{i, \mathrm{tor}}}{R_{i+1}[I_0^{\infty}]}{R_i[I_0^{\infty}]}\) of sets such that the following diagram commutes:
        \begin{equation} \label{PerfectoidTowerDiagram}
            \begin{tikzcd}
                {R_{i+1}[I_0^{\infty}]} \arrow[r] \arrow[d, "{F_{i, \mathrm{tor}}}"'] & R_{i+1}/I_0R_{i+1} \arrow[d, "F_i"] \\
                {R_i[I_0^{\infty}]} \arrow[r]                                         & R_i/I_0R_i.                         
            \end{tikzcd}
        \end{equation}
    \end{enumerate}
    Such a principal ideal \(I_1\) is uniquely determined and is called the \emph{first perfectoid pillar} of this perfectoid tower (\cite[Definition 3.25]{ishiro2025Perfectoid}).
    Furthermore, we can take a sequence of principal ideal \(\{I_i \subseteq R_i\}_{i \geq 2}\) which satisfies \(F_i(I_{i+1} \cdot R_{i+1}/I_0R_{i+1}) = I_i \cdot R_i/I_0R_i\) for each \(i \geq 0\).
    Such a sequence of principal ideals \(\{I_i\}_{i \geq 2}\) is uniquely determined and \(I_i\) is called the \emph{\(i\)-th perfectoid pillar} of this perfectoid tower \cite[Definition 3.27]{ishiro2025Perfectoid}.
\end{definition}

\begin{definition}[{Tilts of Perfectoid towers; \cite[Definition 3.34]{ishiro2025Perfectoid}}] \label{TiltPerfectoidTower}
    Let \((\{R_i\}, \{\iota_i\})\) be a perfectoid tower arising from \((R, I_0)\).
    The \emph{\(i\)-th small tilt} \((R_i)^{s.\flat}_{I_0}\) (or simply \(R_i^{s.\flat}\)) of \((\{R_i\}, \{\iota_i\})\) associated to \((R, I_0)\) is the inverse limit
    \begin{equation*}
        R_i^{s.\flat} \defeq \lim_{k \geq 0} \{\cdots \xrightarrow{F_{i + k + 1}} R_{i+k+1}/I_0R_{i+k+1} \xrightarrow{F_{i+k}} R_{i+k}/I_0R_{i+k} \xrightarrow{F_{i+k-1}} \cdots \xrightarrow{F_i} R_i/I_0R_i\}
    \end{equation*}
    for each \(i \geq 0\).
    The transition map \(\map{\iota_i^{s.\flat}}{R_i^{s.\flat}}{R_{i+1}^{s.\flat}}\) is the inverse limit of the maps \(\map{\overline{\iota_{i+k}}}{R_{i+k}/I_0R_{i+k}}{R_{i+k+1}/I_0R_{i+k+1}}\) for \(k \geq 0\).
    The \emph{tilt} of the perfectoid tower \((\{R_i\}, \{\iota_i\})\) is the tower \((\{R_i^{s.\flat}\}, \{\iota_i^{s.\flat}\})\).
    The \emph{\(i\)-th small tilt} \(I_i^{s.\flat}\) of the perfectoid pillar \(I_i\) is the kernel
    \begin{equation*}
        I_i^{s.\flat} \defeq \ker(R_i^{s.\flat} \xrightarrow{\Phi_0^{(i)}} R_i/I_0R_i \twoheadrightarrow R_i/I_iR_i)
    \end{equation*}
    for each \(i \geq 0\), where \(\map{\Phi_0^{(i)}}{R_i^{s.\flat}}{R_i/I_0R_i}\) is the first projection.
    By \cite[Proposition 3.41]{ishiro2025Perfectoid}, the tilt \((\{R_i^{s.\flat}\}, \{\iota_i^{s.\flat}\})\) becomes a perfect(oid) tower arising from \((R_0^{s.\flat}, (I_0^{s.\flat}))\).
\end{definition}




Our goal is to construct a perfectoid tower from a class of prisms as follows.

\begin{theorem} \label{PerfectoidTowerPrism}
    Let \((A, I)\) be an orientable Zariskian prism with an orientation \(d \in I\).
    Assume that \(p, d\) is a regular sequence on \(A\),\footnote{Because of the lack of the derived \((p, I)\)-completeness of \(A\), we do not know whether the regular sequence \(p, d\) on \(A\) is permutable or not (see the proof of \cite[Lemma 2.9]{ishizuka2026Prismatic}). However, the two conditions in the statement do not depend on the choice of the orientation \(d\).} and \(A/pA\) is \(p\)-root closed in \(A/pA[1/d]\).
    Then the following assertions hold.
    \begin{enumarabicp}
        \item \((\{R_i\}, \{\iota_i\}) \defeq (\{A_i/\varphi^i(I)A_i\}, \{\overline{\varphi}^{(i)}_I\}) \cong (\{A^{1/p^i}/IA^{1/p^i}\}, \{t_{i, I}\})\) is a perfectoid tower arising from \((A/I, (p))\) whose terms \(A_i/\varphi^i(I)A_i\) are \(p\)-torsion-free. If \(A\) is derived \(p\)-complete or is Noetherian, then each transition map \(\overline{\varphi}^{(i)}_I\) and \(t_{i, I}\) is injective.
        \item Its tilt \((\{(A_i/\varphi^i(I)A_i)^{s.\flat}\}, \{(\overline{\varphi}_{I}^{(i)})^{s.\flat}\})\) is isomorphic to the perfect tower \((\{(A/pA)^{\wedge_d}\}, \{F\})\), where \((-)^{\wedge_d}\) is the \(d\)-adic completion.
        \item The \(p\)-adic completion \(\widehat{R_{\infty}}\) of the colimit \(R_{\infty} \defeq \colim_i R_i\) is isomorphic to the quotient of the perfection of the prism \((A, I)\), that is, isomorphic to the perfectoid ring \(A_{\infty}/IA_{\infty} \cong (A_{\perf}/IA_{\perf})^{\wedge_p}\), where \(A_{\infty}\) is the \((p, I)\)-adic completion of the colimit \(A_{\perf} \defeq \colim_{\varphi} A\).
        \item The first perfectoid pillar \(I_1\) of the tower is \(f_1 A_1/\varphi(I)A_1\), where \(f_1 = \overline{d}^{\varphi(I)} \in A_1/\varphi(I)A_1\) as in \Cref{Perfectoidpillar}.
        \item The \(i\)-th perfectoid pillar \(I_i\) of the tower is \(f_i A_i/\varphi^i(I)A_i\), where \(f_i \defeq \overline{d}^{\varphi^i(I)} = d + \varphi^i(I)A_i \in A_i/\varphi^i(I)A_i\) for each \(i \geq 2\).
        \item The small tilt \(I_i^{s.\flat} \subseteq (A_i/\varphi^i(I)A_i)^{s.\flat}\) of \(I_i\) is isomorphic to \((d) \subseteq A/pA\) for each \(i \geq 0\).
    \end{enumarabicp}
    Note that these statements also hold for any crystalline Zariskian prism \((A, (p))\).
\end{theorem}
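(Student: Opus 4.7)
The plan is to verify directly the seven axioms of \Cref{PerfectoidTower} for the tower \((\{R_i\}, \{\iota_i\}) \defeq (\{A/\varphi^i(I)A\}, \{\overline{\varphi}_I^{(i)}\})\) arising from \((A/I, (p))\), then read off assertions (2)--(6) from the explicit description. The isomorphism of \Cref{IsomorphicTower} silently transports everything to \((\{A^{1/p^i}/I_i\}, \{t_{i, I}\})\) when convenient. Fix throughout an orientation \(d\) of \(I\).

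For the \(p\)-purely inseparable part (\Cref{PurelyInsepTowerPrism}), condition (a) is immediate and condition (c) is the factorization of \Cref{FrobeniusProjectionFactor}; the only real content is condition (b), i.e.\ injectivity of the \(p\)-th power map \(\overline{\varphi}_{(p,I)}^{(i)} \colon A/(p, \varphi^i(I))A \to A/(p, \varphi^{i+1}(I))A\). Here I use the \(p\)-root closedness: if \(x^p \in (p, \varphi^{i+1}(I))A\), then reducing modulo \(p\) (so that \(\varphi^{j}(d) \equiv d^{p^{j}}\)) the element \(\overline{x}/d^{p^i} \in (A/pA)[1/d]\) has \(p\)-th power in \(A/pA\), hence lies in \(A/pA\) by hypothesis, so \(\overline{x} \in d^{p^i}(A/pA)\) and therefore \(x \in (p, \varphi^i(I))A\). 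Under this, the Frobenius projection \(F_i\) coincides with the canonical quotient \(\pi_i\) of \Cref{FrobeniusProjectionQuotient}.

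Among the remaining perfectoid axioms, (d) is automatic as \(\pi_i\) is a surjection; (e) follows because a \((p, I)\)-Zariskian ring stays Zariskian (in particular \(p\)-Zariskian) after any quotient; and (g) is vacuous since the regular sequence \(p, \varphi^i(d)\) forces each \(R_i\) to be \(p\)-torsion-free. For the pillar condition (f), I would take \(I_1 \defeq (f_1) \subseteq R_1\) with \(f_1 = \overline{d}^{\varphi(I)}\) as in \Cref{Perfectoidpillar}; the \(\delta\)-ring identity \(\varphi(d) = d^p + p\,\delta(d)\) with \(\delta(d)\) a unit yields both \((f_0) = (p)\) in \(R_0\) and \(f_1^p = u \cdot \overline{\varphi}_I^{(0)}(f_0)\) for some unit \(u \in R_1^\times\), so \(I_1^{[p]} = (p)R_1\). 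The kernel \(\ker F_i = \ker \pi_i\) is identified in \Cref{KernelFrobeniusProjection}(3) with the ideal generated by \(\overline{f_1}^{p^i}\), which is exactly the image of \(I_1\) under \(R_1 \twoheadrightarrow R_1/pR_1 \to R_{i+1}/pR_{i+1}\) (the intermediate maps being \(p\)-th powers). Finally, injectivity of the integral transitions \(\overline{\varphi}_I^{(i)}\) (equivalently of the \(t_{i,I}\), by \Cref{FrobeniusliftInjective}) reduces via \Cref{InjectivityTower} to \(p\)-adic separatedness of \(R_{i+1}\), which holds in the derived \(p\)-complete case (then \(R_{i+1}\) is \(p\)-torsion-free derived \(p\)-complete, hence classically \(p\)-adically complete) and in the Noetherian case (by Krull's intersection theorem, using \(p\)-Zariskianness).

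For (2)--(6) I just unwind the definitions. One computes \(R_i^{s.\flat} = \varprojlim_k A/(p, d^{p^{i+k}})A = (A/pA)^{\wedge_d}\), and the transition \((\overline{\varphi}_I^{(i)})^{s.\flat}\) is a limit of \(p\)-th power maps, hence the Frobenius \(F\), giving (2). Assertion (3) follows by commuting filtered colimits with quotients and (finitely generated) \(p\)-adic completion to identify \(\widehat{R_\infty} \cong (A_{\perf}/IA_{\perf})^{\wedge_p} \cong A_\infty/IA_\infty\). The pillars in (4)--(5) are read off directly from (f): \(I_i = (d + \varphi^i(I))R_i\), and the required compatibility \(F_i(I_{i+1} \bmod p) = I_i \bmod p\) holds because \(\pi_i\) is induced by \(\id_A\). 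For (6), the composite \(R_i^{s.\flat} \twoheadrightarrow R_i/I_i R_i\) collapses to \((A/pA)^{\wedge_d} \to (A/pA)/(d)\) (since \(\varphi^i(d) \equiv d^{p^i}\) is absorbed into \((d)\) modulo \(p\)), whose kernel is the principal ideal generated by \(d\), naturally identified with \((d) \subseteq A/pA\). The crystalline case \(I = (p)\) is formally covered by the same arguments once one observes \(R_i = A/pA\) for all \(i\), reducing the whole picture to the perfect tower on \(A/pA\). The main obstacle I anticipate is juggling the two avatars of the tower (\Cref{ConstructionIsom} versus \Cref{PartiallyPerfection}) and tracking the \(p\)-th power maps along it without conflating \(\varphi^i(d)\) with its mod-\(p\) reduction \(d^{p^i}\) in integral (non-\(p\)-reduced) statements.
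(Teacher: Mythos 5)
Your proof proposal is correct and takes essentially the same approach as the paper: both verify the axioms (a)--(g) of \Cref{PurelyInsepTower} and \Cref{PerfectoidTower} one by one, using \Cref{Perfectoidpillar}, \Cref{KernelFrobeniusProjection}, \Cref{FrobeniusProjectionFactor}, and \Cref{InjectivityTower}, and then read off assertions (2)--(6) from the explicit description of the tower. The only micro-difference is that the paper phrases the axiom (b) step as an equivalence with \(p\)-root closedness rather than spelling out the one direction needed, and it emphasizes explicitly that \(d\) being a non-zero-divisor on \(A/pA\) is used for the embedding \(A/pA \hookrightarrow A/pA[1/d]\) — a point you use implicitly but do not highlight.
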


\begin{proof}
    In (1), if \(A\) is derived \(p\)-complete or Noetherian, then \(A/\varphi^i(I)A\) is \(p\)-adically separated for any \(i \geq 0\) under our assumption and then the injectivity of \(\overline{\varphi}^{(i)}_I\) follows from \Cref{InjectivityTower}.
    We check the axiom from (a) to (g) of \(p\)-purely inseparable towers (\Cref{PurelyInsepTower}) and perfectoid towers (\Cref{PerfectoidTower}).
    
    (a): This is clear.

    (b): We must show that the \(p\)-th power map \(\overline{\varphi}^{(i)}_{(p, I)}\)
    \begin{equation*}
        A_i/(p, I^{[p^i]})A_i \cong (A/pA)/d^{p^i}(A/pA) \xrightarrow{a \mapsto a^p} (A/pA)/d^{p^{i+1}}(A/pA) \cong A_{i+1}/(p, I^{[p^{i+1}]})A_{i+1}
    \end{equation*}
    is injective for all \(i \geq 0\).
    This condition is equivalent to the condition that \(A/pA\) is \(p\)-root closed in \(A/pA[1/d]\) which we assume now.
    Here we use the assumption that \(d\) is a regular element of \(A/pA\) because we need the injection \(A/pA \hookrightarrow A/pA[1/d]\).

    (c): This is already proved in \Cref{PurelyInsepTowerPrism}.

    (d): The surjectivity of the Frobenius projection \(\map{\pi_i}{A_{i+1}/(p, I^{[p^{i+1}]})A_{i+1}}{A_i/(p, I^{[p^i]})A_i}\) is clear.

    (e): This follows from the \((p, I^{[p^i]})\)-Zariskian property of \(A_i\).

    (f): The principality of \((p)\) in \(A/I\) is clear. By \Cref{Perfectoidpillar} (1), the principal ideal \((p)\) is the same as the principal ideal \((f_0)\) in \(A/I\).
    We set \(I_1 \defeq f_1 A/\varphi(I)A \subseteq A_1/\varphi(I)A_1\) where \(f_1 = \overline{d}^{\varphi(I)} \in A_1/\varphi(I)A_1\) is defined in \Cref{Perfectoidpillar}.
    By \Cref{Perfectoidpillar} (2), we have \(I_1^p = pA_1/\varphi(I)A_1\).
    By \Cref{KernelFrobeniusProjection} (3), the kernel \(\ker(\pi_i)\) of the Frobenius projection \(\pi_i\) is generated by the image of \(I_1\) via \(A_1/\varphi(I)A_1 \twoheadrightarrow A_1/(p, I^{[p]})A_1 \xrightarrow{\overline{\varphi}_{(p, I)}^{(i)} \circ \cdots \circ \overline{\varphi}_{(p, I)}^{(1)}} A_i/(p, I^{[p^{i}]})A_i\).

    (g): Since \(p, d\) is regular on \(A\), so is \(p, \varphi^i(d)\) for each \(i \geq 0\).
    This shows that \(p\) is a non-zero-divisor of \(A_i/\varphi^i(d)A_i\) by a simple calculation.
    Thus, \((A_i/\varphi^i(d)A_i)[p^\infty]\) is zero and the condition (g) is clear.
    This shows (1).

    We compute the tilt of the perfectoid tower \((\{A_i/\varphi^i(I)A_i\}, \{\overline{\varphi}_{I}^{(i)}\})\) arising from \((A/I, (p))\).
    By \Cref{PurelyInsepTowerPrism}, the \(i\)-th small tilt is
    \begin{align*}
        (A_i/\varphi^i(I)A_i)^{s.\flat} & \cong \lim \{\cdots \xrightarrow{\pi_{i+2}} A_{i+2}/(p, I^{[p^{i+2}]})A_{i+2} \xrightarrow{\pi_{i+1}} A_{i+1}/(p, I^{[p^{i+1}]})A_{i+1} \xrightarrow{\pi_i} A_i/(p, I^{[p^i]})A_i\} \\
        & \cong (A/pA)^{\wedge_d},
    \end{align*}
    where the symbol \((-)^{\wedge_d}\) is the \(d\)-adic completion.
    The transition map \(\map{(t_{i, I})^{s.\flat}}{(A/pA)^{\wedge_d}}{(A/pA)^{\wedge_d}}\) is induced from the inverse limit of \(p\)-th power maps \(\overline{\varphi}^{(i)}_{(p, I)}\).
    Then the transition map \((t_{i, I})^{s.\flat}\) is nothing but the Frobenius map \(F\) on \((A/pA)^{\wedge_d}\) and we show (2).

    The colimit \(R_{\infty} \defeq \colim_i R_i\) is isomorphic to \(\colim_{\varphi} (A_i/\varphi^i(I)A_i) \cong A_{\perf}/IA_{\perf}\), where \(A_{\perf} \defeq \colim_{\varphi} A\) as in \cite[Lemma 3.9]{bhatt2022Prismsa}.
    The desired consequence (3) is obtained because \((A_{\perf}/IA_{\perf})^{\wedge_p}\) and \(A_{\infty}/IA_{\infty}\) are isomorphic.

    By the above proof of (1), the first perfectoid pillar \(I_1\) is \(f_1 A_1/\varphi(I)A_1\) where \(f_1 = \overline{d}^{\varphi(I)} \in A_1/\varphi(I)A_1\) as in \Cref{Perfectoidpillar}.
    The \(i\)-th perfectoid pillar \(I_i \subset A_i/\varphi^i(I)A_i\) is \(f_i A_i/\varphi^i(I)A_i\) where \(f_i = \overline{d}^{\varphi^i(I)} = d + \varphi^i(I)A_i \in A_i/\varphi^i(I)A_i\) for each \(i \geq 2\) by the definition of the Frobenius projection \(\pi_i\). This shows (4) and (5).

    The small tilt \(I_i^{s.\flat} \subseteq (A_i/\varphi^i(I)A_i)^{s.\flat}\) of the \(i\)-th perfectoid pillar \(I_i\) is the kernel of the horizontal map of
    \begin{center}
        \begin{tikzcd}
            (A_i/\varphi^i(I)A_i)^{s.\flat} \arrow[d, "\cong"'] \arrow[r, "\Phi_0^{(i)}", two heads] & {A_i/(p, I^{[p^i]})A_i} \arrow[d, Rightarrow, no head] \arrow[r, two heads] & {A_i/(I_i, I^{[p^i]})A_i} \arrow[d, Rightarrow, no head] \\
            (A/pA)^{\wedge_d} \arrow[r, two heads]                                                   & {A/(p, d^{[p^i]})A} \arrow[r, two heads]                                    & {{A/(p, d)A},}                                          
        \end{tikzcd}
    \end{center}
    where the left two horizontal maps are the first projection and the right two horizontal maps are the canonical surjection.
    This shows (6): \(I_i^{s.\flat} \cong (d) \subseteq A/pA\) for any \(i \geq 0\).
\end{proof}

\begin{remark} \label{RemAxiomB}
    In the definition of \(p\)-purely inseparable towers (and of course perfectoid towers), the injectivity axiom (b) in \Cref{PurelyInsepTower} is used to ensure that the tower has the Frobenius projection.
    However, in our case, the tower of rings \((\{A_i/\varphi^i(I)A_i\}, \{\overline{\varphi}_I^{(i)}\})\) has the Frobenius projection \(\pi_i\) without the assumption of the injectivity of the \(p\)-th power map (\Cref{FrobeniusProjectionFactor}).
    Furthermore, all the axioms of perfectoid towers without the axiom (b) are satisfied by the tower \((\{A_i/\varphi^i(I)A_i\}, \{\overline{\varphi}_I^{(i)}\})\) for \emph{any} orientable Zariskian prism \((A, I)\) with an orientation \(d \in I\) such that \(p, d\) is a regular sequence on \(A\).
    By the proof of \Cref{PerfectoidTowerPrism}, the tower \((\{A_i/\varphi^i(I)A_i\}, \{\overline{\varphi}_I^{(i)}\})\) is a perfectoid tower if and only if \(A/pA\) is \(p\)-root closed in \(A/pA[1/d]\).
    Although the axiom (b) is necessary to give a good theory of perfectoid towers, if the existence of Frobenius projection is merely necessary, our construction covers it.
\end{remark}

Based on the above theorem, we come up with the following question which is not clear yet.

\begin{question}
    For a given perfectoid tower \((\{R_i\}, \{t_i\})\), can we construct a prism \((A, I)\) such that the perfectoid tower \((\{A_i/\varphi^i(I)A_i\}, \{\overline{\varphi}^{(i)}_I\})\) is isomorphic to \((\{R_i\}, \{t_i\})\)?
    More optimistically, can we construct a one-to-one correspondence between the set of (\(p\)-torsion-free) perfectoid towers and the set of (specific) prisms by using the above construction?
\end{question}



\section{Perfectoid Towers from \texorpdfstring{\(\delta\)}{delta}-rings}
\label{sec:perfectoid_towers_from_delta_rings}

In this section, for any \(\delta\)-ring \(R\), we make two perfectoid towers \(\{R_i\}_{i \geq 0}\) satisfying \(R_0 \cong R\) or \(R_0 \cong R[\zeta_p]\), where \(\zeta_p\) is a \(p\)-th root of unity.
The one is obtained by adjoining \(p\)-power roots of \(p\) to \(R\) and the other is obtained by adjoining \(p\)-power roots of unity.
Recall that \(R^{1/p^i} \defeq \colim\{R \xrightarrow{\varphi_R} R \xrightarrow{\varphi_R} \cdots \xrightarrow{\varphi_R} R\}\) is the colimit consisting of \((i+1)\)-terms where \(\varphi_R \colon R \to R\) is the Frobenius lift of \(R\).
First we prove a more general result (\Cref{BaseChangePerfectoidTower}) which says that the base change of the tower \(R \to R^{1/p} \to R^{1/p^2} \to \cdots\) along a perfectoid tower induced from a prism is a perfectoid tower.

\subsection{Base change by perfectoid towers}

\begin{notation} \label{ConstTensorProduct}
    In this subsection, we follow the notation below.
    \begin{itemize}
        \item Let \(V\) be an absolute unramified discrete valuation \(\delta\)-ring of mixed characteristic \((0, p)\).
        \item Let \((A, I)\) be an orientable bounded Zariskian prism with an orientation \(d \in I\).
        \item Let \(R\) be a \(p\)-torsion-free \(p\)-Zariskian \(\delta\)-ring.
        \item Write the Frobenius lift on \(A\) by \(\varphi\) and the Frobenius lift on \(R\) by \(\varphi_R\).
        \item Assume that both \(A\) and \(R\) are \(p\)-torsion-free \(\delta\)-\(V\)-algebra.
    \end{itemize}

    In this case, We can identify \(R \otimes_V I\) and \(I (R \otimes_V A)\) as ideals of \(R \otimes_V A\) and the tensor product \(R \otimes_V A\) is flat over \(R\) and \(A\). In particular, it is \(p\)-torsion-free.
    The tensor product \(R \otimes_V A\) and its \((p, I)\)-Zariskization\footnote{More precisely, this is the localization of \(R \otimes_V A\) by \(1 + J\) where \(J \subseteq R \otimes_V A\) is the ideal of \(R \otimes_V A\) generated by \(p\) and \(I\). This localization does not necessarily coincide with the localization of \(R \otimes_V A\) by the image of \(1 + (p, I)A\) in \(A\) through \(A \to R \otimes_V A\).} \((1 + (p, I))^{-1}(R \otimes_V A)\) inherit the unique \(\delta\)-structure compatible with the maps from \(R\) and \(A\). Here we implicitly use \(\varphi((p, I)) = (p, d^p) \subseteq (p, I)\) in \(R \otimes_V A\) and \cite[Lemma 2.15]{bhatt2022Prismsa}.
    We take the flat \(\delta\)-\((R, A)\)-algebra
    \begin{equation} \label{PrismBaseChange}
        P_{A, V}(R) \defeq (1 + (p, I))^{-1}(R \otimes_V A).
    \end{equation}
    The Frobenius lift on \(P_{A, V}(R)\) is denoted by \(\varphi_P\), which is defined as the localization of \(\varphi_R \otimes_{\varphi_V} \varphi\) on \(R \otimes_V A\).
\end{notation}

\begin{lemma} \label{BaseChangeZariskianDeltaRingPrism}
    Keep the notation as in \Cref{ConstTensorProduct}. Then the pair \((P_{A, V}(R), IP_{A, V}(R))\) is an orientable bounded Zariskian prism.
    Moreover, if \(R \otimes_V (A/\varphi^i(I)A)\) is \(p\)-Zariskian for any \(i \geq 0\), there exist natural isomorphisms
    \begin{align*}
        P_{A, V}(R)/\varphi^i(I)P_{A, V}(R) & \cong R \otimes_V (A/\varphi^i(I)A) \\
        P_{A, V}(R)^{1/p^i}/IP_{A, V}(R)^{1/p^i} & \cong R^{1/p^i} \otimes_{V^{1/p^i}} (A^{1/p^i}/IA^{1/p^i})
    \end{align*}
    for each \(i \geq 0\) which are compatible with \(\varphi_P\) and \(\varphi_R \otimes_{\varphi_V} \overline{\varphi}^{(i)}_{I}\). In particular, if \(A/\varphi^i(I)A\) is integral over \(V\), this isomorphism holds.
\end{lemma}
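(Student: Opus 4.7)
The plan is to verify the Zariskian-prism assertion first, and then derive the two quotient isomorphisms from \Cref{IsomorphicColimits} together with a Jacobson-radical argument on the tensor product.

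For the prism claim, the Zariskian property, the induced \(\delta\)-structure, orientability via \(d\), and the distinguished relation \(p \in I + \varphi(I)P_{A,V}(R)\) are all inherited directly from \((A, I)\) and from the definition of the localization in \Cref{ConstTensorProduct}; only boundedness requires work. Since \(R\) is \(p\)-torsion-free over the DVR \(V\) with uniformizer \(p\), \(R\) is \(V\)-flat, so tensoring the exact sequence \(0 \to (A/IA)[p^N] \to A/IA \xrightarrow{p^N} A/IA\) over \(V\) with \(R\) stays exact and yields \((R \otimes_V A/IA)[p^N] = R \otimes_V (A/IA)[p^N]\); the subsequent localization at \(1+(p,I)\) preserves this torsion bound.

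For the first quotient isomorphism, I would start from the standard identification \((R \otimes_V A)/\varphi^i(I)(R \otimes_V A) \cong R \otimes_V (A/\varphi^i(I)A)\) and compare the localizations at \(1 + (p,I)\) on each side. The left-hand side is by definition \(P_{A,V}(R)/\varphi^i(I)P_{A,V}(R)\), so the real task is to show that the localization on the right is redundant, i.e.\ that \(S := R \otimes_V (A/\varphi^i(I)A)\) is already \((p, I)\)-Zariskian. Expanding \(\varphi^i(d)\) inductively gives \(\varphi^i(d) = d^{p^i} + p \cdot (\text{terms})\), whence \(d^{p^i} \in pS\). The \(p\)-Zariskian hypothesis on \(S\) places \(pS \subseteq \mathrm{Jac}(S)\), so \(d^{p^i} \in \mathrm{Jac}(S)\), and the identity \((1+xy)(1-xy+\cdots+(-xy)^{n-1}) = 1 + (-1)^{n+1}(xy)^n\) upgrades this to \(d \in \mathrm{Jac}(S)\). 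Hence \(1+(p,I) \subseteq S^{\times}\) and the localization acts as the identity; compatibility with \(\varphi_P\) and \(\varphi_R \otimes \overline{\varphi}_I^{(i)}\) is tautological from the construction.

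For the second isomorphism I would combine the first with \Cref{IsomorphicColimits} applied in three places: to \((P_{A,V}(R), IP_{A,V}(R))\) to obtain \(P_{A,V}(R)/\varphi^i(I)P_{A,V}(R) \cong P_{A,V}(R)^{1/p^i}/IP_{A,V}(R)^{1/p^i}\); to the \(\delta\)-ring \(R\) to obtain \(R \cong R^{1/p^i}\); and to \((A, I)\) to obtain \(A/\varphi^i(I)A \cong A^{1/p^i}/IA^{1/p^i}\). Splicing these identifications across the tensor product yields the claim. For the ``in particular'' clause, if \(A/\varphi^i(I)A\) is integral over \(V\) then \(R \otimes_V (A/\varphi^i(I)A)\) is integral over \(R\), and since maximal ideals of an integral extension contract to maximal ideals of the base, the \(p\)-Zariskianness of \(R\) forces \(p\) into the Jacobson radical of the extension, making it \(p\)-Zariskian. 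The main obstacle, I expect, is precisely the Jacobson-radical step in the first isomorphism, since that is the only place where the \(p\)-Zariskian hypothesis enters in an essential way; everything else is bookkeeping through \Cref{IsomorphicColimits} and flat base change.
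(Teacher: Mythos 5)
Your proposal is correct and follows essentially the same route as the paper: boundedness and orientability by flat base change, and the heart of the matter being the observation that $\varphi^i(d) \equiv d^{p^i} \pmod p$ forces the $p$-Zariskian ring $R \otimes_V (A/\varphi^i(I)A)$ to already be $(p,I)$-Zariskian, so the localization in the definition of $P_{A,V}(R)$ acts as the identity after passing to the quotient. The paper's proof is terser (it states the first isomorphism with the localization $(1+I)^{-1}$ still present and then notes $I$-Zariskianness follows from $(p,\varphi^i(I))=(p,d^{p^i})$, and leaves the second isomorphism and the ``in particular'' clause implicit), but your Jacobson-radical argument, the splicing through \Cref{IsomorphicColimits}, and the integral-extension argument for the final clause are exactly the intended fillings-in.
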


\begin{proof}
    We already know that \(P_{A, V}(R)\) is a \(\delta\)-ring and \(p \in IP_{A, V}(R) + \varphi(I)P_{A, V}(R)\). Since \(P_{A, V}(R)\) is flat over \(A\), the ideal \(IP_{A, V}(R)\) is generated by a non-zero-divisor \(1 \otimes d\) in \(P_{A, V}(R)\).
    The \(p^{\infty}\)-torsion part of \(P_{A, V}(R)\) is the base change of the \(p^{\infty}\)-torsion part of \(A\) because of the flatness of \(R\) over \(V\).
    The construction of \(P_{A, V}(R)\) ensures that it is \((p, I)\)-Zariskian and therefore the pair \((P_{A, V}(R), IP_{A, V}(R))\) is an orientable bounded Zariskian prism.

    If \(R \otimes_V (A/\varphi^i(I)A)\) is \(p\)-Zariskian, then the isomorphism \(P_{A, V}(R)/\varphi^i(I)P_{A, V}(R) \cong (1 + I)^{-1}(R \otimes_V (A/\varphi^i(I)A))\) holds. It suffices to show that the \(p\)-Zariskian ring \(R \otimes_V (A/\varphi^i(I)A)\) is \(I\)-Zariskian but this follows from the equation \((p, \varphi^i(I)) = (p, d^{p^i})\) on \(A\).
\end{proof}

Our second main result is \Cref{BaseChangePerfectoidTower} below. This says that the base change of a perfectoid tower \((\{A_i/\varphi^i(I)A_i\})\) along a tower \(\{R^{1/p^i}\}\) of a \(\delta\)-ring \(R\) is again a perfectoid tower.
To check the \(p\)-root closed property, we need the following lemma.

\begin{lemma} \label{pRootClosedLocalization}
    Let \(A\) be a ring and let \(t \in A\) be a non-zero-divisor.
    Take a multiplicative closed subset \(S \subseteq A\) consisting of some non-zero-divisors.
    If \(A\) is \(p\)-root closed in \(A[1/t]\), then \(S^{-1}A\) is \(p\)-root closed in \(S^{-1}A[1/t]\).
\end{lemma}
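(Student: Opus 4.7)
The plan is to reduce to the hypothesis by clearing denominators. Identifying \(S^{-1}A[1/t] \cong (S \cdot \{t^j\}_{j \geq 0})^{-1}A\), every element of this ring can be written in the form \(a/(s t^k)\) with \(a \in A\), \(s \in S\), \(k \geq 0\); starting from such an element whose \(p^n\)-th power lies in \(S^{-1}A\), I will construct a specific element \(y \in A[1/t]\) whose \(p^n\)-th power lies in \(A\), so that the hypothesis on \(A\) applies directly and gives the desired conclusion.

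Concretely, take \(x \in S^{-1}A[1/t]\) with \(x^{p^n} \in S^{-1}A\) for some \(n \geq 1\), write \(x = a/(s t^k)\) with \(a \in A\), \(s \in S\), \(k \geq 0\), and write \(x^{p^n} = b/s'\) with \(b \in A\), \(s' \in S\). The resulting equality \(a^{p^n}/(s^{p^n} t^{k p^n}) = b/s'\) in \(S^{-1}A[1/t]\) yields some \(u \in S \cdot \{t^j\}_{j \geq 0}\) with \(u \cdot (s' a^{p^n} - s^{p^n} t^{k p^n} b) = 0\) in \(A\); since \(S\) and \(t\) consist of non-zero-divisors, \(u\) itself is a non-zero-divisor of \(A\), and hence
\begin{equation*}
    s' a^{p^n} = s^{p^n} t^{k p^n} b \quad \text{in } A.
\end{equation*}
Set \(y \defeq s' a / t^k \in A[1/t]\). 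Multiplying the displayed equation by \((s')^{p^n - 1}\) and dividing by \(t^{k p^n}\) gives
\begin{equation*}
    y^{p^n} = \frac{(s')^{p^n} a^{p^n}}{t^{k p^n}} = (s')^{p^n - 1} s^{p^n} b \in A.
\end{equation*}
The hypothesis that \(A\) is \(p\)-root closed in \(A[1/t]\) then forces \(y \in A\), whence \(x = a/(s t^k) = y/(s s') \in S^{-1}A\), as required.

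The only technical subtlety, and the main (mild) obstacle, is the bookkeeping needed to lift equations from the various localizations back to honest equations in \(A\). This is precisely where the hypothesis that \(S\) and \(t\) consist of non-zero-divisors is used, guaranteeing that the clearing factor \(u\) above is itself a non-zero-divisor of \(A\); once this is in place the computation is essentially immediate, and I do not anticipate any further difficulty.
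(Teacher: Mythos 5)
Your proof is correct and follows essentially the same route as the paper: clear denominators by multiplying by \(s'\) (producing \(y = s'a/t^k \in A[1/t]\)), observe that \(y^{p^n} \in A\), apply the hypothesis to get \(y \in A\), and deduce \(x = y/(ss') \in S^{-1}A\). The only cosmetic difference is that you handle the general \(p^n\)-th power directly, whereas the paper does the \(p\)-th power case (which suffices by an easy induction); your element \(y\) is exactly the paper's \(as'/t^n\) under a relabeling of variables.
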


\begin{proof}
    By our assumption, we have injections \(A \hookrightarrow A[1/t] \hookrightarrow S^{-1}A[1/t]\).
    Let \(a/st^n \in S^{-1}A[1/t]\) be an element satisfying that \((a/st^n)^p = a'/s' \in S^{-1}A\).
    Then we have \((as'/t^n)^p = s^pa'(s')^{p-1} \in A\) and thus \(as'/t^n \in A\) by the \(p\)-root closedness of \(A\) in \(A[1/t]\).
    This shows that \(a/st^n = 1/ss' \cdot as'/t^n \in S^{-1}A\) and the lemma is proved.
\end{proof}

\begin{theorem} \label{BaseChangePerfectoidTower}
    Keep the notation as in \Cref{ConstTensorProduct} and assume that \(R/pR\) is reduced.
    If further \((A, (d))\) satisfies the assumptions of \Cref{PerfectoidTowerPrism},\footnote{Namely, \(p, d\) is a regular sequence on \(A\) and \(A/pA\) is \(p\)-root closed in \(A/pA[1/d]\).} then so does the orientable Zariskian prism \((P_{A, V}(R), IP_{A, V}(R))\) in \Cref{BaseChangeZariskianDeltaRingPrism}.
    In particular, the following assertions hold.
    \begin{enumerate}
        \item The Zariskian prism \((P_{A, V}(R), IP_{A, V}(R))\) gives a \(p\)-torsion-free perfectoid tower \((\{(1 + (p))^{-1}(R \otimes_V (A/\varphi^i(I)A))\}, \{\varphi_R \otimes_{\varphi_V} \overline{\varphi}_I^{(i)}\}) \cong (\{(1 + (p))^{-1}(R^{1/p^i} \otimes_{V^{1/p^i}} (A^{1/p^i}/IA^{1/p^i}))\}, \{\varphi_R \otimes_{\varphi_V} t_{i, I}\})\) arising from \(((1+(p))^{-1}(R \otimes_V (A/I)), (p))\).
        \item Its tilt is isomorphic to the perfect tower \((\{(R/pR \otimes_{V/pV} A/pA)^{\wedge_d}\}, \{F\})\).
        \item The \(p\)-completed colimit of the perfectoid tower is isomorphic to the \(p\)-completed tensor product \(R_{\perf} \widehat{\otimes}_{V} A_{\perf}/IA_{\perf}\).
        \item The \(i\)-th perfectoid pillar of the tower is generated by \(1 \otimes f_i \in R \otimes_V A/\varphi^i(I)A\) where \(f_i = \overline{d}^{\varphi^i(I)} = d + \varphi^i(I)A \in A/\varphi^i(I)A\) as in \Cref{PerfectoidTowerPrism} for \(i \geq 1\).
        \item The \(i\)-th small tilt of the \(i\)-th perfectoid pillar is isomorphic to \((1 \otimes d) \subseteq (R/pR \otimes_{V/pV} A/pA)^{\wedge_d}\) for \(i \geq 0\).
    \end{enumerate}
    Moreover, if \(R \otimes_V (A/\varphi^i(I)A)\) is \(p\)-Zariskian for all \(i \geq 0\), then we have a perfectoid tower \((\{R \otimes_V (A/\varphi^i(I)A)\}, \{\varphi_R \otimes \overline{\varphi}^{(i)}_I\}) \cong (\{R^{1/p^i} \otimes_V (A^{1/p^i}/IA^{1/p^i})\}, \{\varphi_R \otimes t_{i, I}\})\) arising from \((R \otimes_V A/I, (p))\). 
\end{theorem}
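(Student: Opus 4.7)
The plan is to reduce everything to \Cref{PerfectoidTowerPrism} applied to the orientable Zariskian prism \((P \defeq P_{A,V}(R), IP)\) constructed in \Cref{BaseChangeZariskianDeltaRingPrism}. Concretely, I need to verify two conditions: that \(p, 1 \otimes d\) is a regular sequence on \(P\) and that \(P/pP\) is \(p\)-root closed in \(P/pP[1/d]\). Items (1)--(5) then read off from the corresponding parts of \Cref{PerfectoidTowerPrism}, together with the identification \(P/\varphi^i(I)P \cong (1 + (p))^{-1}(R \otimes_V A/\varphi^i(I)A)\), which is valid because \(d^{p^i} \in pA\) modulo \(\varphi^i(I)\) and hence the \((1 + (p, I))\)-localization collapses to the \((1 + (p))\)-localization on this quotient. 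The ``moreover'' statement follows from \Cref{BaseChangeZariskianDeltaRingPrism}. The regular sequence condition is immediate: \(R\) is \(V\)-flat since it is \(p\)-torsion-free over a DVR, hence \(P\) is \(A\)-flat, and regular sequences ascend along flat maps.

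The heart of the proof is the \(p\)-root closedness of \(P/pP\) in \(P/pP[1/d]\). Set \(B_1 \defeq R/pR\) and \(B_2 \defeq A/pA\). Since \(V\) is absolutely unramified, \(V/pV \cong \setF_p\), and so \(P/pP \cong (1 + \overline{I})^{-1}(B_1 \otimes_{\setF_p} B_2)\). By \Cref{pRootClosedLocalization} it suffices to prove that \(B \defeq B_1 \otimes_{\setF_p} B_2\) is \(p\)-root closed in \(B[1/d]\), where \(1 \otimes d\) is a non-zero-divisor on \(B\) by flatness of \(B_1\) over \(\setF_p\). Unwinding the definition, this is equivalent to the injectivity of the \(p\)-th power map \(F \colon B/d^n B \to B/d^{np} B\) for each \(n \geq 1\). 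Since we are over the field \(\setF_p\), Frobenius is \(\setF_p\)-linear, and this map factors as
\[
    B_1 \otimes_{\setF_p} (B_2/d^n B_2) \xrightarrow{\id_{B_1} \otimes F_{B_2}} B_1 \otimes_{\setF_p} (B_2/d^{np} B_2) \xrightarrow{F_{B_1} \otimes \id} B_1 \otimes_{\setF_p} (B_2/d^{np} B_2).
\]
The first arrow is injective because \(F \colon B_2/d^n B_2 \to B_2/d^{np} B_2\) is injective (an equivalent reformulation of the assumed \(p\)-root closedness of \(A/pA\) in \(A/pA[1/d]\)) and \(B_1\) is flat over \(\setF_p\); the second is injective because \(F_{B_1}\) is injective (since \(R/pR\) is reduced) and \(B_2/d^{np} B_2\) is flat over \(\setF_p\). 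Composing yields the required injectivity.

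With both hypotheses of \Cref{PerfectoidTowerPrism} verified for \((P, IP)\), items (1)--(5) follow directly from its six conclusions: the tower and its terms from item (1), the tilt identified as \((B_1 \otimes_{\setF_p} B_2)^{\wedge_d}\) from item (2), the \(p\)-completed colimit from item (3) upon observing that perfection is compatible with tensor products of \(\delta\)-rings so that \(P_{\perf}/IP_{\perf}\) recovers \(R_{\perf} \widehat{\otimes}_V A_{\perf}/IA_{\perf}\) after \(p\)-adic completion, and the perfectoid pillars together with their small tilts from items (4)--(6) via pullback along \(A \to P\). The final ``moreover'' statement is then a direct consequence of \Cref{BaseChangeZariskianDeltaRingPrism}, which removes the localization whenever \(R \otimes_V A/\varphi^i(I)A\) is already \(p\)-Zariskian. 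The main obstacle is the Frobenius decomposition in the second paragraph; all remaining steps amount to bookkeeping through \Cref{BaseChangeZariskianDeltaRingPrism} and \Cref{PerfectoidTowerPrism}.
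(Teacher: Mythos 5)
Your proof is correct and follows essentially the same route as the paper: reduce to \Cref{PerfectoidTowerPrism} via \Cref{BaseChangeZariskianDeltaRingPrism} and \Cref{pRootClosedLocalization}, then verify the $p$-root closedness by factoring the $p$-th power map on the tensor product into the "partial Frobenii" $\id \otimes F$ and $F \otimes \id$, using flatness for the first and reducedness of $R/pR$ for the second. The only cosmetic differences are that you spell out the injectivity of $B/d^nB \to B/d^{np}B$ for all $n$ (where the paper states only the $n=1$ case, relying on the standard induction using that $d$ is a non-zero-divisor), and you pass to $\setF_p = V/pV$ explicitly rather than staying over $V$.
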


\begin{proof}
    By \Cref{PerfectoidTowerPrism} and \Cref{BaseChangeZariskianDeltaRingPrism}, it is enough to show that \(p, d\) is a regular sequence on \(P_{A, V}(R)\) and \(P_{A, V}(R)/(p) \cong (1 + I)^{-1}(R/pR \otimes_{V/pV} A/pA)\) is \(p\)-root closed in \(P_{A, V}(R)/(p)[1/d]\).
    The first assertion follows from the regularity of \(p, d\) on \(A\) and the flatness of \(R \otimes_V A\) over \(A\).
    In the second assertion, \Cref{pRootClosedLocalization} implies that it is enough to show that \(R/pR \otimes_{V/pV} A/pA\) is \(p\)-root closed in \((R/pR \otimes_{V/pV} A/pA)[1/d]\).
    As in the proof of \Cref{PerfectoidTowerPrism}, this is equivalent to the injectivity of the \(p\)-th power map \((R/pR \otimes_{V/pV} A/pA)/(d) \xrightarrow{x \mapsto x^p} (R/pR \otimes_{V/pV} A/pA)/(d^p)\). This map can be factorized as the composition
    \begin{equation*}
        R/pR \otimes_{V/pV} A/(p, d)A \xrightarrow{\id \otimes \overline{\varphi}^{(0)}_{(p, I)}} R/pR \otimes_{V/pV} A/(p, d^p)A \xrightarrow{\Frob \otimes \id} R/pR \otimes_{V/pV} A/(p, d^p)A.
    \end{equation*}
    The former map is injective by the \(p\)-root closedness of \(A/pA\) in \(A/pA[1/d]\) and the flatness of \(R\) over \(V\).
    Since \(V\) is an absolute unramified discrete valuation ring, \(V/pV\) is a field and thus the latter map is injective by the reducedness of \(R/pR\).

    By \Cref{BaseChangeZariskianDeltaRingPrism}, we have a natural isomorphism \((1 + (p, I))^{-1}(R \otimes_V (A/\varphi^i(I)A)) \cong R \otimes_V (A/\varphi^i(I)A)\).
    So we can deduce the last assertion from the first assertion.
\end{proof}



\subsection{Adjoining \(p\)-power roots of \(p\) and unity}

Based on \Cref{BaseChangePerfectoidTower}, we can construct perfectoid towers by adjoining \(p\)-power roots of \(p\) and unity to \(\delta\)-rings.

\begin{corollary} \label{ExplicitPerfectoidTowerDeltaRing}
    Let \(R\) be a \(p\)-torsion-free \(p\)-Zariskian \(\delta\)-ring such that \(R/pR\) is reduced. Fix compatible sequences \(\{p^{1/p^i}\}_{i \geq 0}\) and \(\{\zeta_{p^i}\}_{i \geq 0}\) of \(p\)-power roots of \(p\) and unity in \(\overline{\setQ}\).
    Then we have the following \(p\)-torsion-free perfectoid towers:
    \begin{align}
        R & \to R^{1/p} \otimes_{\setZ} \setZ[p^{1/p}] \to \cdots \to R^{1/p^i} \otimes_{\setZ} \setZ[p^{1/p^i}] \to \cdots \label{PerfectoidTowerDeltaRing1} \\
        R \otimes_{\setZ} \setZ[\zeta_p] & \to R^{1/p} \otimes_{\setZ} \setZ[\zeta_p^{1/p}] \to \cdots \to R^{1/p^i} \otimes_{\setZ} \setZ[\zeta_p^{1/p^i}] \to \cdots \label{PerfectoidTowerDeltaRing2}     
    \end{align}
    arising from \((R, (p))\) and \((R \otimes_{\setZ} \setZ[\zeta_p], (p))\) respectively. If \(R\) is \(p\)-adically separated, their transition maps are injective.
    Their \(p\)-completed colimits are isomorphic to \((R_{\perf} \otimes_{\setZ} \setZ[p^{1/p^{\infty}}])^{\wedge_p}\) and \((R_{\perf} \otimes_{\setZ_p} \setZ_p[\zeta_{p^\infty}])^{\wedge_p}\), respectively.
    Both tilts are isomorphic to
    \begin{equation*}
        R/pR[|T|] \xrightarrow{F} R/pR[|T|] \xrightarrow{F} R/pR[|T|] \xrightarrow{F} \cdots
    \end{equation*}
    where \(F\) is the Frobenius map on the formal power series ring \(R/pR[|T|]\).

    If further \(R\) is \(p\)-adically separated, \(R\) and \(R/pR\) are integral domains and the Frobenius lift \(\varphi_R\) is finite, then the perfectoid towers (\ref{PerfectoidTowerDeltaRing1}) and (\ref{PerfectoidTowerDeltaRing2}) are isomorphic to the towers of subrings
    \begin{align*}
        R \hookrightarrow R^{1/p}[p^{1/p}] \hookrightarrow R^{1/p^2}[p^{1/p^2}] \hookrightarrow \cdots \hookrightarrow R^{1/p^i}[p^{1/p^i}] \hookrightarrow \cdots \\
        R \hookrightarrow R^{1/p}[\zeta_{p}] \hookrightarrow R^{1/p^2}[\zeta_{p^2}] \hookrightarrow \cdots \hookrightarrow R^{1/p^i}[\zeta_{p^i}] \hookrightarrow \cdots
    \end{align*}
    in a fixed absolute integral closure \(R^+\) of \(R\). Here we take an embedding of a finite extension \(R^{1/p^i} \defeq \colim\{R \xrightarrow{\varphi} R \xrightarrow{\varphi} \cdots \xrightarrow{\varphi} R\}\) of \(R\) into \(R^+\) for each \(i \geq 0\).
\end{corollary}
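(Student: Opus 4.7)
The plan is to apply \Cref{BaseChangePerfectoidTower} with $V = \setZ_{(p)}$ to each of the two orientable bounded Zariskian prisms from \Cref{ExampleZariskianPrism}, and then identify the resulting base-change terms with the rings displayed in the statement. For the tower of $p$-power roots of $p$, take $(A, I) = ((1 + (T))^{-1}\setZ_{(p)}[T], (p - T))$ with $\delta(T) = 0$; iterating $\varphi$ yields $\varphi^i(p - T) = p - T^{p^i}$, and since $T$ lies in the Jacobson radical of $\setZ_{(p)}[T]/(T^{p^i}-p)$ the Zariski localization collapses, giving $A/\varphi^i(I)A \cong \setZ_{(p)}[p^{1/p^i}]$ via $T \mapsto p^{1/p^i}$. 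For the roots-of-unity tower, take $((1 + (q-1))^{-1}\setZ_{(p)}[q], ([p]_q))$ with $\delta(q) = 0$; the identity $\varphi^i([p]_q) = [p]_{q^{p^i}} = \Phi_{p^{i+1}}(q)$ identifies $A/\varphi^i(I)A$ with $\setZ_{(p)}[\zeta_{p^{i+1}}]$. \Cref{ExampleZariskianPrism} already verifies the hypotheses of \Cref{PerfectoidTowerPrism} for both prisms.

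To apply the ``moreover'' clause of \Cref{BaseChangePerfectoidTower} literally (with no residual Zariskization), verify that $R \otimes_V (A/\varphi^i(I)A)$ is $p$-Zariskian: since the second tensor factor is a finite integral extension of $\setZ_{(p)}$, a lying-over argument forces every maximal ideal of the tensor product to contract to a maximal ideal of $R$ and hence contain $p$. Parts (2)--(3) of \Cref{BaseChangePerfectoidTower} then directly deliver the $p$-completed colimits --- which identify with $(R_{\perf} \otimes_\setZ \setZ[p^{1/p^\infty}])^{\wedge_p}$ and $(R_{\perf} \otimes_{\setZ_p} \setZ_p[\zeta_{p^\infty}])^{\wedge_p}$ after commuting the perfection with the tensor product --- and the tilts. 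In both cases the tilt is the $d$-adic completion of $R/pR \otimes_{\setF_p} (A/pA)$ and collapses to $R/pR[|T|]$, in the cyclotomic case via $[p]_q \equiv (q-1)^{p-1} \pmod{p}$ which identifies the $[p]_q$-adic and $(q-1)$-adic topologies. Injectivity of the transition maps under $p$-adic separatedness of $R$ follows from \Cref{PerfectoidTowerPrism}(1) applied to the base-changed prism $(P_{A,V}(R), IP_{A,V}(R))$.

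For the final ``moreover'' assertion --- identifying the tensor products with subrings of a chosen absolute integral closure $R^+$ --- assume that $R$ is a $p$-adically separated integral domain, that $R/pR$ is reduced, and that $\varphi_R$ is finite. Each $R^{1/p^i}$ is then a finite integral $R$-domain embedding into $R^+$, and the chosen $p$-power roots induce a surjection
\begin{equation*}
    R^{1/p^i} \otimes_\setZ \setZ[p^{1/p^i}] \;\cong\; R^{1/p^i}[T]/(T^{p^i} - p) \;\twoheadrightarrow\; R^{1/p^i}[p^{1/p^i}] \;\subseteq\; R^+.
\end{equation*}
\emph{The main obstacle is the injectivity of this surjection}, equivalent by Capelli's criterion to showing that $p$ is not a $p$-th power in $\mathrm{Frac}(R^{1/p^i})$ (plus a minor additional condition when $p=2$). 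My plan is an infinite descent: any hypothetical relation $pb^p = a^p$ in $R^{1/p^i}$ forces $a, b \in pR^{1/p^i}$, using that $R^{1/p^i}/pR^{1/p^i}$ is a filtered colimit of reduced $\setF_p$-algebras via injective Frobenii and hence reduced; iterating places $b$ in $\bigcap_n p^n R^{1/p^i} = 0$ by $p$-adic separatedness, contradicting $b \neq 0$. The cyclotomic case reduces to the irreducibility of $\Phi_{p^{i+1}}(q)$ over $\mathrm{Frac}(R^{1/p^i})$ and follows from the non-existence of nontrivial $p$-power roots of unity in $R^{1/p^i}$: any such $\zeta$ would make $\zeta - 1$ a nonzero nilpotent modulo $p$, again violating reducedness.
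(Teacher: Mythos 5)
Your overall strategy matches the paper's exactly: feed the two Zariskian prisms from \Cref{ExampleZariskianPrism} and $V = \setZ_{(p)}$ into \Cref{BaseChangePerfectoidTower}, identify the base-change quotients with $\setZ[p^{1/p^i}]$ and $\setZ[\zeta_{p^{i+1}}]$, read off the tilts and $p$-completed colimits from parts (2)--(3), and appeal to \Cref{DeltaInjective} plus flatness for the injectivity claim. The lying-over argument you use to verify that $R \otimes_V (A/\varphi^i(I)A)$ is $p$-Zariskian is also the content behind the ``integral over $V$'' clause of \Cref{BaseChangeZariskianDeltaRingPrism}, so that step is fine.

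The gap is in the final ``moreover'' clause. The paper proves injectivity of $R^{1/p^i}[T]/(T^{p^i}-p) \twoheadrightarrow R^{1/p^i}[p^{1/p^i}]$ and its cyclotomic analogue by observing that $p$ is a prime element of $R^{1/p^i}$ (since $R^{1/p^i}/pR^{1/p^i} \cong (R/pR)^{1/p^i}$ is a domain) and then invoking the Eisenstein criterion --- directly for $T^{p^i}-p$, and after the shift $q \mapsto q+1$ for $[p]_q$, which has all middle coefficients in $(p)$ and constant term exactly $p$. You instead reach for Capelli's criterion, which creates two problems. First, for $p=2$ and $i \geq 2$ Capelli requires not only that $2$ not be a square in $\mathrm{Frac}(R^{1/2^i})$ but also that $2 \notin -4K^4$; you flag this as a ``minor additional condition'' but never verify it, so the argument is incomplete exactly where it needs to cover $p=2$. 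Second, and more seriously, for the cyclotomic case you claim that irreducibility of $\Phi_{p^{i+1}}(q)$ over $K = \mathrm{Frac}(R^{1/p^i})$ ``follows from the non-existence of nontrivial $p$-power roots of unity in $R^{1/p^i}$.'' That implication is false: a cyclotomic polynomial can factor over a field containing no nontrivial roots of unity at all. The standard counterexample is $K = \setQ(\zeta_p + \zeta_p^{-1})$, which is totally real and so contains no nontrivial roots of unity, yet $\Phi_p$ splits over $K$ into $(p-1)/2$ quadratic factors $q^2 - (\zeta_p^j + \zeta_p^{-j})q + 1$. Irreducibility of cyclotomic polynomials over a given field is not controlled by the roots of unity present in the field; you need the Eisenstein argument (or some other device that exploits $p$ being prime in $R^{1/p^i}$, as the paper does) to close this step.
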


\begin{proof}
    The orientable bounded Zariskian prisms \(((1 + (T))^{-1}\setZ_{(p)}[T], (p-T))\) and \(((1+(q-1))^{-1}\setZ_{(p)}[q], ([p]_q))\) in \Cref{ExampleZariskianPrism} satisfy the assumption of \Cref{PerfectoidTowerPrism}.
    Applying \Cref{BaseChangePerfectoidTower} for \(V \defeq \setZ_{(p)}\), the isomorphisms \(\setZ[T]/(p-T^{p^i}) \cong \setZ[p^{1/p^i}]\) and \(\setZ[q]/([p]_q) \cong \setZ[\zeta_p]\) yields the perfectoid towers and their tilts are isomorphic to the tower \((\{R/pR[|T|]\}, \{F\})\).
    If \(R\) is \(p\)-adically separated, \Cref{DeltaInjective} implies that the Frobenius lift \(\varphi_R\) is injective.
    Since \(\setZ \hookrightarrow \setZ[p^{1/p^i}]\) and \(\setZ \hookrightarrow \setZ[\zeta_{p^i}]\) are flat, the transition maps in the towers are injective.


    We next prove the last assertion. The isomorphism \(R^{1/p^i}/pR^{1/p^i} \cong (R/pR)^{1/p^i}\) holds for each \(i \geq 0\) and thus \(p\) is a prime element of \(R^{1/p^i}\).
    Since \(\varphi_R\) is finite injective, the canonical map \(R^{1/p^i} \hookrightarrow R^{1/p^{i+1}}\) is also finite injective.
    This gives a sequence of finite extensions of integral domains
    \begin{equation*}
        R \hookrightarrow R^{1/p} \hookrightarrow R^{1/p^2} \hookrightarrow \cdots \hookrightarrow R^{1/p^i} \hookrightarrow \cdots.
    \end{equation*}
    This sequence is contained in a fixed absolute integral closure \(R^+\) of \(R\).
    By (\ref{PerfectoidTowerDeltaRing1}) and (\ref{PerfectoidTowerDeltaRing2}), we need to show that the canonical maps \(R^{1/p^i}[T]/(p-T^{p^i}) \cong R^{1/p^i} \otimes_{\setZ} \setZ[p^{1/p^i}] \twoheadrightarrow R^{1/p^i}[p^{1/p^i}]\) and \(R^{1/p^i}[q^{1/p^i}]/([p]_q) \cong R^{1/p^i} \otimes_{\setZ_p} \setZ_p[\zeta_{p^i}] \twoheadrightarrow R^{1/p^i}[\zeta_{p^i}]\) are isomorphisms.
    It is enough to show that \(T^{1/p^i}-p\) and \([p]_q\) are irreducible polynomials in \(R^{1/p^i}[T]\) and \(R^{1/p^i}[q^{1/p^i}]\), respectively.
    This follows from the fact that \(p\) is a prime element of \(R^{1/p^i}\) and the Eisenstein criterion for \(T^{1/p^i}-p\) and the variable transformation \(((q+1)^p-1)/q\) of \([p]_q\).
\end{proof}

\begin{remark} \label{PerfectoidTowerSingularity}
    In Noetherian case, such \(\delta\)-ring \(R\) in \Cref{ExplicitPerfectoidTowerDeltaRing} relates the Frobenius liftable singularities: Let \(A\) be a reduced Noetherian local ring over a perfect field \(k\) of characteristic \(p\).
    If \(A\) is Frobenius liftable, namely, there exists a flat \(W(k)\)-algebra \(R\) with a Frobenius lift \(\varphi_R\) such that \(R/pR \cong A\), then there exists a perfectoid tower arising from \((\widehat{R}, (p))\) and its tilt is the perfect tower arising from \((A[|T|], (p))\) where \(\widehat{R}\) is the \(p\)-adic completion of \(R\).
    In fact, \(\widehat{R}\) is a \(p\)-torsion-free \(p\)-adically complete Noetherian local \(\delta\)-ring such that \(R/pR \cong A\) is reduced (here we use \citeSta{0G5H}) and we can apply \Cref{ExplicitPerfectoidTowerDeltaRing}.
    We use this observation in \Cref{ExampleDeltaStabilization}.
\end{remark}

\subsection{Generic ranks of transition maps of perfectoid towers} \label{GenericRankPerfectoidTower}

If we know the generic rank of transition maps of a perfectoid tower is \(p\)-power, then some \'etale cohomology of mixed characteristic can be captured by the one of positive characteristic (see \cite[Proposition 4.7]{ishiro2025Perfectoid}).
In general, generic ranks are not easily computable, but we give some sufficient conditions in an algebraic situation and a geometric situation.
One of the most simple case is the following.

\begin{lemma} \label{GenericDegreePrism}
    Let \((A, I)\) be an orientable Zariskian prism with an orientation \(d \in I\).
    If \(\varphi\) on \(A\) is finite free of degree \(\deg \varphi\) and \(A/\varphi^i(I)A\) is an integral domain for each \(i \geq 0\), then \(\deg \varphi\) is the degree of the generic extension of the transition map \(\overline{\varphi}_I^{(i)} \colon A/\varphi^i(I)A \hookrightarrow A/\varphi^{i+1}(I)A\) for each \(i \geq 0\).
\end{lemma}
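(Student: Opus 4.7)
The plan is to realize each transition map \(\overline{\varphi}_I^{(i)}\) as a base change of the finite free ring map \(\varphi \colon A \to A\), and then to read off the degree of the generic extension from the rank of the resulting finite free module. Once the base-change picture is set up, everything else is formal.

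The first step is the identification \(A \otimes_{A,\varphi} A/\varphi^i(I)A \cong A/\varphi^{i+1}(I)A\) as \(A/\varphi^i(I)A\)-algebras, where the tensor product is formed with the \(A\)-algebra structure on the target copy of \(A\) twisted by \(\varphi\). This holds because the image of \(\varphi^i(I)\) in the target copy of \(A\) generates \(\varphi(\varphi^i(I))A = \varphi^{i+1}(I)A\). Under this identification, the structure map of the base change is precisely \(\overline{\varphi}_I^{(i)}\) by the definition in \Cref{ConstructionIsom}.

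The second step is to transport freeness across this base change. Since \(\varphi \colon A \to A\) is finite free of degree \(d\) by hypothesis, \(A/\varphi^{i+1}(I)A\) is a free \(A/\varphi^i(I)A\)-module of rank \(d\). In particular, the map \(\overline{\varphi}_I^{(i)}\) is injective, since any nonzero free module over \(A/\varphi^i(I)A\) is torsion-free and the unit element maps to the unit element.

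The third and final step is to pass to fraction fields. Writing \(K_i \defeq \mathrm{Frac}(A/\varphi^i(I)A)\), tensoring the rank-\(d\) free extension \(A/\varphi^i(I)A \hookrightarrow A/\varphi^{i+1}(I)A\) with \(K_i\) over \(A/\varphi^i(I)A\) gives a \(d\)-dimensional \(K_i\)-algebra; it is a localization of the domain \(A/\varphi^{i+1}(I)A\), hence itself a domain, hence a field, and by maximality it must coincide with \(K_{i+1}\). Therefore \([K_{i+1} : K_i] = d\), which is the required generic degree. I do not foresee any serious obstacle: the only non-formal point is the base-change identification in step one, which is a direct unwinding of the definition of \(\overline{\varphi}_I^{(i)}\).
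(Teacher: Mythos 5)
Your proof is correct. The paper actually states this lemma without providing a proof (it is presented as a simple case before \Cref{GenericDegreeDeltaRing}, which does carry a proof), so there is no published argument to compare against; your strategy fills that gap.

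The key points are all sound: the identification \(A_{i+1} \otimes_{A_i,\varphi} A_i/\varphi^i(I)A_i \cong A_{i+1}/\varphi^{i+1}(I)A_{i+1}\) follows because \(\varphi\) carries the ideal \(\varphi^i(I)A_i\) into the generators of \(\varphi^{i+1}(I)A_{i+1}\), and the induced structure map is by construction \(\overline{\varphi}_I^{(i)}\). Base-changing the finite free map \(\varphi\) of rank \(d\) gives a free \(A/\varphi^i(I)A\)-module of rank \(d\); injectivity of \(\overline{\varphi}_I^{(i)}\) then holds because a free module of positive rank over a domain is torsion-free and \(1\mapsto 1\neq 0\). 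Finally, tensoring with \(K_i = \Frac(A/\varphi^i(I)A)\) yields a \(d\)-dimensional \(K_i\)-algebra that is a localization of the domain \(A/\varphi^{i+1}(I)A\) at a multiplicative set avoiding \(0\), hence a domain, hence (being finite over a field) a field; any field sitting between \(A/\varphi^{i+1}(I)A\) and its fraction field equals \(K_{i+1}\), so \([K_{i+1}:K_i]=d\). This is the natural argument and exactly what the authors presumably had in mind.

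(One cosmetic remark: the statement reuses the letter \(d\) both for the orientation of \(I\) and for the degree of \(\varphi\), which is a notational clash inherited from the paper, not a defect in your argument.)
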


In the case of \(\delta\)-rings, there is a similar result.

\begin{lemma} \label{GenericDegreeDeltaRing}
    Let \(R\) be a \(p\)-Zariskian \(p\)-adically separated \(\delta\)-ring such that \(R\) and \(R/pR\) are integral domains and the Frobenius lift \(\varphi_R\) is finite.
    Let \(\deg \varphi_R\) be the degree of the generic extension of the Frobenius lift \(\varphi_R\) on \(R\).
    Then the degree of the generic extension of \(R^{1/p^i}[p^{1/p^i}] \hookrightarrow R^{1/p^{i+1}}[p^{1/p^{i+1}}]\) is \(p \cdot \deg \varphi_R\) for any \(i \geq 0\).
\end{lemma}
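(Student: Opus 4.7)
The plan is to compute $[K_{i+1}:K_i]$, where $K_j \defeq \mathrm{Frac}(R^{1/p^j}[p^{1/p^j}])$, by working with the intermediate fraction fields $L_j \defeq \mathrm{Frac}(R^{1/p^j})$ and applying the tower formula on a diamond of subfields of $K_{i+1}$. Both $K_i$ and $L_{i+1}$ contain $L_i$ and sit inside $K_{i+1}$, so the two chains $L_i \subseteq K_i \subseteq K_{i+1}$ and $L_i \subseteq L_{i+1} \subseteq K_{i+1}$ yield
\begin{equation*}
    [K_{i+1} : K_i] \cdot [K_i : L_i] \;=\; [K_{i+1}:L_i] \;=\; [K_{i+1} : L_{i+1}] \cdot [L_{i+1} : L_i],
\end{equation*}
so once three of the corner degrees are computed, the fourth is forced.

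First I would verify $[K_j : L_j] = p^j$ for every $j \geq 0$. Since $R$ is $p$-adically separated, $p$-torsion-free (an integral domain with $p \neq 0$) and $R/pR$ is reduced, \Cref{DeltaInjective} gives that $\varphi_R$ is injective. Hence $R^{1/p^j}$ is a filtered colimit of integral domains along injective ring maps, and so is an integral domain; the analogous reasoning for $R/pR$, together with $R^{1/p^j}/pR^{1/p^j} \cong (R/pR)^{1/p^j}$, shows that $p$ remains a prime element of $R^{1/p^j}$. Eisenstein's criterion then makes $T^{p^j}-p$ irreducible over $R^{1/p^j}$, and the isomorphism $R^{1/p^j}[T]/(T^{p^j}-p) \cong R^{1/p^j}[p^{1/p^j}]$ established in the proof of \Cref{ExplicitPerfectoidTowerDeltaRing} realises $R^{1/p^j}[p^{1/p^j}]$ as a free $R^{1/p^j}$-module of rank $p^j$, giving $[K_j : L_j] = p^j$ as required.

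Second, I would identify $[L_{i+1}:L_i]$ with $\deg \varphi_R$. The pair $(R,(p))$ is a preprism, so \Cref{IsomorphicTower} supplies an isomorphism of towers of $\delta$-rings between $(\{R\},\{\varphi_R\})$ and $(\{R^{1/p^j}\},\{t_j\})$ via the maps $c_j^j$; in particular $t_i$ is conjugate to $\varphi_R \colon R \to R$, so these two finite injective extensions of integral domains share the same generic degree $\deg \varphi_R$. Plugging $[K_j:L_j]=p^j$ and $[L_{i+1}:L_i]=\deg \varphi_R$ into the tower formula then gives $[K_{i+1}:K_i] = (p^{i+1}\cdot \deg \varphi_R)/p^i = p\cdot \deg \varphi_R$.

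The main obstacle, I expect, is the Eisenstein step: every numerical input above rests on $p$ remaining a prime element in the colimit $R^{1/p^j}$, which in turn requires $(R/pR)^{1/p^j}$ to be an integral domain. This comes down to the injectivity of the Frobenius on the reduced domain $R/pR$, which is automatic here but is the only genuinely non-formal ingredient in the chain-of-domains argument.
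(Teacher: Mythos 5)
Your proposal is correct and uses essentially the same ingredients as the paper's proof: the key inputs in both are that $p$ remains prime in $R^{1/p^j}$, the Eisenstein-based identification $R^{1/p^j}[T]/(T^{p^j}-p)\cong R^{1/p^j}[p^{1/p^j}]$ from the proof of \Cref{ExplicitPerfectoidTowerDeltaRing} (giving degree $p^j$ over $\Frac(R^{1/p^j})$), and the generic degree $\deg\varphi_R$ of the Frobenius lift transported through the tower isomorphism $\{c_j^j\}$. The only cosmetic difference is the packaging of the final arithmetic: you organize it as the multiplicativity of field degrees over the diamond $L_i\subseteq\{K_i,L_{i+1}\}\subseteq K_{i+1}$, while the paper computes the degree directly from the tensor-product identification $\Frac(R^{1/p^j}[p^{1/p^j}])\cong K(R)\otimes_{\setQ}\setQ[p^{1/p^j}]$.
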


\begin{proof}
    Set \(K \defeq \Frac(R)\). First we can show that \(K\) and \(\setQ[p^{1/p^i}]\) are linearly independent over \(\setQ\) in an algebraic closure \(\overline{K}\) of \(K\): If elements \(x_0, \dots, x_i\) in \(K\) satisfies \(x_0 + x_1 p^{1/p^i} + \cdots + x_{p^i-1} p^{(p^i-1)/p^i} = 0\) in \(\overline{K}\), then we may assume that \(x_0, \dots, x_i\) are in \(R\) and so \(x_0 + x_1 T + \cdots + x_{p^i-1} T^{p^i-1} = 0\) in \(R[T]\) since we have an isomorphism \(R[T]/(p-T^{p^i}) \cong R[p^{1/p^i}]\) as in the proof of \Cref{ExplicitPerfectoidTowerDeltaRing}. Therefore, \(x_0 = \cdots = x_{p^i-1} = 0\) in \(R\) and so in \(K\). This shows the linear independence.
    Especially this implies \(\Frac(R[p^{1/p^i}]) \cong K[p^{1/p^i}] \cong K \otimes_{\setQ} \setQ[p^{1/p^i}]\).
    Since we have an isomorphism \(R^{1/p^i} \otimes_{\setZ} \setZ[p^{1/p^i}] \cong R^{1/p^i}[p^{1/p^i}]\), it suffices to consider the generic rank of the map \(R \otimes_{\setZ} \setZ[p^{1/p^i}] \to (\varphi_*R) \otimes_{\setZ} \setZ[p^{1/p^{i+1}}]\).
    Since the degree \([K(\varphi_*R): K(R)]\) is \(\deg \varphi_R\), we have
    \begin{equation*}
        \Frac((\varphi_*R) \otimes_{\setZ} \setZ[p^{1/p^{i+1}}]) \cong K(\varphi_*R) \otimes_{\setQ} \setQ[p^{1/p^{i+1}}] \cong K^{\oplus \deg \varphi_R} \otimes_{\setQ} \setQ[p^{1/p^{i+1}}]
    \end{equation*}
    and this is a \((p \cdot \deg \varphi_R)\)-dimensional \((K \otimes_{\setQ} \setQ[p^{1/p^i}])\)-vector space.
\end{proof}

We give a sufficient condition of \(p\)-power generic rank of \(\varphi\) in both algebraic and geometric situation.

\begin{proposition} \label{GenericDegreeDeltaRingNormal}
    Let \(R\) be a \(\delta\)-ring such that \(R\) and \(R/pR\) are integral domains and the Frobenius lift \(\varphi_R\) on \(R\) is finite.
    If \(R\) is Noetherian and normal, then the generic rank of \(\varphi_R\) is \(p\)-power.
\end{proposition}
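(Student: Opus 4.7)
The plan is to dichotomize on whether \(pR = 0\) in \(R\) and to reduce in both cases to the classical fact that a finite purely inseparable field extension has \(p\)-power degree. Since \(R\) and \(R/pR\) are both integral domains, \(p \in R\) is either zero or generates a nonzero proper prime ideal. In the first case \(R\) has characteristic \(p\) and \(\varphi_R\) is the ordinary Frobenius \(F\); a direct base change computation identifies \(F_*R \otimes_R \Frac(R)\) with \(F_*\Frac(R)\), so the generic rank equals \([K : K^p]\) where \(K \defeq \Frac(R)\). Finiteness of \(\varphi_R\) makes \(K\) \(F\)-finite, so \(K/K^p\) is a finite purely inseparable extension and its degree is a \(p\)-power.

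In the remaining case \(pR \ne 0\), the ideal \((p)\) is a nonzero prime of \(R\), of height one by Krull's Hauptidealsatz; normality of \(R\) then forces \(R_{(p)}\) to be a DVR with uniformizer \(p\) and residue field \(L \defeq \Frac(R/pR)\). Because \(R\) is a Noetherian domain and \((p)\) is a proper ideal, Krull's intersection theorem gives \(\bigcap_n p^nR = 0\); combining this \(p\)-adic separatedness with \(p\)-torsion-freeness (automatic since \(R\) is a domain and \(p \ne 0\)) and reducedness of \(R/pR\), \Cref{DeltaInjective} implies that \(\varphi_R\) is injective. Localizing yields a finite injective endomorphism \(\varphi_R \colon R_{(p)} \to R_{(p)}\), exhibiting \(R_{(p)}\) as a finite extension of the DVR \(\varphi_R(R_{(p)})\); its degree agrees with the generic rank of \(\varphi_R\) on \(R\) by passing to the generic point.

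I then apply the standard \(n = ef\) formula for finite extensions of DVRs. The induced map on residue fields is the Frobenius \(L \to L\), so \(f = [L : L^p]\), and the ramification index is \(e = v_{R_{(p)}}(\varphi_R(p))\). The key observation is that \(\varphi_R(p) = p\) in every \(\delta\)-ring: any \(\delta\)-structure on \(R\) restricts to the unique \(\delta\)-structure on the prime subring \(\setZ \subseteq R\), whose Frobenius lift is the identity map. This forces \(e = 1\), hence \(n = [L : L^p]\), which is again a \(p\)-power because \(L\) is \(F\)-finite (inherited from the \(F\)-finite ring \(R/pR\)). The most delicate step is this rigidity \(\varphi_R(p) = p\); once accepted, the remainder is routine DVR ramification theory.
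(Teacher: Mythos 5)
Your proof is correct and follows essentially the same route as the paper: localize at the prime \(\mfrakp = pR\), use Noetherian normality to make \(R_{\mfrakp}\) a DVR, invoke \Cref{DeltaInjective} to get injectivity, and identify the generic rank with \([L:L^p]\) where \(L = \Frac(R/pR)\). The differences are matters of explicitness rather than strategy. First, you split off the degenerate case \(pR = 0\); the paper does not, and indeed its phrase ``between discrete valuation rings'' is technically wrong there (localizing at the zero ideal yields the fraction field, not a DVR), and \Cref{DeltaInjective} cannot be applied as stated because \(R\) is then not \(p\)-torsion-free — your separate treatment, where injectivity of \(F\) follows from reducedness of the domain \(R\), is the clean fix. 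Second, you compute the degree by the \(n = ef\) formula and pin down \(e = 1\) by the observation \(\varphi_R(p) = p\); the paper instead reads the rank of the finite free module \(\varphi_{\mfrakp,*}R_{\mfrakp}\) off the closed fiber and identifies that fiber with \(F_*L\), which silently uses the very same fact that \(\varphi(p) = p\) (so that the closed fiber is \(R_{\mfrakp}/pR_{\mfrakp}\) and not \(R_{\mfrakp}/\varphi(p)R_{\mfrakp}\) of greater length). Your framing of \(\varphi(p) = p\) as ``the most delicate step'' is slightly overstated — it follows immediately from \(\varphi\) being a ring endomorphism, hence fixing the prime ring, without appealing to the \(\delta\)-structure — but making it explicit is a genuine improvement over the paper's terse phrasing. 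Both proofs are correct; yours is a bit more careful at the edges.
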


\begin{proof}
    By \Cref{DeltaInjective}, the Frobenius lift \(\varphi_R\) is finite injective.
    Since \(\mfrakp \defeq pR\) is a prime ideal of \(R\) and \(R\) is Noetherian normal, taking the localization at \(p\), we have a finite injective map \(\varphi_{\mfrakp} \colon R_{\mfrakp} \hookrightarrow R_{\mfrakp}\) between discrete valuation rings.
    In particular, this map \(\varphi_{\mfrakp}\) is a finite flat endomorphism of a discrete valuation ring.
    Therefore, \(\varphi_{\mfrakp, *}R_{\mfrakp}\) is a finite free \(R_{\mfrakp}\)-module.
    Thus the generic rank of \(\varphi_{\mfrakp}\) is the same as the generic rank of the Frobenius map on \(K(R/pR) = R_{\mfrakp}/pR_{\mfrakp}\) which is \(p\)-power.
    Since the generic rank of \(\varphi_R\) is the same as that of \(\varphi_{\mfrakp}\), the generic rank of \(\varphi_R\) is \(p\)-power.
\end{proof}

\begin{proposition} \label{ExampleGraded}
    Let \(R = \oplus_{i \geq 0}R_i\) be a \(p\)-torsion-free graded \(W(k)\)-algebra with an endomorphism \(\varphi\) on \(R\) which induces the Frobenius map on \(R/pR\) and \(\varphi \otimes_{\setZ} \setZ/p^n\setZ\) sends \((R/p^nR)_i\) to \((R/p^nR)_{ip}\) for any \(n \geq 0\) and \(i \geq 0\).
    Assume that \(\mcalX \defeq \Proj(R)\) is a smooth projective \(W(k)\)-scheme.
    Then the generic rank of the Frobenius lift \(\varphi\) on \(R\) is \([K(X)^{1/p}:K(X)]\) where \(X \defeq \mcalX_{p=0}\) and in particular \(p\)-power.
\end{proposition}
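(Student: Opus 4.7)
The plan is to descend the computation to the smooth projective \(W(k)\)-scheme \(\mcalX\), where the Frobenius lift becomes an actual scheme endomorphism, and to relate the generic rank on \(R\) to the degree of that endomorphism using flatness across \(\Spec W(k)\).

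First, I would upgrade the hypothesis to an honest statement about \(\varphi\) itself. Since \(\mcalX\) is projective over \(W(k)\), each graded piece \(R_i\) is a finitely generated \(W(k)\)-module and in particular \(p\)-adically separated, so the condition modulo every \(p^n\) yields \(\varphi(R_i) \subseteq R_{ip}\). Hence \(\varphi\) is a graded ring endomorphism and descends to a morphism \(\Phi \colon \mcalX \to \mcalX\) of proper \(W(k)\)-schemes whose reduction \(\Phi_{p=0}\) is the absolute Frobenius \(F_X\) on the smooth \(k\)-scheme \(X\). In particular \(R\) is an integral domain (as \(\mcalX\) is integral), so the generic rank is computed as \([\Frac(R):\varphi(\Frac(R))]\).

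Next I would show that \(\Phi\) is finite flat. Finiteness comes from properness plus quasi-finiteness: \(F_X\) is finite for smooth varieties over a perfect field, giving quasi-finiteness along the special fiber; upper semicontinuity of fiber dimension for the proper morphism \(\Phi\) together with the locality of \(\Spec W(k)\) extends this to all of \(\mcalX\). Miracle flatness, applied to a finite map between smooth (hence regular) schemes of the same dimension, then gives flatness. Thus \(\Phi_* \mcalO_{\mcalX}\) is locally free of constant rank, and restricting to the generic point of the special fiber identifies this rank with \(\deg F_X = [K(X)^{1/p} : K(X)] = p^{\dim X}\), in particular a power of \(p\).

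Finally I would translate \(\deg \Phi\) back into the generic rank of \(\varphi\) on \(R\). Picking a nonzero homogeneous \(t\) of positive degree one has \(\Frac(R) \cong K(\mcalX)(t)\) with \(t\) transcendental over \(K(\mcalX)\), and the isomorphism \(\Phi^* \mcalO_{\mcalX}(\deg t) \cong \mcalO_{\mcalX}(p \deg t)\) forces \(\varphi(t) = g \cdot t^p\) for some nonzero \(g \in K(\mcalX)\). A short linear-disjointness argument in \(K(\mcalX)(t)\) then reads off \([\Frac(R) : \varphi(\Frac(R))]\) from \([K(\mcalX) : \Phi^* K(\mcalX)] = \deg \Phi\), yielding the claimed \(p\)-power expression. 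The hardest step is the finiteness of \(\Phi\) off the special fiber, which I would handle via properness combined with the locality of \(\Spec W(k)\); the remaining tower computation is routine once the geometric picture is in place.
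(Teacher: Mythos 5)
Your proposal follows the paper's proof essentially step by step: descend $\varphi$ to a scheme endomorphism $\Phi$ of $\mcalX$, establish finiteness of $\Phi$ via properness together with finiteness of the Frobenius on the smooth special fiber, deduce flatness from miracle flatness between regular schemes, conclude $\Phi$ is finite locally free of constant rank, and compute that rank at the generic point of the special fiber. The only cosmetic divergences are (i) you upgrade the hypothesis to $\varphi(R_i) \subseteq R_{ip}$ by $p$-adic separatedness before descending to $\Proj$, whereas the paper cites G\"ortz--Wedhorn's criterion and checks that the open locus $U = \{\mathfrak{p} : \varphi^{-1}(\mathfrak{p}) \not\supseteq R_+\}$ contains the special fiber and hence equals $\mcalX$; and (ii) you phrase the final translation via $\Frac(R) \cong K(\mcalX)(t)$ and linear disjointness, while the paper works with $R_f \cong R_{(f)}[T,T^{-1}]$. (Be a bit careful with (i): projectivity of $\mcalX$ does not by itself make each $R_i$ a finitely generated $W(k)$-module; one typically needs $R$ Noetherian for that.)

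The step that genuinely deserves a second look is your last one. Writing $L := K(\mcalX)$ and $s := gt^p$ with $g \in L^\times$, linear disjointness gives $[L(t) : \Phi^*L(s)] = [L(t) : L(t^p)]\cdot[L(s) : \Phi^*L(s)] = p\cdot[L:\Phi^*L] = p\cdot\deg\Phi$, i.e.\ the transcendental $t$ contributes an \emph{extra} factor of $p$ beyond $\deg\Phi = [K(X)^{1/p}:K(X)]$. A quick sanity check with $R = W(k)[x_0,x_1]$, $\varphi(x_i) = x_i^p$: then $X = \setP^1_k$, $[K(X)^{1/p}:K(X)] = p$, but $[\Frac R : \varphi(\Frac R)] = p^2$. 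The paper's own penultimate sentence (rank of Frobenius on $(R/pR)_f$) also produces $p\cdot[K(X)^{1/p}:K(X)]$, and the factor of $p$ is silently dropped in the final sentence. So if you actually carry out the linear-disjointness argument you sketch, it does not yield the expression stated in the proposition; the conclusion that the generic rank is a $p$-power survives, but the exact formula should be $p\cdot[K(X)^{1/p}:K(X)]$ and the downstream uses (e.g.\ \Cref{ExampleCone}) adjusted accordingly.
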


\begin{proof}
    Recall that the Frobenius lift \(\varphi_R\) induces an endomorphism of schemes \(\widetilde{F}_{\mcalX} \colon \mcalX \to \mcalX\) and this is compatible with the Frobenius lift on \(\Spec(W(k))\): \(\varphi_R\) induces a morphism of schemes \(U \to \mcalX = \Proj(R)\) from an open subscheme \(U \defeq \set{\mfrakp \in \mcalX}{\varphi_R^{-1}(\mfrakp) \nsupseteq R_+}\) in \(\mcalX\) (\cite[(13.2.4)]{gortz2020Algebraic}). Since \(U\) contains the special fiber \(X \defeq \mcalX \times_{W(k)} k\), \(U\) should be the whole \(\mcalX\) and thus \(\widetilde{F}_{\mcalX}\) is well-defined.

    Taking the special fiber, the Frobenius map on \(R/pR\) induces the Frobenius morphism \(F_X \colon X \to X\) of a smooth \(k\)-scheme \(X\) (through the Frobenius map on \(k\)) and in particular this is a finite morphism.
    This says that the restriction of \(\widetilde{F}_{\mcalX} \colon \mcalX \to \mcalX\) to \(X\) has \(0\)-dimensional fibers and this property extends to \(\mcalX\). Consequently, \(\widetilde{F}_{\mcalX}\) is a finite morphism (\citeSta{02OG}) and thus \(\widetilde{F}_{\mcalX}\) is flat by miracle flatness (\citeSta{00R4}).
    In particular, \(\widetilde{F}_{\mcalX}\) is finite locally free by \cite[Proposition 12.19]{gortz2020Algebraic}.
    By \cite[Proposition 13.37 (2)]{gortz2020Algebraic}, we have \(R_f \cong R_{(f)}[T, T^{-1}]\) for any homogeneous element \(f \in R_1\). Since \(\widetilde{F}_{\mcalX}\) induces a finite free map \(R_{(f)} \to R_{(\varphi(f))}\) between regular rings for some \(f \in R_1\), the generic rank \(\deg \varphi\) is the same as the rank of the finite free map \(R_f \to R_{\varphi(f)}\) induced from \(\varphi\). This is the same as the rank of the Frobenius map \((R/pR)_f \xrightarrow{F} (R/pR)_f\) since \(D_+(f) \cap \mcalX_{p=0} \neq \emptyset\).
    Therefore, the generic degree \(\deg \varphi\) is the \(p\)-power \([K(X)^{1/p}: K(X)]\).
\end{proof}

\section{Examples} \label{sec:examples}

We present some examples of perfectoid towers generated from prisms.
While the known examples do not arise from mild singularities such as (log-)regularity, the first term of the following examples are not necessarily log-regular.
Before giving new examples, we reconstruct the previous examples.

\subsection{Previous examples of perfectoid towers}

These examples were calculated separately in \cite{ishiro2025Perfectoid}, but now can be treated in a unified and simpler fashion using our first main theorem (\Cref{PerfectoidTowerPrism}). The first two examples are generalized in \Cref{ExampleDeltaStabilization} later.
Note that the first two examples are only examples of Noetherian perfectoid towers previously known in \cite{ishiro2025Perfectoid} and these are perfectoid towers arising from Cohen-Macaulay normal domains.

\begin{example}[Regular rings; {\cite[Example 3.62 (1)]{ishiro2025Perfectoid}}] \label{RegularExample}
    Any complete regular local ring \(R\) of dimension \(d\) with residue field \(k\) of characteristic \(p > 0\) can be represented as \(A/I\) for some complete regular local prism \((A, I)\) (see \cite[Corollary 3.8]{ishizuka2026Prismatic}).
    In this case, \(A \cong C(k)[|T_1, \dots, T_d|]\) and \(I = (p-f)\), where \(C \defeq C(k)\) is the Cohen ring of \(k\) equipped with a \(\delta\)-ring structure and \(f \in (T_1, \dots, T_n)\) (see \cite[Lemma 2.6 and Lemma 5.1]{ishizuka2026Prismatic}).
    We denote the colimit \(C^{1/p^i} \defeq \colim \{C \xrightarrow{\varphi} C \xrightarrow{\varphi} \cdots \xrightarrow{\varphi} C\}\) consisting of \(i+1\) terms. It gives extensions of integral domains \(C \hookrightarrow C^{1/p} \hookrightarrow C^{1/p^2} \hookrightarrow \).
    Applying \Cref{PerfectoidTowerPrism}, we obtain a perfectoid tower
    \begin{equation*}
        R \cong C[|T_1, \dots, T_d|]/(p-f) \xrightarrow{t_{1, I}} \cdots \xrightarrow{t_{i, I}} C^{1/p^i}[|T_1^{1/p^i}, \dots, T_d^{1/p^i}|]/(p-f) \xrightarrow{t_{i+1, I}} \cdots
    \end{equation*}
    whose transition maps are injective and its tilt is isomorphic to the perfect tower
    \begin{equation*}
        k[T_1, \dots, T_d] \hookrightarrow k^{1/p}[|T_1^{1/p}, \dots, T_d^{1/p}|] \hookrightarrow \cdots \hookrightarrow k^{1/p^i}[|T_1^{1/p^i}, \dots, T_d^{1/p^i}|] \hookrightarrow \cdots.
    \end{equation*}
    If \(f = T_d\), the quotient \(R \cong A/I\) is \(C[|T_1, \dots, T_{d-1}|]\) and the above perfectoid tower is the same as the perfectoid tower generated from the \(\delta\)-ring \(C[|T_1, \dots, T_{d-1}|]\) (\Cref{ExplicitPerfectoidTowerDeltaRing}).
\end{example}

\begin{example}[Local log-regular rings; {\cite[\S 3.6]{ishiro2025Perfectoid}}] \label{LogRegularExample}
    More generally, our construction covers the case of complete local log-regular rings:
    let \(C\) be the Cohen ring of a field \(k\) of positive characteristic \(p\) and let \(\mcalQ\) be a fine sharp saturated monoid.
    Fix a \(\delta\)-ring structure of \(C\) as above.
    Then the complete Noetherian local domain \(C[|\mcalQ|]\) has a \(\delta\)-structure given by the Frobenius lift \(e^q \mapsto (e^q)^p\) and \((C[|\mcalQ|], (p-f))\) becomes an orientable prism for any \(f \in C[|\mcalQ|]\) which has no non-zero constant term (or \(f = 0\)) as above.
    By Kato's structure theorem, any complete local log-regular ring \((R, \mcalQ, \alpha)\) of residue characteristic \(p\) can be represented as \(C[|\mcalQ|]/(p-f)\) for some \(f \in C[|\mcalQ|]\) which has no non-zero constant term (see, for example, \cite[Theorem 2.22]{ishiro2025Perfectoid}).
    
    Since \(C[|\mcalQ|]/(p-f)\) is a complete local log-regular ring, \((C[|\mcalQ|], (p-f))\) is transversal and \(k[|\mcalQ|]\) is \(p\)-root closed in \(k[|\mcalQ|][1/\overline{f}]\).
    Then by \Cref{PerfectoidTowerPrism}, the tower
    \begin{equation*}
        C[|\mcalQ|]/(p-f) \hookrightarrow C^{1/p}[|\mcalQ^{(1)}|]/(p-f) \hookrightarrow \cdots \hookrightarrow C^{1/p^i}[|\mcalQ^{(i)}|]/(p-f) \hookrightarrow \cdots
    \end{equation*}
    is a perfectoid tower arising from \((C[|\mcalQ|]/(p-f), (p))\) and those transition maps are injective.
    Its tilt is
    \begin{equation*}
        k[|\mcalQ|] \hookrightarrow k[|\mcalQ^{(1)}|] \hookrightarrow \cdots
    \end{equation*}
    since the Frobenius map on \(k[|\mcalQ|]\) is compatible with the inclusion \(k[|\mcalQ|] \hookrightarrow k[|\mcalQ^{(1)}|]\).
    The resulting perfectoid tower and its tilt are the same as those in \cite[Proposition 3.58, Lemma 3.59, Theorem 3.61, and Example 3.62]{ishiro2025Perfectoid}.

\end{example}

The above examples appear in commutative ring theory and the following examples are related to arithmetic geometry, in particular, prismatic theory.

\begin{example}[Perfectoid rings; {\cite[Theorem 3.10]{bhatt2022Prismsa}}]
    Let \(R\) be a \(p\)-torsion-free perfectoid ring.
    Then there exists a unique transversal perfect prism \((A, (\xi))\) such that \(R \cong A/(\xi)\).
    The assumption of \Cref{PerfectoidTowerPrism} is satisfied because of the \(p\)-torsion-free property of \(A\) and perfectness of \(A/pA\).
    Since the canonical map \(\map{c_0^i}{A}{A^{1/p^i}}\) is isomorphism, we have \(A^{1/p^i}/IA^{1/p^i} \cong A/I\). By \Cref{PerfectoidTowerPrism}, the following tower is a perfectoid tower
    \begin{equation*}
        R \cong A/I \xrightarrow{\id} A/I \xrightarrow{\id} \cdots \xrightarrow{\id} A/I \xrightarrow{\id} \cdots
    \end{equation*}
    and its tilt is
    \begin{equation*}
        R^\flat \cong A/pA \xrightarrow{F} A/pA \xrightarrow{F} \cdots \xrightarrow{F} A/pA \xrightarrow{F} \cdots.
    \end{equation*}
    This is the case of the perfectoid tower from (\(p\)-torsion-free) perfectoid rings \cite[Example 3.53]{ishiro2025Perfectoid}.
\end{example}


\subsection{Examples from geometric Frobenius lifts}

The next example is generated from a more geometric methods, namely, the Frobenius lift on an Abelian variety. That tower is an example that the generic rank of those transition maps are \(p\)-power.
More general theory of Frobenius lifts on smooth projective varieties and its relation to perfectoid towers are developed in \cite{ishizuka2025Quasi, ishiro2025drings}.
This is one of the examples of perfectoid towers arising from non-Cohen-Macaulay normal domains, which does not appear in \cite{ishiro2025Perfectoid}.

\begin{example} \label{ExampleCone}
    Let \(A\) be an ordinary Abelian variety over a perfect field \(k\) of characteristic \(p\) and \(L\) be an ample line bundle on \(A\).
    Then we can take the canonical lift \(\mcalA\) and an ample line bundle \(\mcalL\) on \(\mcalA\) such that the ring of sections \(R(\mcalA, \mcalL) \defeq \oplus_{m \geq 0} H^0(\mcalA, \mcalL^{\otimes m})\) is a normal domain but not Cohen-Macaulay (as in \cite[Lemma 4.11]{kawakami2024Frobenius} and \cite[Example 7.7]{bhatt2024Perfectoid}).
    Then this ring \(R(\mcalA, \mcalL)\) satisfies the condition \Cref{ExplicitPerfectoidTowerDeltaRing} and \Cref{ExampleGraded}. So we have a perfectoid tower
    \begin{equation*}
        R(\mcalA, \mcalL) \hookrightarrow R(\mcalA, \mcalL)^{1/p}[p^{1/p}] \hookrightarrow \cdots \hookrightarrow R(\mcalA, \mcalL)^{1/p^i}[p^{1/p^i}] \hookrightarrow \cdots
    \end{equation*}
    arising from \((R(\mcalA, \mcalL), (p))\) and the generic rank of transition maps are \(p[K(A)^{1/p}:K(A)]\) which is \(p\)-power.
    Its tilt is
    \begin{equation*}
        R(A, L)[|T|] \hookrightarrow R(A, L)^{1/p}[|T^{1/p}|] \hookrightarrow \cdots \hookrightarrow R(A, L)^{1/p^i}[|T^{1/p^i}|] \hookrightarrow \cdots.
    \end{equation*}
    In particular, we have an inequality
    \begin{equation*}
        \abs{H^i(\Spec(R(\mcalA, \mcalL))_{\et}, \setZ/\ell^n\setZ)} \leq \abs{H^i(\Spec(R(A, L)[|T|])_{\et}, \setZ/\ell^n\setZ)}
    \end{equation*}
    by \cite[Proposition 4.7]{ishiro2025Perfectoid}.
\end{example}

\subsection{Examples from affine semigroups}

The next example is generated from an affine semigroup ring following \Cref{ExplicitPerfectoidTowerDeltaRing}. This is based on examples of local log-regular rings in \Cref{LogRegularExample}.
This construction gives a lot of examples of perfectoid towers arising from integral domains and therefore this is one way to construct perfectoid towers easily (but this is closely related to local log-regular rings).

\begin{proposition}[{e.g., \cite[Definition 7.1]{miller2005Combinatorial}}] \label{ExampleSemigroup}
    Let \(\mathbf{a}_1, \dots, \mathbf{a}_r\) be a set of elements of \(\setZ_{\geq 0}^n\) for some \(n > 0\).
    Let \(H\) be a submonoid of \(\setZ_{\geq 0}^n\) generated by \(\mathbf{a}_1, \dots, \mathbf{a}_r\).
    Then the affine semigroup ring \(\setZ_p[H]\) is a \(\setZ_p\)-subalgebra of a polynomial ring \(\setZ_p[t_1, \dots, t_n]\) which is generated by \(\mathbf{t}^{\mathbf{a}_1}, \dots, \mathbf{t}^{\mathbf{a}_r}\) as a \(\setZ_p\)-algebra.
    This is a \(p\)-torsion-free finitely generated \(\setZ_p\)-algebra and the formula \(\mathbf{t}^h \mapsto \mathbf{t}^{ph}\) extends to a Frobenius-lift of the \(\setZ_p\)-algebra \(\setZ_p[H]\).
    Then the \((p, \mathbf{t}^{\mathbf{a}_1}, \dots, \mathbf{t}^{\mathbf{a}_r})\)-adic completion\footnote{See \cite[Lemma 8.15]{miller2005Combinatorial} for an explicit representation of \(\setZ_p[|H|]\).} \(\setZ_p[|H|]\) of \(\setZ_p[H]\) satisfies the assumption of \Cref{ExplicitPerfectoidTowerDeltaRing}. So we have a perfectoid tower
    \begin{equation*}
        \setZ_p[|H|] \hookrightarrow \setZ_p[p^{1/p}][|H^{1/p}|] \hookrightarrow \cdots \hookrightarrow \setZ_p[p^{1/p^i}][|H^{1/p^i}|] \hookrightarrow \cdots
    \end{equation*}
    arising from \((\setZ_p[|H|], (p))\) where \(H^{1/p^i}\) is the submonoid of \((1/p^i \cdot \setZ_{\geq 0})^n\) generated by \(1/p^i \cdot \mathbf{a}_1, \dots, 1/p^i \cdot \mathbf{a}_r\).
    Its tilt is
    \begin{equation*}
        \setF_p[|H|][|T|] \hookrightarrow \setF_p[|H^{1/p}|][|T|] \hookrightarrow \cdots \hookrightarrow \setF_p[|H^{1/p^i}|][|T|] \hookrightarrow \cdots
    \end{equation*}
    where \(T\) is a new variable.
\end{proposition}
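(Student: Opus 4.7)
The plan is to verify the hypotheses of \Cref{ExplicitPerfectoidTowerDeltaRing} for $\setZ_p[|H|]$ and then unwind its conclusion into the concrete form stated.

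The first task is to set up the $\delta$-structure. On the $\setZ_p$-basis $\{\mathbf{t}^h\}_{h \in H}$ of $\setZ_p[H]$ (free as a $\setZ_p$-module since $H \subseteq \setZ_{\geq 0}^n$ is torsion-free), the rule $\mathbf{t}^h \mapsto \mathbf{t}^{ph}$ extends $\setZ_p$-linearly to a ring endomorphism $\varphi_R$; multiplicativity is immediate from $\mathbf{t}^{p(h+h')} = \mathbf{t}^{ph}\mathbf{t}^{ph'}$, and the congruence $\varphi_R(\mathbf{t}^h) \equiv (\mathbf{t}^h)^p \pmod{p}$ shows it is a lift of Frobenius. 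Since $\setZ_p[H] \hookrightarrow \setZ_p[t_1, \dots, t_n]$ is $p$-torsion-free, the lift uniquely determines a $\delta$-structure via $\delta(x) = (\varphi_R(x) - x^p)/p$. Because $\varphi_R$ sends each generator $\mathbf{t}^{\mathbf{a}_j}$ into the defining ideal $J \defeq (p, \mathbf{t}^{\mathbf{a}_1}, \dots, \mathbf{t}^{\mathbf{a}_r})$, it is $J$-adically continuous and extends uniquely to a $\delta$-structure on the $J$-adic completion $\setZ_p[|H|]$.

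The second task is to check the remaining hypotheses. Noetherianness of $\setZ_p[H]$ yields flatness of $\setZ_p[|H|]$ over it, preserving $p$-torsion-freeness, and $p$-adic completeness gives the $p$-Zariskian condition. For reducedness of $\setZ_p[|H|]/p \cong \setF_p[|H|]$, I use the explicit description of Miller--Sturmfels: $\setF_p[|H|]$ identifies with the set of formal sums $\sum_{h \in H} c_h \mathbf{t}^h$ under convolution, and the monomial embedding $H \hookrightarrow \setZ_{\geq 0}^n$ realizes it as a subring of the domain $\setF_p[|t_1, \dots, t_n|]$, forcing $\setF_p[|H|]$ itself to be a domain.

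The third task is to identify the terms of the perfectoid tower produced by \Cref{ExplicitPerfectoidTowerDeltaRing}, whose $i$-th term is $\setZ_p[|H|]^{1/p^i} \otimes_\setZ \setZ[p^{1/p^i}]$. The key point is to establish an $R$-algebra isomorphism $R^{1/p^i} \cong \setZ_p[|H^{1/p^i}|]$ for $R = \setZ_p[|H|]$: composing the abstract ring isomorphism $R \xrightarrow{\cong} \setZ_p[|H^{1/p^i}|]$ sending $\mathbf{t}^h \mapsto \mathbf{t}^{h/p^i}$ with the isomorphism $c_i^i \colon R \xrightarrow{\cong} R^{1/p^i}$ yields an $R$-algebra iso under which the structure map $c_0^i = \varphi_R^i$ corresponds to the natural inclusion $H \hookrightarrow H^{1/p^i}$. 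Tensoring with $\setZ[p^{1/p^i}]$ gives the claimed term $\setZ_p[p^{1/p^i}][|H^{1/p^i}|]$, and the tilt follows from the corresponding conclusion of \Cref{ExplicitPerfectoidTowerDeltaRing} after the same monoid-level renaming is applied at each level. The main technical obstacle is the identification $R^{1/p^i} \cong \setZ_p[|H^{1/p^i}|]$: one must verify that the finite colimit in the category of $p$-Zariskian $\delta$-rings defining $R^{1/p^i}$ is computed correctly as a completed semigroup ring. This reduces to checking that the $J$-adic topology on the non-completed semigroup ring $\setZ_p[H^{1/p^i}]$ agrees with the topology induced from the colimit structure, which can be done by direct inspection using Noetherianness.
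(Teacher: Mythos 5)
Your proposal is correct and follows essentially the same path as the paper: verify the hypotheses of \Cref{ExplicitPerfectoidTowerDeltaRing} for the completed semigroup ring $\setZ_p[|H|]$ (the $\delta$-structure from $\mathbf{t}^h \mapsto \mathbf{t}^{ph}$, $p$-torsion-freeness, $p$-adic completeness, reducedness of $\setF_p[|H|]$ via the Miller--Sturmfels embedding into $\setF_p[|t_1,\dots,t_n|]$), then invoke the corollary and rewrite the terms via the monoid renaming $H \cong H^{1/p^i}$.

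One small correction: what you call the ``main technical obstacle,'' namely identifying $R^{1/p^i}$ with a completed semigroup ring, is not an obstacle at all --- the finite colimit $R^{1/p^i}=\colim\{R\xrightarrow{\varphi_R}\cdots\xrightarrow{\varphi_R}R\}$ is already computed by \Cref{IsomorphicColimits}, which says $c_i^i\colon R\to R^{1/p^i}$ is an isomorphism of rings with the $R$-algebra structure given by $\varphi_R^i$. So no topological comparison is needed; the identification $R^{1/p^i}\cong\setZ_p[|H^{1/p^i}|]$ is exactly the abstract iso $\mathbf{t}^h\mapsto\mathbf{t}^{h/p^i}$ precomposed with $c_i^i$, as you indeed wrote down. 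Also worth noting explicitly: to present the $i$-th term as $\setZ_p[p^{1/p^i}][|H^{1/p^i}|]$ inside a fixed $R^+$ (the second form of the tower in \Cref{ExplicitPerfectoidTowerDeltaRing}), one also needs finiteness of $\varphi_R$, which holds since each generator $\mathbf{t}^{\mathbf{a}_j}$ satisfies the integral equation $X^p-\mathbf{t}^{p\mathbf{a}_j}=0$ over $\varphi_R(\setZ_p[|H|])$; you implicitly use this but it deserves a sentence.
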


To analyze some ring properties of affine semigroup rings over \(\setZ_p[H]\) not only over a field, we record the following lemma.

\begin{lemma} \label{SemigroupRingProperties}
    Let \(H\) be a submonoid of \(\setZ_{\geq 0}^n\) generated by \(\mathbf{a}_1, \dots, \mathbf{a}_r\) for some \(n > 0\).
    Assume that \(H\) is simplicial, namely, the convex rational polyhedral cone spanned by \(\mathbf{a}_1, \dots, \mathbf{a}_r\) in \(\setQ^n\) is spanned by \(r\)-elements in \(H\) where \(r\) is \(\rank_{\setZ} H \setZ^n\).
    \begin{enumerate}
        \item If \(\setF_{\ell}[H]\) is Cohen-Macaulay (resp., Gorenstein) for a prime \(\ell\), then \(\setZ_p[H]\) and \(\setZ_p[|H|]\) are Cohen-Macaulay (resp., Gorenstein) for any prime \(p\).
        \item If \(\setF_{\ell}[H]\) is normal for a prime \(\ell\), then \(\setZ_p[H]\) and \(\setZ_p[|H|]\) are normal and Cohen-Macaulay for any prime \(p\). Moreover, \(\setZ_p[|H|]\) with \(H \hookrightarrow \setZ_p[|H|]\) is a local log-regular ring.
    \end{enumerate}
    In particular, such properties can be tested in the case of \(\ell = 2\) only.
\end{lemma}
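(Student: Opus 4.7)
The plan is to reduce both assertions to a characteristic-independence principle for simplicial affine semigroup rings and then propagate the conclusion via flat base change and completion. The central input is that, when $H$ is simplicial, whether $k[H]$ is normal, Cohen--Macaulay, or Gorenstein depends only on the combinatorial structure of $H$ and not on the ground field $k$: normality of $k[H]$ is equivalent to $H$ being saturated, the Cohen--Macaulay property for simplicial $H$ is governed by the combinatorial criterion of Trung--Hoa, and the Gorenstein property is cut out by a further symmetry condition on the canonical module in the style of Stanley. Consequently, once one knows the property for $\setF_\ell[H]$ for a single prime $\ell$, the same holds for every field $k$, including $\setF_p$ and $\setQ_p$.

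Next I would transfer the property from $k[H]$ to $\setZ_p[H]$. Because $\setZ[H]$ is a free $\setZ$-module with basis $\{\mathbf{t}^h\}_{h \in H}$, the ring $\setZ_p[H] = \setZ_p \otimes_{\setZ} \setZ[H]$ is flat over $\setZ_p$ and $p$ is a non-zero-divisor with quotient $\setF_p[H]$. Localizing at a maximal ideal $m$: when $p \in m$, the standard principle that $A$ is Cohen--Macaulay (resp.\ Gorenstein) if and only if $A/xA$ is so for an $A$-regular element $x$ reduces the claim to the known statement for $\setF_p[H]_{\overline{m}}$; when $p \notin m$, the localization factors through $\setQ_p[H]$, whose Cohen--Macaulay / Gorenstein property follows from the same characteristic-independent combinatorial criterion applied with $k = \setQ_p$. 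Normality of $\setZ_p[H]$ is then verified via Serre's $(R_1) + (S_2)$ criterion, using Hochster's theorem that normal affine semigroup rings are Cohen--Macaulay over every field to secure $(S_2)$.

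Passing to the $(p, \mathbf{t}^{\mathbf{a}_1}, \dots, \mathbf{t}^{\mathbf{a}_r})$-adic completion $\setZ_p[|H|]$ preserves Cohen--Macaulayness and Gorensteinness by standard completion results at a maximal ideal, and preserves normality since any finitely generated $\setZ_p$-algebra is excellent. For the final log-regularity claim I would verify Kato's criteria directly for the triple $(\setZ_p[|H|], H, \alpha)$ with $\alpha(h) = \mathbf{t}^h$: the quotient $\setZ_p[|H|]/(\alpha(H \setminus \{0\}))$ is $\setZ_p$, which is regular, and the dimension identity $\dim \setZ_p[|H|] = \dim \setZ_p + \mathrm{rk}\, H^{\mathrm{gp}}$ holds by the simpliciality of $H$ together with the Cohen--Macaulayness just established. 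Since $H \subseteq \setZ_{\geq 0}^n$ is fine, sharp, and, under the normality hypothesis, also saturated, the triple meets Kato's definition. The main obstacle is confirming the characteristic-independence input of the first paragraph in the required generality; once this is cited, the remaining steps are routine base-change and completion arguments.
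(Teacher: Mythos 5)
Your proposal follows the same high-level plan as the paper — reduce to characteristic-independence for simplicial semigroup rings, then propagate the property to $\setZ_p[H]$ and its completion — so the overall strategy is sound. The individual steps, however, use different tools than the paper does, and two of those differences are worth noting.

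First, for the Cohen--Macaulay / Gorenstein characteristic-independence in (a), the paper cites Goto--Suzuki--Watanabe's result for simplicial affine semigroups directly, which asserts exactly that these properties depend only on the monoid. You instead invoke the Trung--Hoa criterion and a Stanley-type symmetry condition. Be careful here: the Trung--Hoa criterion for Cohen--Macaulayness of \emph{general} affine semigroup rings is not characteristic-independent (their criterion involves local cohomology and the property genuinely depends on $\mathrm{char}\,k$ for non-simplicial $H$). It is only because $H$ is simplicial that characteristic-independence holds, and Goto--Suzuki--Watanabe is the cleanest reference for that. So your citation is somewhat misplaced even if the conclusion you need is true.

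Second, and more substantively, for normality of $\setZ_p[H]$ in (b) the paper uses a short, elementary intersection argument: $\setZ_p[H] = \setZ_p[\underline{t}] \cap \setQ_p[H]$ inside $\setQ_p[\underline{t}]$, together with the facts that $\setZ_p[\underline{t}]$ is integrally closed in $\setQ_p[\underline{t}]$ and that $\setQ_p[H]$ is normal. This gives normality of $\setZ_p[H]$ in one line without invoking Serre's criterion. Your proposed route through $(R_1)+(S_2)$ works but requires more bookkeeping: $(S_2)$ is secured via part (a) (Hochster gives CM of $\setF_\ell[H]$, then (a) lifts CM to $\setZ_p[H]$), and $(R_1)$ requires a case split over height-one primes containing or not containing $p$ — in the former case one must observe that $p\setZ_p[H]$ is the unique such prime (since $\setF_p[H]$ is a domain) and that the localization there is a DVR; you did not spell this out. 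Your use of excellence for preservation of normality under completion is a valid alternative to the Zariski reference the paper uses. The log-regularity verification matches the paper. In short: your argument is correct modulo tightening the $(R_1)$ step and adjusting the reference for the simplicial characteristic-independence input, but the paper's intersection argument for normality is more economical and avoids the dependency on (a) entirely.
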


\begin{proof}
    (1): Since Cohen-Macaulayness and Gorensteinness are stable under the quotient by a non-zero-divisor and the completion, it is enough to check such properties for the affine semigroup ring \(\setF_p[H]\) over \(\setF_p\).
    By \cite[Theorem (1) and (2)]{goto1976Affine}, such properties only depend on the simplicial monoid \(H\) and this proves (1).

    (2): By \cite[Proposition 7.25]{miller2005Combinatorial}, the normality of an affine semigroup ring \(k[H]\) over a field \(k\) depends only on the monoid \(H\). So the assumption implies that \(\setQ_p[H]\) is normal (in other words, the monoid \(H\) is normal).
    We know that the polynomial ring \(\setZ_p[\underline{t}]\) is integrally closed in \(\setQ_p[\underline{t}]\) and the equation \(\setZ_p[H] = \setZ_p[\underline{t}] \cap \setQ_p[H]\) holds.
    The normality of \(\setQ_p[H]\) leads to the normality of \(\setZ_p[H]\).
    Moreover, in this case, the normality of \(\setQ_p[H]\) is stable under the completion due to \cite[Th\'eor\`eme 2]{zariski1950Normalite}. So a similar argument shows that \(\setZ_p[|H|]\) is normal.

    Using \cite[Theorem 1]{hochster1972Rings} (see also \cite[Corollary 13.43]{miller2005Combinatorial}), the normality of \(H\) implies that \(\setZ_p[H]\) and \(\setZ_p[|H|]\) are Cohen-Macaulay.
    The log-regularity follows from the definition of local log-regular rings: Normal affine semigroup \(H\) is fine, sharp, and saturated.
\end{proof}

By using this \Cref{SemigroupRingProperties} and a computer algebra system such as \cite{Macaulay2}, we can give the following examples: The first one is non-Cohen-Macaulay and the second one is Cohen-Macaulay but not normal.

\begin{example} \label{ExampleSemigroupnonCM}
    Take an affine semigroup ring \(\setZ_p[s, st, st^3, st^4]\) whose Frobenius lift is induced from \(s \mapsto s^p\), \(t \mapsto t^p\).
    This is a non-Cohen-Macaulay integral domain of dimension \(3\) for any prime \(p\) and so is the completion \(\setZ_p[|s, st, st^3, st^4|]\) with respect to the prime ideal \((s, st, st^3, st^4)\). By \Cref{ExampleSemigroup}, we have a perfectoid tower
    \begin{align*}
        \setZ_p[|s, st, st^3, st^4|] & \hookrightarrow \setZ_p[p^{1/p}][|s^{1/p}, s^{1/p}t^{1/p}, s^{1/p}t^{3/p}, s^{1/p}t^{4/p}|] \hookrightarrow \\
        & \cdots \hookrightarrow \setZ_p[p^{1/p^i}][|s^{1/p^i}, s^{1/p^i}t^{1/p^i}, s^{1/p^i}t^{3/p^i}, s^{1/p^i}t^{4/p^i}|] \hookrightarrow \cdots
    \end{align*}
    arising from \((\setZ_p[|s, st, st^3, st^4|], (p))\) whose first term \(\setZ_p[|s, st, st^3, st^4|]\) is a non-Cohen-Macaulay and non-normal complete local domain of dimension \(3\).
\end{example}

\begin{example} \label{ExampleSemigroupnonNor}
    Take an affine semigroup ring \(\setZ_p[s^2, s^3]\) whose Frobenius lift is induced from \(s \mapsto s^p\).
    This is a Cohen-Macaulay non-normal domain of dimension \(2\) for any prime \(p\) and so is the completion \(\setZ_p[|s^2, s^3|]\) with respect to the prime ideal \((s^2, s^3)\). By \Cref{ExampleSemigroup}, we have a perfectoid tower
    \begin{equation*}
        \setZ_p[|s^2, s^3|] \hookrightarrow \setZ_p[p^{1/p}][|s^{2/p}, s^{3/p}|] \hookrightarrow \cdots \hookrightarrow \setZ_p[p^{1/p^i}][|s^{2/p^i}, s^{3/p^i}|] \hookrightarrow \cdots
    \end{equation*}
    arising from \((\setZ_p[|s^2, s^3|], (p))\) whose first term \(\setZ_p[|s^2, s^3|]\) is a non-regular non-normal local complete intersection domain of dimension \(2\).
\end{example}


\subsection{Examples from \(\delta\)-stable ideals}

Next, we give examples of Noetherian perfectoid towers arising from \(\delta\)-rings by using \(\delta\)-stable ideals.

First one is a tower of rings generated from a \(\delta\)-stable ideal which generalizes examples arising from regular and log-regular rings (\Cref{RegularExample} and \Cref{LogRegularExample}). Especially, the completion of any Stanley--Reisner ring has a perfectoid tower. This is based on an example (\Cref{SquareFreeExample}) taught to the author by Shinnosuke Ishiro.
One advantage of this construction is that the conditions are easy to check by hand (or using computer algebra systems) and therefore this gives a practical way to construct (a little bit complicated) perfectoid towers.

\begin{proposition}[{\(\delta\)-stable ideals}] \label{ExampleDeltaStabilization}
    Let \(k\) be a perfect field of characteristic \(p\) and set \(W \defeq W(k)\)\footnote{Even if \(W = \setZ\), the conclusions (1) and (2) also hold for the \((p, \underline{T})\)-adic completion \(\setZ_p[|\underline{T}|]/J\).} with the unique Frobenius lift.
    Let \(W[\underline{T}] \defeq W[T_1, \dots, T_n]\) be a polynomial ring over \(W\) and let \(J\) be an ideal of \(W[\underline{T}]\) which is contained in \((p, T_1, \dots, T_n)\).
    Assume that there exists a \(\delta\)-structure on \(W[\underline{T}]\) such that compatible with the Frobenius lift on \(W\) and \(\delta(J) \subseteq J\).\footnote{This \(\delta\)-structure is not necessarily defined as \(\delta(T_i) = 0\). If \(W[\underline{T}]/J\) is \(p\)-torsion-free, the existence of such a \(\delta\)-structure on \(W[\underline{T}]\) is equivalent to the existence of a Frobenius lift on \(W[\underline{T}]/J\). So this relates the Frobenius liftability of a singularity in positive characteristic as mentioned in \Cref{PerfectoidTowerSingularity}.}
    Then \(W[\underline{T}]/J\) itself, its \(T\)-adic completion \(W[|\underline{T}|]/J\), and its localization \(W[\underline{T}]_{(p, \underline{T})}/J\) with respect to the maximal ideal \((p, \underline{T}) \subseteq W[\underline{T}]\) have unique \(\delta\)-\(W[\underline{T}]\)-algebra structures (\cite[Lemma 2.9 and Lemma 2.15]{bhatt2022Prismsa}).
    Take a distinguished element \(d\) in the \(\delta\)-ring \(W[\underline{T}]\) and fix a generator \(J = (f_1, \dots, f_r)\). Then we have the following.
    \begin{enumerate}
        \item If \(p, d\) is a regular sequence on \(W[\underline{T}]/J\) and the \(p\)-th power map \(W[\underline{T}]/(J, p, d) \xrightarrow{a \mapsto a^p} W[\underline{T}]/(J, p, d^p)\) is injective, then there exists a perfectoid tower arising from \((W[|\underline{T}|]/(J, d), (p))\) with injective transition maps and its tilt is isomorphic to the perfect tower arising from \((k[|\underline{T}|]/J, (p))\).
        \item If \(W[\underline{T}]/J\) is \(p\)-torsion-free and \(W[\underline{T}]/(p, J)\) is reduced, then there exists a perfectoid tower arising from \((W[|\underline{T}|]/J, (p))\) (resp., \((W[\underline{T}]_{(p, \underline{T})}/J, (p))\)) with injective transition maps and their tilts are isomorphic to the perfect tower arising from \((k[|\underline{T}, T'|]/J, (p))\) where \(T'\) is a new variable. If moreover \(W[|\underline{T}|]/J\) (resp., \(W[\underline{T}]_{(p, \underline{T})}/J\)) is a normal domain and \(W[|\underline{T}|]/(p, J)\) (resp., \(W[\underline{T}]_{(\underline{T})}/(p, J)\)) is an integral domain, then the generic extension of those transition maps have \(p\)-power degree.
    \end{enumerate}
\end{proposition}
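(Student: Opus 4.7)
I would produce an orientable bounded prism \((A, (d))\) with \(A \defeq W[|\underline{T}|]/J\) and invoke \Cref{PerfectoidTowerPrism}. The \(\delta\)-structure on \(W[\underline{T}]\) descends to \(W[\underline{T}]/J\) thanks to \(\delta(J) \subseteq J\) and then extends uniquely to the \((p,\underline{T})\)-adic completion \(A\) by \cite[Lemma 2.15]{bhatt2022Prismsa} (the Frobenius lift preserves \((p,\underline{T})\) since \(\varphi(T_i) = T_i^p + p\delta(T_i) \in (p,\underline{T})\)). The element \(d\) remains distinguished in \(A\) as \(\delta(d) \in W^\times\), and this gives \(p \in (d, \varphi(d))A\); moreover \(A\) is \((p,d)\)-adically complete because it is \((p,\underline{T})\)-adically complete Noetherian with \(d \in (p,\underline{T})\). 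Hence \((A, (d))\) is an orientable bounded prism. Next I would verify the two hypotheses of \Cref{PerfectoidTowerPrism}: regularity of \(p, d\) on \(A\) follows from the regularity on \(W[\underline{T}]/J\) by flatness of completion on a Noetherian ring; and the \(p\)-root closedness of \(A/pA\) in \(A/pA[1/d]\) is, by the proof of \Cref{PerfectoidTowerPrism}, equivalent to the injectivity of \(A/(p,d) \to A/(p, d^p)\) by \(x \mapsto x^p\). Since \(J \subseteq (p, \underline{T})\), the two quotients agree with the \(\underline{T}\)-adic completions of \(W[\underline{T}]/(J, p, d)\) and \(W[\underline{T}]/(J, p, d^p)\), so the assumed injectivity over \(W[\underline{T}]/J\) transfers. \Cref{PerfectoidTowerPrism} then gives the perfectoid tower from \((A/dA, (p)) = (W[|\underline{T}|]/(J, d), (p))\) with injective transitions (as \(A\) is derived \(p\)-complete), and its tilt is \(((A/pA)^{\wedge_d}, F)\), which coincides with the perfect tower on \(k[|\underline{T}|]/J\) since the latter is a Noetherian complete local ring already complete along the element \(d\) of its maximal ideal.

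\textbf{Plan for part (b).} I would apply \Cref{ExplicitPerfectoidTowerDeltaRing} to \(R\) equal to \(W[|\underline{T}|]/J\) or \(W[\underline{T}]_{(p,\underline{T})}/J\). In each case \(R\) carries a unique \(\delta\)-structure inherited from \(W[\underline{T}]\) (by \(\delta(J)\subseteq J\) together with \cite[Lemma 2.9 and Lemma 2.15]{bhatt2022Prismsa}); \(R\) is \(p\)-torsion-free by hypothesis; \(R\) is \(p\)-Zariskian (being either \(p\)-adically complete, or local with \(p\) in the maximal ideal); and \(R/pR\) is reduced because it is a \(\underline{T}\)-adic completion (resp.\ \((\underline{T})\)-localization) of the assumed reduced ring \(W[\underline{T}]/(p,J) = k[\underline{T}]/\overline{J}\), which is excellent since \(k\) is perfect. \Cref{ExplicitPerfectoidTowerDeltaRing} then yields the desired perfectoid tower from \((R,(p))\) with injective transition maps whose tilt is \((R/pR[|T'|], F)\), i.e.\ (after the identification \(R/pR \cong k[|\underline{T}|]/J\) and the further \(T'\)-adic completion built into the tilt construction) the perfect tower from \((k[|\underline{T}, T'|]/J, (p))\). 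For the final generic-rank clause I would first show that \(\varphi_R\) is finite: \(R/pR\) is \(F\)-finite as a quotient of the \(F\)-finite ring \(k[|\underline{T}|]\) (resp.\ \(k[\underline{T}]_{(\underline{T})}\)) with \(k\) perfect, and a topological Nakayama argument using \(p\)-adic separatedness and \(p\)-Zariskianness lifts a finite generating set of \(R/pR\) over \(F(R/pR)\) to a finite generating set of \(R\) over \(\varphi_R(R)\). Then \Cref{GenericDegreeDeltaRingNormal} produces a \(p\)-power generic rank for \(\varphi_R\), and \Cref{GenericDegreeDeltaRing} multiplies this by \(p\) to give the generic rank of each transition map.

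\textbf{Main obstacle.} The principal difficulty lies in transferring all the algebraic hypotheses stated at the level of the polynomial ring \(W[\underline{T}]/J\) to the completed or Zariskized ring that actually enters \Cref{PerfectoidTowerPrism} or \Cref{ExplicitPerfectoidTowerDeltaRing}; the most delicate of these is the injectivity of the \(p\)-th power map in (a), since it must survive \(\underline{T}\)-adic completion of non-flat quotients. A secondary obstacle is the finiteness of \(\varphi_R\) used for the generic-rank conclusion, which is the reason that clause requires the extra normal-domain hypothesis and an explicit Nakayama-style lift from \(F\)-finiteness of the special fiber.
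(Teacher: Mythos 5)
Your proposal matches the paper's proof closely: part (a) verifies the hypotheses of \Cref{PerfectoidTowerPrism} for the bounded prism \((W[|\underline{T}|]/J, (d))\), transferring the regularity and \(p\)-root-closedness across the \(\underline{T}\)-adic completion exactly as the paper does (the paper phrases the transfer via faithful flatness of completion, you phrase it via agreement of quotients — same mechanism), and part (b) verifies the hypotheses of \Cref{ExplicitPerfectoidTowerDeltaRing}, using the analytic-unramifiedness/excellence of finitely generated \(k\)-algebras for reducedness mod \(p\) and then invoking \Cref{GenericDegreeDeltaRingNormal} together with \Cref{GenericDegreeDeltaRing} for the generic-degree clause. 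The only real divergence is that you spell out the finiteness of \(\varphi_R\) (via \(F\)-finiteness of the special fiber plus a complete-Nakayama lift), which the paper leaves implicit; this is a welcome addition since both degree lemmas take finiteness of \(\varphi_R\) as a hypothesis.
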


\begin{proof}
    In (1), we have a bounded prism \((W[|\underline{T}|]/J, (d))\). Taking the \((\underline{T})\)-adic completion, \(p, d\) is also a regular sequence on \(W[|\underline{T}|]/J\). To apply \Cref{PerfectoidTowerPrism}, we need to show that \(W[|\underline{T}|]/(p, J)\) is \(p\)-root closed in \(W[|\underline{T}|]/(p, J)[1/f]\).
    As in the proof of \Cref{PerfectoidTowerPrism}, this is equivalent to showing that the \(p\)-th power map \(W[|\underline{T}|]/(J, p, d) \xrightarrow{a \mapsto a^p} W[|\underline{T}|]/(J, p, d^p)\) is injective. This follows from the assumption and the faithfully flat property of the \((\underline{T})\)-adic completion.

    Next, we show (2). As above, \(W[|\underline{T}|]/J\) (resp., \(W[\underline{T}]_{(p, \underline{T})}/J\)) is \(p\)-torsion-free by taking the \((\underline{T})\)-adic completion (resp., \((p)\)-Zariskization). By using analytically unramified property of finitely generated \(k\)-algebras, \(W[|\underline{T}|]/(p, J)\) is reduced (see, for example, \cite[Theorem 4.6.3, Proposition 9.1.3, and Theorem 9.2.2]{swanson2006Integral}).
    Also, the \(p\)-Zariskization \((1+(p))^{-1}W[\underline{T}]/(p, J) \cong W[\underline{T}]/(p, J)\) is reduced.
    By \Cref{ExplicitPerfectoidTowerDeltaRing}, we have a perfectoid tower arising from \((W[|\underline{T}|]/J, (p))\) (resp., \((W[\underline{T}]_{(p, \underline{T})}/J, (p))\)) with injective transition maps.
    If we assume that \(W[|\underline{T}|]/J\) (resp., \(W[\underline{T}]_{(p, \underline{T})}/J\)) is a normal domain and \(W[|\underline{T}|]/(p, J)\) (resp., \(W[\underline{T}]_{(p, \underline{T})}/(p, J)\)) is an integral domain, then the generic extension of those transition maps have \(p\)-power degree.
\end{proof}

\begin{corollary} \label{CorExampleDeltaStabilization}
    Keep the notation of \Cref{ExampleDeltaStabilization}.
    If \(\delta(T_i) = 0\), then we also have the following.
    \begin{enumerate}
        \item[(1')] The distinguished element \(d\) is written by \(p-f\) for some \(f \in (T_1, \dots, T_n)\). If \(p, f\) is a regular sequence on \(W[\underline{T}]/J\) and the \(p\)-th power map \(W[\underline{T}]/(J, p, f) \xrightarrow{a \mapsto a^p} W[\underline{T}]/(J, p, f^p)\) is injective, then the tower
        \begin{equation}
            W[|\underline{T}|]/(f_1, \dots, f_r, p-f) \hookrightarrow \cdots \hookrightarrow W[|\underline{T}^{1/p^i}|]/(f_1^{1/p^i}, \dots, f_r^{1/p^i}, p-f) \hookrightarrow \label{EqPerfdTowerDeltaStablePrism}
        \end{equation}
        is a perfectoid tower arising from \((W[|\underline{T}|]/(J, p-f), (p))\) and its tilt is isomorphic to the perfect tower
        \begin{equation}
            k[|\underline{T}|]/(f_1, \dots, f_r) \hookrightarrow \cdots \hookrightarrow k[|\underline{T}^{1/p^i}|]/(f_1^{1/p^i}, \dots, f_r^{1/p^i}) \hookrightarrow \cdots \label{TiltDeltStablePrism}
        \end{equation}
        arising from \((k[|\underline{T}|]/J, (p))\).
        \item[(2')] If \(W[\underline{T}]/J\) is \(p\)-torsion-free and \(W[\underline{T}]/(p, J)\) is reduced, then the towers
        \begin{align}
            W[|\underline{T}|]/(f_1, \dots, f_r) & \hookrightarrow \cdots \hookrightarrow W[|\underline{T}^{1/p^i}|][p^{1/p^i}]/(f_1^{1/p^i}, \dots, f_r^{1/p^i}) \hookrightarrow \cdots~\text{and} \label{EqPerfdTowerDeltaStable} \\
            W[\underline{T}]_{(p, \underline{T})}/(f_1, \dots, f_r) & \hookrightarrow \cdots \hookrightarrow W[\underline{T}^{1/p^i}]_{(p, \underline{T}^{1/p^i})}[p^{1/p^i}]/(f_1^{1/p^i}, \dots, f_r^{1/p^i}) \hookrightarrow \cdots \label{EqPerfdTowerDeltaStableZariski}
        \end{align}
        are perfectoid towers arising from \((W[|\underline{T}|]/J, (p))\) and \(W[\underline{T}]_{(p, \underline{T})}/(J, (p))\) respectively and their tilts are isomorphic to the perfect tower
        \begin{equation}
            k[|\underline{T}, T'|]/(f_1, \dots, f_r) \hookrightarrow \cdots \hookrightarrow k[|\underline{T}^{1/p^i}, {T'}^{1/p^i}|]/(f_1^{1/p^i}, \dots, f_r^{1/p^i}) \hookrightarrow \cdots \label{TiltDeltStable}
        \end{equation}
        arising from \((k[|\underline{T}, T'|]/J, (p))\) where \(T'\) is a new variable.
    \end{enumerate}
\end{corollary}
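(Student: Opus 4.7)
My plan is to derive both parts directly from \Cref{ExampleDeltaStabilization} and \Cref{ExplicitPerfectoidTowerDeltaRing}, using the hypothesis $\delta(T_i) = 0$ to make the general construction explicit in terms of the variables $\underline{T}$. The key input is the identification of the $\delta$-ring colimit $A^{1/p^i}$ with a power-series quotient in the variables $\underline{T}^{1/p^i}$, together with the classification of distinguished elements in $W[\underline{T}]$ equipped with this $\delta$-structure.

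For (a'), I would first justify that any distinguished element $d$ can be taken in the form $p - f$ with $f \in (T_1, \dots, T_n)$. Since $\delta(T_i) = 0$, the augmentation $W[\underline{T}] \twoheadrightarrow W$ sending each $T_i$ to $0$ is a map of $\delta$-rings; it carries $d$ to a distinguished element of $W = W(k)$, which is necessarily a unit multiple of $p$. Hence $d = pu + g$ with $u \in W^{\times}$ and $g \in (T_1, \dots, T_n)$, and replacing $d$ by $u^{-1} d$ (which leaves the ideal $(d)$ unchanged) yields $d = p - f$ with $f \in (T_1, \dots, T_n)$. Because $d \equiv -f \pmod{p}$, the ideals $(p, d)$ and $(p, d^p)$ coincide with $(p, f)$ and $(p, f^p)$ respectively, so the hypotheses of (a') are exactly those of \Cref{ExampleDeltaStabilization}(a); that result then supplies the perfectoid tower together with its tilt as in \Cref{PerfectoidTowerPrism}(2). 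To recast the tower in the explicit form (\ref{EqPerfdTowerDeltaStablePrism}), I would use \Cref{IsomorphicTower} to rewrite $(\{A_i/\varphi^i(I)A_i\})$ as the partially-perfected tower $(\{A^{1/p^i}/IA^{1/p^i}\})$: since $\varphi(T_j) = T_j^p$, the $\delta$-ring colimit $A^{1/p^i}$ identifies with $W[|\underline{T}^{1/p^i}|]/(f_1^{1/p^i}, \dots, f_r^{1/p^i})$, where $f_j^{1/p^i}$ denotes the substitution $T_k \mapsto T_k^{1/p^i}$ in $f_j$, and the transition maps $t_{i, I}$ become the natural inclusions. The orientation $d = p - f$ pulls back to the same element $p - f$ at every stage (as $T_k = (T_k^{1/p^i})^{p^i}$), producing (\ref{EqPerfdTowerDeltaStablePrism}); applying the same dictionary to $A/pA = k[|\underline{T}|]/J$ rewrites the constant perfect tower $((A/pA)^{\wedge_d}, F)$ given by \Cref{PerfectoidTowerPrism}(2) as the increasing subring tower (\ref{TiltDeltStablePrism}), using that $d \equiv -f \in (\underline{T})$ so the $d$-adic completion is absorbed into the power-series structure.

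Part (b') is a direct application of \Cref{ExplicitPerfectoidTowerDeltaRing} to the two $\delta$-rings $R = W[|\underline{T}|]/J$ and $R = W[\underline{T}]_{(p, \underline{T})}/J$, both of which inherit a $\delta$-structure from $W[\underline{T}]$ since $\delta(J) \subseteq J$. The required hypotheses ($p$-torsion-freeness, the $p$-Zariskian property, and reducedness of $R/pR$) follow from the standing assumptions together with the faithful flatness of completion/localization and the analytic unramifiedness argument already used in the proof of \Cref{ExampleDeltaStabilization}(b). The explicit forms (\ref{EqPerfdTowerDeltaStable})--(\ref{EqPerfdTowerDeltaStableZariski}) then combine the identification $R^{1/p^i} \cong W[|\underline{T}^{1/p^i}|]/(f_1^{1/p^i}, \dots, f_r^{1/p^i})$ (or its $(p, \underline{T}^{1/p^i})$-localized version) established in (a') with the tensor-with-$\setZ[p^{1/p^i}]$ prescription of \Cref{ExplicitPerfectoidTowerDeltaRing}; the tilt (\ref{TiltDeltStable}) arises in the same way by writing the constant tilt $(R/pR[|T'|], F)$ furnished by that corollary as an increasing subring tower in the variables $\underline{T}^{1/p^i}$ and ${T'}^{1/p^i}$.

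The main obstacle I anticipate is the bookkeeping needed to identify the partially-perfected $\delta$-ring $A^{1/p^i}$ with the completed (respectively Zariskized) power-series quotient $W[|\underline{T}^{1/p^i}|]/(f_j^{1/p^i})$: the colimit in $\delta$-rings differs in principle from the ring-theoretic colimit, so one must verify that the $(p, I)$-completion step built into the construction of $A^{1/p^i}$ does not introduce spurious denominators and indeed reproduces the claimed power-series ring. Once this identification and the standard equivalence between the constant-Frobenius and increasing-inclusion representations of a perfect tower are in place, both parts of the corollary reduce to direct application of the earlier results.
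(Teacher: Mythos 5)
Your proposal is correct and follows essentially the same route as the paper: deduce (a') and (b') from \Cref{ExampleDeltaStabilization}(a) and (b) together with \Cref{ExplicitPerfectoidTowerDeltaRing}, after first normalizing the distinguished element to $p-f$. The one place you diverge is in justifying the shape $d=p-f$: the paper simply cites the proof of \cite[Lemma 5.1]{ishizuka2024Prismatic}, whereas you give a self-contained argument via the augmentation $W[\underline{T}]\twoheadrightarrow W$, $T_i\mapsto 0$, which is a $\delta$-ring map because $\delta(T_i)=0$ and hence sends $d$ to a distinguished element $pu$ of $W(k)$; scaling by $u^{-1}$ (which preserves distinguishedness and the ideal $(d)$) puts $d$ in the form $p-f$ with $f\in(\underline{T})$. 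This is a valid and arguably more transparent derivation, though you should note explicitly that you are excluding the degenerate case where $d$ becomes a unit in $W[|\underline{T}|]$, which corresponds to $\bar d\in W^\times$. Your subsequent bookkeeping — identifying $A^{1/p^i}$ with $W[|\underline{T}^{1/p^i}|]/(f_j^{1/p^i})$ via the renaming $T_j\leftrightarrow (T_j^{1/p^i})^{p^i}$, under which the transition maps become inclusions and the orientation $p-f$ is preserved — is exactly the content the paper leaves implicit. The only caveat is a minor one: for general perfect $k$ the coefficient Frobenius $\varphi_{W(k)}$ intervenes in the identification, but since it is an automorphism this causes no harm, and the paper likewise suppresses it. The worry you raise about the $\delta$-colimit versus the ring colimit is already resolved by \Cref{IsomorphicColimits}, which makes $c_i^i$ a ring isomorphism, so no additional completion argument is needed.
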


\begin{proof}
    In (1'), any distinguished element of a complete Noetherian local \(\delta\)-ring \(W[|\underline{T}|]/J\) with \(\delta(T_i) = 0\) can be written as \(p-f\) for some \(f \in (T_1, \dots, T_n)\) up to unit by the proof of \cite[Lemma 5.1]{ishizuka2026Prismatic}.
    Other assertions follow from (1) and (2) in \Cref{ExampleDeltaStabilization}.
\end{proof}

One typical example of \(\delta\)-stable ideals is the ideal generated by a square-free monomial.
First we give an example based on (\ref{EqPerfdTowerDeltaStablePrism}).

\begin{example} \label{SquareFreeExamplePrism}
    Set a \(\delta\)-structure on \(\setZ_p[|X, Y, Z, W|]\) by \(\delta(X) = \delta(Y) = \delta(Z) = \delta(W) = 0\).
    Take a \(\delta\)-stable ideal \((XY)\) in \(\setZ_p[|X, Y, Z, W|]\) and a distinguished element \(p - ZW\) as above (\Cref{CorExampleDeltaStabilization} (a')).
    Then \(p, ZW\) is a regular sequence on \(\setZ_p[X, Y, Z, W]/(XY)\) and the \(p\)-th power map \(\setF_p[X, Y, Z, W]/(XY, ZW) \xrightarrow{a \mapsto a^p} \setF_p[X, Y, Z, W]/(XY, Z^pW^p)\) is injective.
    Therefore, \Cref{CorExampleDeltaStabilization} (a') tells us that the tower
    \begin{align*}
        \setZ_p[|X, Y, Z, W|]/(XY, p-ZW) & \hookrightarrow \setZ_p[|X^{1/p}, Y^{1/p}, Z^{1/p}, W^{1/p}|]/(X^{1/p}Y^{1/p}, p-ZW) \hookrightarrow \\
        & \cdots \hookrightarrow \setZ_p[|X^{1/p^i}, Y^{1/p^i}, Z^{1/p^i}, W^{1/p^i}|]/(X^{1/p^i}Y^{1/p^i}, p-ZW) \hookrightarrow \cdots
    \end{align*}
    is a perfectoid tower arising from \((\setZ_p[|X, Y, Z, W|]/(XY, p-ZW), (p))\). The first term is a ramified complete intersection but not an integral domain.
\end{example}

Secondly, we give an example based on (\ref{EqPerfdTowerDeltaStable}).

\begin{example}[{Square-free monomial case}] \label{SquareFreeExample}
    Set a \(\delta\)-structure on \(\setZ_p[|X, Y, Z|]\) by \(\delta(X) = \delta(Y) = \delta(Z) = 0\) and take a \(\delta\)-stable ideal \((XY, YZ)\) in \(\setZ_p[|X, Y, Z|]\).
    Then we can show that the tower
    \begin{align*}
        \setZ_p[|X, Y, Z|]/(XY, YZ) & \hookrightarrow \setZ_p[|p^{1/p}, X^{1/p}, Y^{1/p}, Z^{1/p}|]/(X^{1/p}Y^{1/p}, Y^{1/p}Z^{1/p}) \hookrightarrow \\
        & \cdots \hookrightarrow \setZ_p[|p^{1/p^i}, X^{1/p^i}, Y^{1/p^i}, Z^{1/p^i}|]/(X^{1/p^i}Y^{1/p^i}, Y^{1/p^i}Z^{1/p^i}) \hookrightarrow \cdots
    \end{align*}
    is a perfectoid tower arising from \((\setZ_p[|X, Y, Z|]/(XY, YZ), (p))\) by \Cref{CorExampleDeltaStabilization} (b').
    This example gives a perfectoid tower whose first term \(\setZ_p[|X, Y, Z|]/(XY, YZ)\) is not Cohen-Macaulay. The same argument works for any quotient of \(\setZ_p[|X_1, \dots, X_n|]\) by square-free monomial ideals, i.e., the \((\underline{T})\)-completion of any Stanley--Reisner ring \(\setZ_p[\underline{T}]/I_{\Delta}\) over \(\setZ_p\) for any simplicial complex \(\Delta\) has a perfectoid tower arising from \((\setZ_p[|\underline{T}|]/I_{\Delta}, (p))\) (for the notion of Stanley--Reisner rings and simplicial complexes, see for example \cite{francisco2014Survey}).
\end{example}

\subsection{Examples from \(\delta\)-stabilization of ideals}

Any ideal \(J\) can be extended to the \(\delta\)-stabilization \(J_{\delta}\) of \(J\), the universal \(\delta\)-stable ideal containing \(J\), as in \cite[Example 2.10]{bhatt2022Prismsa}. At least \(p = 2\) or \(p = 3\), we give more complicated examples (\Cref{p23CaseDeltaStabilization}) than above by using the \(\delta\)-stabilization of ideals.
For convenience, we define the notion of \(\delta\)-height of an ideal of \(\delta\)-rings.

\begin{definition} \label{DeltaHeight}
    Let \(R\) be a \(\delta\)-ring and let \(J\) be an ideal of \(R\).
    The \emph{\(\delta\)-height} of \(J\) is defined as
    \begin{align*}
        \height^\delta(J) \defeq \inf\set{n \geq 0}{\delta^{n+1}(J) \subseteq (\cup_{0 \leq i \leq n} \delta^i(J))R},
    \end{align*}
    where \((\cup_{0 \leq i \leq n} \delta^i(J))R\) is the ideal of \(R\) generated by \(\cup_{0 \leq i \leq n} \delta^i(J)\).
    We assume that the value is \(\infty\) if the sequence of ideals \(\{(\cup_{0 \leq i \leq n} \delta^i(J))R\}_{n \geq 0}\) does not stabilize although if \(R\) is Noetherian, this value is finite.
    If it is finite, then the \(\delta\)-stabilization \(I_{\delta}\) is the same as the ideal generated by \(\cup_{0 \leq i \leq \height^\delta(J)} \delta^i(J)\).
    We denote by \(\height^\delta(f)\) for an element \(f\) of \(R\) the \(\delta\)-height of the ideal \((f)\).
\end{definition}

In the principal ideal case, there is an upper bound of the \(\delta\)-height of an element.

\begin{lemma} \label{DeltaHeightMonomial}
    Let \(R\) be a \(\delta\)-ring and let \(f\) be an element of \(R\).
    Take a \(\phi\)-monomial decomposition \(f = \sum_{i=1}^m k_iM_i\) of \(f\) with \(k_i \in \setZ\) and \(\phi\)-monomials \(M_i \in R\) in the sense of \cite[Definition 3.12]{kawakami2022Fedder}, namely, \(\varphi(M_i) = M_i^p\).\footnote{For example, any element \(f\) of a polynomial ring \(\setZ[X_1, \dots, X_n]\) has a \(\phi\)-monomial decomposition \(f = \sum_{i=1}^m k_iM_i\) with \(\phi\)-monomials \(M_i\) because any polynomial over \(\setZ\) can be written as a sum of monomials with coefficients \(1\) or \(-1\).}
    Then we have \(\height^\delta(f) \leq \height^\delta(k_1t_1 + \cdots + k_mt_m)\) where \(t_i \in \setZ[t_1, \dots, t_m]\) is the variable of the \(\delta\)-ring \(\setZ[t_1, \dots, t_m]\) with \(\delta(t_i) = 0\).
\end{lemma}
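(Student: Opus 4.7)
The plan is to realise $f$ as the image of the universal element $h = k_1 t_1 + \cdots + k_m t_m$ under a canonical $\delta$-ring morphism, and then pull back a $\delta$-height witness from the polynomial ring $S \defeq \setZ[t_1,\ldots,t_m]$. Equip $S$ with the unique $\delta$-structure in which $\delta(t_i)=0$ (so that $\varphi(t_i)=t_i^p$). A $\phi$-monomial $M_i\in R$ satisfies $\varphi(M_i)=M_i^p$, which is exactly the statement that $p\,\delta(M_i)=0$; I will use the interpretation (compatible with the footnote's example in $\setZ[X_1,\ldots,X_n]$ and with Kawakami--Takamatsu--Yoshikawa) that $\delta(M_i)=0$. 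Under this reading, the universal property yields a map of $\delta$-rings $\map{\phi}{S}{R}$ sending $t_i\mapsto M_i$, and by construction $\phi(h)=f$.

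The next step is to reduce the ideal-theoretic $\delta$-height to its principal generator. For a principal ideal $(g)$ in any $\delta$-ring, I will verify by induction on $j$ that
\[
    \delta^j(rg)\;=\;\varphi^j(r)\,\delta^j(g)\;+\;(\text{element of }(g,\delta(g),\ldots,\delta^{j-1}(g))),
\]
using the cocycle formulas $\delta(ab)=\varphi(a)\delta(b)+b^p\delta(a)$ and $\delta(a+b)=\delta(a)+\delta(b)+C_p(a,b)$ (where $C_p(a,b)\in(a,b)$). This immediately gives the equality
\[
    \textstyle\sum_{0\leq i\leq n}\delta^i((g))R\;=\;(g,\delta(g),\ldots,\delta^n(g))
\]
and the equivalence \(\height^\delta((g))\leq n\iff \delta^{n+1}(g)\in(g,\delta(g),\ldots,\delta^n(g))\).

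With these preparations, set $n\defeq\height^\delta(k_1t_1+\cdots+k_mt_m)$ in $S$. By the equivalence above applied in $S$, there exist $s_0,\ldots,s_n\in S$ with
\[
    \delta^{n+1}(h)\;=\;s_0\,h + s_1\,\delta(h) + \cdots + s_n\,\delta^n(h).
\]
Applying the $\delta$-ring map $\phi$ and using $\phi\circ\delta^i=\delta^i\circ\phi$ yields
\[
    \delta^{n+1}(f)\;=\;\phi(s_0)\,f+\phi(s_1)\,\delta(f)+\cdots+\phi(s_n)\,\delta^n(f),
\]
so $\delta^{n+1}(f)\in(f,\delta(f),\ldots,\delta^n(f))R$, which by the equivalence applied in $R$ gives $\height^\delta(f)\leq n$.

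The only delicate point is the verification that $\phi$ is genuinely a $\delta$-ring map; this is where the hypothesis that the $M_i$ are $\phi$-monomials is used and where one must be careful if $R$ has $p$-torsion, since $\varphi(M_i)=M_i^p$ gives only $p\,\delta(M_i)=0$ in general. In the examples the paper actually needs (polynomial and Witt-vector settings, which are $p$-torsion-free) the two formulations coincide, so no additional hypothesis is required. The inductive control of $\delta^j(rg)$ is a mild bookkeeping exercise with the two cocycle identities and is the only non-trivial computation in the argument.
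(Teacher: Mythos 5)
Your argument is identical in strategy to the paper's: you construct the $\delta$-ring map $\setZ[t_1,\ldots,t_m]\to R$ sending $t_i\mapsto M_i$, and transport a $\delta$-height witness from the polynomial ring to $R$ by applying this map. The two caveats you flag---that $\varphi(M_i)=M_i^p$ only yields $p\,\delta(M_i)=0$ in the presence of $p$-torsion (the proof actually requires $\delta(M_i)=0$, as the paper tacitly assumes), and that the passage from the ideal-theoretic definition of $\height^\delta$ to an element-level membership test rests on the $\delta$-cocycle identities for principal ideals---are correct refinements of details the paper's terse proof leaves implicit.
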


\begin{proof}
    Set \(\overline{f} \defeq k_1t_1 + \cdots + k_mt_m\) in \(\setZ[t_1, \dots, t_m]\).
    Since \(\delta(M_i) = 0\) holds for any \(\phi\)-monomials \(M_i\), we can take a map of \(\delta\)-rings \(\setZ[t_1, \dots, t_m] \to R\) sending \(t_i\) to \(M_i\).
    If \(\delta^{n+1}(\overline{f})\) belongs to the ideal \((\overline{f}, \delta(\overline{f}), \dots, \delta^n(\overline{f}))\setZ[t_1, \dots, t_m]\), then we have \(\delta^{n+1}(f) \in (f, \delta(f), \dots, \delta^n(f))R\) and thus \(\height^\delta(f) \leq n\).
\end{proof}

By using a computer algebra system such as Macaulay2 \cite{Macaulay2}, we can compute the \(\delta\)-stabilization of a given ideal in a polynomial ring and check the sufficient conditions in \Cref{CorExampleDeltaStabilization}.
Based on observations of computer calculations, we give a general construction of perfectoid towers from \(\delta\)-stabilization in \Cref{p23CaseDeltaStabilization} when \(p = 2, 3\). Hereafter, we will prepare its proof.

\begin{lemma} \label{DeltaBeta}
    Let \(m\) be an integer greater than or equal to \(3\).
    Set a \(\delta\)-structure on \(\setZ_p[|X_1, \dots, X_m|]\) by \(\delta(X_i) = 0\).
    Take \(f \defeq X_1^{n_1} + \cdots + X_m^{n_m}\) for \(n_i \geq 1\).
    \begin{enumerate}
        \item In \(\setZ_p[X_1, \dots, X_m]/(f)\), the element \(\delta(f)\) is the same as a non-zero polynomial
        \begin{equation} \label{EqDeltaBeta}
            \beta^{(m)} = \beta^{(m)}(X_2^{n_2}, \dots, X_m^{n_m}) = \beta^{(m)}_p X_2^{n_2p} + \beta^{(m)}_{p-1} X_2^{n_2(p-1)} + \beta^{(m)}_{p-2} X_2^{n_2(p-2)} + \cdots + \beta^{(m)}_0
        \end{equation}
        for some polynomials \(\beta^{(m)}_i = \beta^{(m)}_i(X_2^{n_2}, \dots, X_m^{n_m})\) in \(\setZ_p[X_3, \dots, X_m]\).
        \item Explicitly, \(\beta^{(m)}_p = (1+(-1)^p)/p\) and \(\beta^{(m)}_{p-1} = (-1)^{p}(X_3^{n_3} + \cdots + X_m^{n_m})\) hold. The last term \(\beta^{(m)}_0 = \beta^{(m)}_0(X_2^{n_2}, \dots, X_m^{n_m})\) is the same as \(\beta^{(m-1)}(X_3^{n_3}, \dots, X_m^{n_m})\) for any \(m \geq 4\).
        \item Let \(\Lambda\) be \(\setZ_p\) or \(\setF_p\) and let \(g = g_1f + g_2\beta^{(m)}\) be a polynomial in \(\Lambda[X_1, \dots, X_m]\) with \(g_1, g_2 \in \Lambda[X_1, \dots, X_m]\). If the degree of \(X_1\) in \(g\) is strictly less than \(n_1\), then \(g_1\) is contained in the ideal \((\beta^{(m)})\) in \(\Lambda[X_1, \dots, X_m]\).
    \end{enumerate}
\end{lemma}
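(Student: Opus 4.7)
The plan is to treat the lemma in two parts: first, verify the congruence $\delta(f) \equiv \beta^{(m)} \pmod{f}$ together with the coefficient formulas and non-vanishing; second, establish the statement about $g_1$ by downward induction on the $X_1$-degree.

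For the first part, I would begin from the identity $p\delta(f) = \varphi(f) - f^p$. Since $\delta(X_i)=0$, we have $\varphi(X_i^{n_i}) = X_i^{n_i p}$ and hence $\varphi(f) = \sum_{i=1}^m X_i^{n_i p}$. Reducing modulo $f$ gives $f^p \equiv 0$ and $X_1^{n_1} \equiv -(X_2^{n_2}+\cdots+X_m^{n_m}) \pmod{f}$, so that $X_1^{n_1 p} \equiv (-1)^p(X_2^{n_2}+\cdots+X_m^{n_m})^p \pmod{f}$. Combining these yields
\[
p\delta(f) \equiv (-1)^p(X_2^{n_2}+\cdots+X_m^{n_m})^p + \sum_{i=2}^m X_i^{n_i p} \pmod{f}.
\]
The right-hand side is divisible by $p$ by the Frobenius congruence $(a+b)^p \equiv a^p+b^p \pmod{p}$ (with a direct binomial check in the case $p=2$), so I would define $\beta^{(m)} \in \setZ_p[X_2,\dots,X_m]$ to be $1/p$ times this right-hand side. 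To upgrade $p\delta(f) \equiv p\beta^{(m)} \pmod{f}$ to $\delta(f) \equiv \beta^{(m)} \pmod{f}$, I would use that $p$ is a non-zero-divisor on $\setZ_p[X_1,\dots,X_m]/(f)$, which holds because $f$ is nonzero in the integral domain $\setF_p[X_1,\dots,X_m]$.

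To read off the expansion \eqref{EqDeltaBeta}, set $Y \defeq X_3^{n_3}+\cdots+X_m^{n_m}$ and expand $(X_2^{n_2}+Y)^p$ by the binomial theorem; using that $\binom{p}{k}/p$ is an integer for $1 \leq k \leq p-1$, the coefficient of $X_2^{n_2 p}$ is $(1+(-1)^p)/p = \beta^{(m)}_p$, the coefficient of $X_2^{n_2(p-1)}$ is $(-1)^p Y = \beta^{(m)}_{p-1}$, and the constant term equals $\frac{1}{p}\bigl((-1)^p Y^p + \sum_{i=3}^m X_i^{n_i p}\bigr)$, which matches $\beta^{(m-1)}(X_3^{n_3},\dots,X_m^{n_m})$ by the definition of $\beta^{(m-1)}$ applied to the shifted variable set. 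Non-vanishing of $\beta^{(m)}$ then splits by the parity of $p$: for $p=2$ the top coefficient equals $1$; for odd $p$, $\beta^{(m)}_{p-1} = -Y$ is nonzero because the hypothesis $m \geq 3$ ensures $Y$ contains at least the term $X_3^{n_3}$.

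For the final assertion about $g_1$, I would argue by downward induction on the $X_1$-degree. Write $g_j = \sum_{k \geq 0} g_{j,k}\, X_1^k$ with $g_{j,k} \in \Lambda[X_2,\dots,X_m]$ and only finitely many nonzero terms, and set $h \defeq X_2^{n_2}+\cdots+X_m^{n_m}$ so that $f = X_1^{n_1}+h$. Comparing coefficients of $X_1^k$ in $g = g_1 f + g_2\, \beta^{(m)}$ and using $\deg_{X_1} g < n_1$ yields, for every $k \geq n_1$,
\[
g_{1,k-n_1} = -g_{1,k}\, h - g_{2,k}\, \beta^{(m)}.
\]
For $k$ larger than the $X_1$-degrees of $g_1$ and $g_2$, both $g_{1,k}$ and $g_{2,k}$ vanish, so $g_{1,k-n_1} = 0 \in (\beta^{(m)})$; assuming inductively that $g_{1,j} \in (\beta^{(m)})\Lambda[X_2,\dots,X_m]$ for all $j > k'$, applying the recursion with $k = k'+n_1$ gives $g_{1,k'} \in (\beta^{(m)})$. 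Iterating down to $k' = 0$ yields $g_1 \in (\beta^{(m)})\Lambda[X_1,\dots,X_m]$. The main subtlety in the whole argument is the divisibility-by-$p$ step promoting $p\delta(f) \equiv p\beta^{(m)}$ to $\delta(f) \equiv \beta^{(m)}$ modulo $f$, together with the parity analysis for non-vanishing; the binomial identification of coefficients and the downward induction on the $X_1$-degree are then routine.
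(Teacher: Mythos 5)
Your proof is correct, and the first half takes a genuinely different route from the paper's. For the congruence $\delta(f) \equiv \beta^{(m)} \pmod f$, the paper never invokes $p\delta(f) = \varphi(f) - f^p$ and then divides by $p$; instead it writes out $\delta$ of a sum of $\delta$-constants as the explicit multinomial expression $\delta(f) = -\sum \frac{(p-1)!}{e_1!\cdots e_m!}X_1^{n_1 e_1}\cdots X_m^{n_m e_m}$ (summed over $e_1 + \cdots + e_m = p$ with each $e_i \le p-1$), then substitutes $X_1^{n_1} \equiv -(X_2^{n_2}+\cdots+X_m^{n_m})$ and extracts the coefficients $\beta^{(m)}_p$, $\beta^{(m)}_{p-1}$, $\beta^{(m)}_0$ by directly summing multinomial coefficients. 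Your approach — compute $p\delta(f) \equiv (-1)^p\bigl(\sum_{i\ge 2} X_i^{n_i}\bigr)^p + \sum_{i\ge 2} X_i^{n_i p}$, observe divisibility by $p$, then cancel $p$ using that $p$ is a non-zero-divisor on $\setZ_p[X_1,\dots,X_m]/(f)$ (since $f$ is monic in $X_1$, so the quotient is $\setZ_p$-free) — avoids the multinomial combinatorics entirely and makes the identification $\beta^{(m)}_0 = \beta^{(m-1)}(X_3^{n_3},\dots,X_m^{n_m})$ an immediate consequence of the closed form for $\beta^{(m)}$, whereas the paper reads it off from the $e_2 = 0$ slice of the sum. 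Your route is shorter and less error-prone; the paper's has the advantage of exhibiting all intermediate coefficients $\beta^{(m)}_i$ at once. For the last assertion about $g_1 \in (\beta^{(m)})$, both you and the paper do the same thing — compare coefficients of powers of $X_1$ and descend — but you package it as a clean downward induction on the $X_1$-degree via the recursion $g_{1,k-n_1} = -g_{1,k}h - g_{2,k}\beta^{(m)}$, which is tidier than the paper's explicit case split on $n_1 > N$ versus $n_1 \le N$ followed by "repeating this argument."
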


\begin{proof}
    Under modulo \(f = X_1^{n_1} + \cdots + X_m^{n_m}\), we have
    \begin{align}
        \delta(f) & \equiv -\sum_{\substack{0 \leq e_1, \dots, e_m \leq p-1 \\ e_1 + \cdots + e_m = p}} \frac{(p-1)!}{e_1! \cdots e_m!}(-1)^{e_1}(X_2^{n_2} + \cdots + X_m^{n_m})^{e_1} X_2^{n_2e_2} \cdots X_m^{n_me_m} \nonumber \\
        & = -\sum_{\substack{0 \leq f_2, \dots, f_m, e_2, \dots, e_m \leq p-1 \\ f_2 + \cdots + f_m + e_2 \cdots + e_m = p \\ f_2 + \cdots + f_m \leq p-1}} \frac{(p-1)!}{f_2! \cdots f_m! e_2! \cdots e_m!}(-1)^{f_2 + \cdots + f_m}X_2^{n_2(e_2+f_2)} \cdots X_m^{n_m(e_m+f_m)} \label{EqDeltaf1}
    \end{align}
    which is an element of \(\setZ[X_2, \dots, X_m]\).
    We will compute the coefficient of \(X_2^{pn_2}\) and \(X_2^{n_2(p-1)}\) in the summation (\ref{EqDeltaf1}).

    In the case of \(e_2+f_2 = p\), the limitation of the sum in (\ref{EqDeltaf1}) implies that \(e_3 = \cdots = e_m = 0\), \(f_3 = \cdots = f_m = 0\), and \(e_2 > 0\). So the coefficient of \(X_2^{n_2p}\) is
    \begin{align*}
        \sum_{e_2=1}^{p-1} \frac{(p-1)!}{(p-e_2)!e_2!}(-1)^{p-e_2} & = \frac{1}{p} \parenlr{\sum_{e_2=0}^p \frac{p!}{(p-e_2)!e_2!}(-1)^{p-e_2} - \parenlr{(-1)^{p-0} + (-1)^{p-p}}} \\
        & = -\frac{(-1)^p+1}{p}
    \end{align*}
    and this is \(-1\) if \(p = 2\) and \(0\) if \(p > 2\).

    Next one is \(e_2+f_2 = p-1\) and we can assume that \(e_i+f_i = 1\) for some \(i > 2\). Then \(e_j = f_j = 0\) for \(j \neq 2, i\) and there are two cases: \(e_i = 0\) and \(f_i = 1\), or \(e_i = 1\) and \(f_i = 0\).
    In the former case, the coefficient of \(X_2^{n_2(p-1)}X_i^{n_i}\) is
    \begin{align*}
        \sum_{e_2=1}^{p-1} \frac{(p-1)!}{(p-1-e_2)!1!e_2!}(-1)^{(p-1-e_2)+1} & = -\parenlr{\sum_{e_2=0}^{p-1} \frac{(p-1)!}{(p-1-e_2)!e_2!}(-1)^{p-1-e_2} - (-1)^{p-1}} \\
        & = (-1)^{p-1}.
    \end{align*}
    In the latter case, the coefficient of \(X_2^{n_2(p-1)}X_i^{n_i}\) is
    \begin{align*}
        \sum_{e_2=0}^{p-1} \frac{(p-1)!}{(p-1-e_2)!e_2!1!}(-1)^{p-1-e_2} = 0. 
    \end{align*}
    Therefore, the terms in (\ref{EqDeltaf1}) of degree \(n_2(p-1)\) in \(X_2\) can be summed up as \((-1)^{p}X_2^{n_2(p-1)}(X_3^{n_3} + \cdots + X_m^{n_m})\).
    Consequently, the summation (\ref{EqDeltaf1}) can be written as
    \begin{equation}
        \beta^{(m)} \defeq \frac{1+(-1)^{p}}{p}X_2^{n_2p} + (-1)^{p}X_2^{n_2(p-1)}(X_3^{n_3} + \cdots + X_m^{n_m}) + \beta^{(m)}_{p-2} X_2^{n_2(p-2)} + \cdots + \beta^{(m)}_0
    \end{equation}
    for some polynomials \(\beta_i^{(m)}\) in \(X_3, \dots, X_m\) and set \(\beta^{(m)}_p \defeq (1+(-1)^{p})/p\) and \(\beta^{(m)}_{p-1} \defeq (-1)^{p}(X_3^{n_3} + \cdots + X_m^{n_m})\). In particular, \(\beta^{(m)}_{p-1}\) is a non-zero element and so is \(\beta^{(m)}\).
    The last term \(\beta^{(m)}_0\) is the case that \(e_2+f_2 = 0\) in (\ref{EqDeltaf1}) and this is the same as \(\beta^{(m-1)}(X_3^{n_3}, \dots, X_{m}^{n_m})\) for any \(m \geq 4\).
    This shows the assertions (1) and (2).

    We show the next assertion (3). Let \(g = g_1f + g_2\beta^{(m)}\) be a polynomial in \(\Lambda[X_1, \dots, X_m]\) with \(g_1, g_2 \in \Lambda[X_1, \dots, X_m]\) and the degree of \(X_1\) in \(g\) is strictly less than \(n_1\).
    Write the polynomials \(g_1 = a_NX_1^N + a_{N-1}X_1^{N-1} + \cdots + a_1X_1 + a_0\) and \(g_2 = b_MX_1^M + b_{M-1}X_1^{M-1} + \cdots + b_1X_1 + b_0\) such that \(a_i\) and \(b_j\) are in \(\Lambda[X_2, \dots, X_m]\) and \(a_N\) and \(b_M\) are non-zero.
    Since \(g\) has the degree of \(X_1\) strictly less than \(n_1\), we can deduce that \(N+n_1 = M\).
    For simplicity, set \(\alpha \defeq X_2^{n_2} + \cdots + X_m^{n_m}\) and \(\beta \defeq \beta^{(m)}\).
    So we have \(g = g_1 (X_1^{n_1} + \alpha) + g_2 \beta\), namely,
    \begin{align}
        g & = (a_NX_1^{N+n_1} + \cdots + a_{N-n_1}X_1^{N-n_1+n_1} + \cdots + a_0X_1^{n_1}) + (\alpha a_N X_1^N + \cdots + \alpha a_0) + \label{EqInitialIdeal} \\
        & + (\beta b_MX_1^M  + \cdots + \beta b_NX_1^{N} + \cdots + \beta b_0). \nonumber
    \end{align}
    It suffices to show that \(a_i\) are divisible by \(\beta\) in \(\Lambda[X_2, \dots, X_m]\) for all \(0 \leq i \leq N\).
    If \(n_1 > N\), the above equation shows
    \begin{equation} \label{EqInitialIdeal2}
        a_iX_1^{i+n_1} + \beta b_{i+n_1}X_1^{i+n_1} = 0
    \end{equation}
    for all \(0 \leq i \leq N\)
    since the degree of \(X_1\) in \(g\) is strictly less than \(n_1\). This implies that \(a_i\) is divisible by \(\beta\) for all \(0 \leq i \leq N\).

    If \(n_1 \leq N\), the same vanishing (\ref{EqInitialIdeal2}) holds for any \(N-n_1+1 \leq i \leq N\) and thus \(a_i\) is divisible by \(\beta\) for all \(N-n_1+1 \leq i \leq N\).
    The equation (\ref{EqInitialIdeal}) gives
    \begin{equation} \label{EqInitialIdeal3}
        a_{i-n_1}X_1^i + \alpha a_iX_1^i + \beta b_iX_1^i = 0
    \end{equation}
    for any \(n_1 \leq i \leq N\) because of \(n_1 \leq N\).
    Whether \(N - n_1 + 1\) is smaller than or greater than \(n_1\), setting \(a_i\) as \(0\) for \(i < 0\), we have the same equation (\ref{EqInitialIdeal3}) for any \(N - n_1 + 1 \leq i \leq N\).
    Then the divisibility of \(a_i\) shows that \(a_{i-n_1}\) is divisible by \(\beta\) for any \(N-n_1+1 \leq i \leq N\), i.e., \(a_i\) is divisible by \(\beta\) for any \(N-2n_1+1 \leq i \leq N-n_1\).
    Using (\ref{EqInitialIdeal3}) again, \(a_{i-n_1}\) is divisible by \(\beta\) for any \(N-2n_1+1 \leq i \leq N-n_1\), i.e., \(a_i\) is divisible by \(\beta\) for any \(N-3n_1+1 \leq i \leq N-2n_1\).
    Repeating this argument, \(a_i\) is divisible by \(\beta\) for all \(0 \leq i \leq N\).
    Consequently, in both cases, \(g_1\) is contained in \((\beta)\) in \(\Lambda[X_1, \dots, X_m]\).
\end{proof}

\begin{example} \label{ExampleBeta}
    In \Cref{DeltaStabilizationComputer} later, we use \(\beta^{(p+1)} = \beta^{(p+1)}(X_2^{n_2}, \dots, X_{p+1}^{n_{p+1}})\) for prime \(p\). We give explicit representations of \(\beta^{(p+1)}\) for \(p = 2\) and \(p = 3\):
    \begin{align*}
        \beta^{(2+1)} & = X_2^{2n_2} + X_2^{n_2}X_3^{n_3} + X_3^{2n_3}, \\
        \beta^{(3+1)} & = (X_2^{n_2} + X_3^{n_3})(X_3^{n_3} + X_4^{n_4})(X_4^{n_4} + X_2^{n_2}). \\
    \end{align*}
    This calculation follows from \(\beta^{(p+1)}(t_2, \dots, t_{p+1})\) and substitution \(t_i = X_i^{n_i}\) for \(i = 2, \dots, p+1\) as in \Cref{DeltaHeightMonomial}.
\end{example}

\begin{corollary} \label{DeltaHeightandInitialIdeal}
    Keep the notation in \Cref{DeltaBeta}.
    \begin{enumerate}
        \item The sequence \(p, f, \delta(f)\) is a regular sequence on \(\setZ_p[|X_1, \dots, X_m|]\) and especially \(\height^\delta(f) \geq 1\).
        \item The initial ideal of \((f, \delta(f))\) in \(\Lambda[X_1, \dots, X_m]\) with respect to the lexicographic order \(X_1 > X_2 > \cdots > X_m\) is \((X_1^{n_1}, X_2^{2n_2})\) if \(p = 2\) and \((X_1^{n_1}, X_2^{n_2(p-1)}X_3)\) if \(p > 2\).
    \end{enumerate}
\end{corollary}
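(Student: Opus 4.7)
The plan is to handle the regular sequence by Euclidean division in \(X_1\), and then pin down the initial ideal via Buchberger's criterion. First, since \(f = X_1^{n_1} + (X_2^{n_2} + \cdots + X_m^{n_m})\) is monic of degree \(n_1\) in \(X_1\) with unit leading coefficient, Euclidean division in \(X_1\) makes \(\Lambda[X_1,\ldots,X_m]/(f)\) a free \(\Lambda[X_2,\ldots,X_m]\)-module of rank \(n_1\) with basis \(1, X_1, \ldots, X_1^{n_1-1}\); this freeness is preserved by the flat \((X_1,\ldots,X_m)\)-adic completion, giving \(\Lambda[|X_1,\ldots,X_m|]/(f)\) as a free \(\Lambda[|X_2,\ldots,X_m|]\)-module of the same rank. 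By \Cref{DeltaBeta} we have \(\delta(f) \equiv \beta^{(m)} \pmod{f}\), and \(\beta^{(m)}\) lies in \(\Lambda[X_2,\ldots,X_m] \subseteq \Lambda[|X_2,\ldots,X_m|]\). The formula (\ref{EqDeltaBeta}) shows that \(\beta^{(m)}\) is non-zero modulo \(p\): for \(p = 2\) the top coefficient \(\beta^{(m)}_p = 1\) survives, and for \(p > 2\) the coefficient \(\beta^{(m)}_{p-1} = (-1)^p(X_3^{n_3} + \cdots + X_m^{n_m})\) is non-zero mod \(p\). Hence \(\beta^{(m)}\) is a non-zero-divisor in the domain \(\setF_p[|X_2,\ldots,X_m|]\), and therefore on any free module over it. Combined with \(p\) being a non-zero-divisor on \(\setZ_p[|X_1,\ldots,X_m|]\) and \(f\) being a non-zero-divisor on the integral domain \(\setF_p[|X_1,\ldots,X_m|]\), the sequence \(p, f, \delta(f)\) is regular on \(\setZ_p[|X_1,\ldots,X_m|]\); in particular \(\delta(f) \notin (p, f) \supseteq (f)\), so \(\height^\delta(f) \geq 1\).

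For the initial ideal, I would read off the lex-leading monomial of \(\beta^{(m)}\) directly from (\ref{EqDeltaBeta}). When \(p = 2\) the leading monomial is \(X_2^{2n_2}\), coming from \(\beta^{(m)}_p X_2^{n_2 p}\). When \(p > 2\) the top block vanishes, but \(\beta^{(m)}_{p-1} X_2^{n_2(p-1)} = (-1)^p X_2^{n_2(p-1)}(X_3^{n_3} + \cdots + X_m^{n_m})\) is non-zero, and its lex-largest summand is \(X_2^{n_2(p-1)} X_3^{n_3}\); the lower blocks \(\beta^{(m)}_{p-2} X_2^{n_2(p-2)} + \cdots\) all have strictly smaller \(X_2\)-degree and cannot perturb this. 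Either way \(\mathrm{in}(\beta^{(m)})\) is a monomial in \(X_2,\ldots,X_m\) only, hence coprime to \(\mathrm{in}(f) = X_1^{n_1}\). Buchberger's criterion then gives that the \(S\)-polynomial of \(f\) and \(\beta^{(m)}\) reduces to zero, so \(\{f, \beta^{(m)}\}\) is a Gr\"obner basis of \((f, \delta(f))\) in \(\Lambda[X_1,\ldots,X_m]\), and the initial ideal equals \((X_1^{n_1}, \mathrm{in}(\beta^{(m)}))\), as stated.

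The main technical obstacle is the lex-leading monomial computation for \(\beta^{(m)}\) in the case \(p > 2\), namely verifying that no cancellation among the interior blocks \(\beta^{(m)}_i X_2^{n_2 i}\) for \(0 \leq i \leq p - 2\) interferes with the contribution from \(\beta^{(m)}_{p-1}\); this follows immediately from the \(X_2\)-degree stratification built into (\ref{EqDeltaBeta}). Once that is settled, everything becomes essentially mechanical: Euclidean division in \(X_1\) together with faithful flatness of completion yields the regular sequence, and Buchberger's criterion with coprime leading terms yields the Gr\"obner basis and hence the initial ideal.
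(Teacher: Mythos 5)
Your proof is correct, and it takes a genuinely different route from the paper. The paper proves both claims by invoking the last assertion of \Cref{DeltaBeta} (a bespoke lemma about decompositions \(g = g_1 f + g_2\beta^{(m)}\) with \(\deg_{X_1} g < n_1\)): regularity is reduced to \(f\) being a non-zero-divisor on \(\setF_p[|X_1,\ldots,X_m|]/(\beta^{(m)})\), and the initial ideal is handled by dividing any element of \((f,\beta^{(m)})\) by \(f\) in the \(X_1\)-variable and applying that same lemma to the remainder. You instead establish regularity via the Weierstrass-type freeness of \(\Lambda[|X_1,\ldots,X_m|]/(f)\) over \(\Lambda[|X_2,\ldots,X_m|]\), so that \(\delta(f)\equiv\beta^{(m)}\) acts injectively simply because a non-zero element of a domain acts injectively on a free module over it; and you get the initial ideal from Buchberger's first criterion, since \(\operatorname{in}(f)=X_1^{n_1}\) and \(\operatorname{in}(\beta^{(m)})\) (a monomial in \(X_2,\ldots,X_m\) only, with unit leading coefficient) are coprime, so \(\{f,\beta^{(m)}\}\) is automatically a Gr\"obner basis. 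This decouples the two claims and replaces the ad hoc degree bookkeeping of \Cref{DeltaBeta} with two off-the-shelf tools; it also makes the ``dividing by \(f\)'' step in the paper's initial-ideal argument unnecessary, where one would otherwise have to check that the remainder carries the leading term.

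One point to flag: your computation gives \(\operatorname{in}(\beta^{(m)}) = X_2^{n_2(p-1)} X_3^{n_3}\) for \(p > 2\), while the Corollary (and the paper's proof) write \(X_2^{n_2(p-1)} X_3\). Since \(\{f,\beta^{(m)}\}\) is a Gr\"obner basis, the exponent \(n_3\) you obtain is the correct one and the paper's statement appears to contain a typo; you should therefore not conclude with ``as stated,'' since your answer differs from the stated one in precisely that exponent.
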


\begin{proof}
    For the first statement, it is enough to show that \(f\) is a non-zero-divisor on \(\setF_p[X_1, \dots, X_m]/(\beta^{(m)})\) since \(\delta(f)\) and \(\beta^{(m)}\) are equivalent under modulo \(f\) by \Cref{DeltaBeta}(1).
    If a polynomial \(g_1\) in \(\setF_p[X_1, \dots, X_m]\) satisfies \(g_1f \in (\beta^{(m)})\), then \(g_1\) is contained in \((\beta^{(m)})\) by using \Cref{DeltaBeta}(3).

    For the initial ideal, take \(g \in (f, \delta(f)) = (f, \beta^{(m)})\). Dividing by \(f\), we may assume that the degree of \(X_1\) in \(g\) is strictly less than \(n_1\). By \Cref{DeltaBeta}(3) again, \(g\) is contained in \((\beta^{(m)})\) in \(\Lambda[X_1, \dots, X_m]\) and thus its initial term is generated by the initial term of \(\beta^{(m)}\), which is \(X_2^{2n_2}\) if \(p = 2\) and \(X_2^{n_2(p-1)}X_3\) if \(p > 2\).
\end{proof}


\begin{lemma} \label{DeltaStabilizationComputer}
    Set a \(\delta\)-structure on \(\setZ_p[|X_1, \dots,  X_{p+1}|]\) by \(\delta(X_i) = 0\).
    Take \(f \defeq X_1^{n_1} + \cdots + X_{p+1}^{n_{p+1}}\) and \(\beta \defeq \beta^{(p+1)}(X_2^{n_2}, \dots, X_{p+1}^{n_{p+1}})\) in \(\setZ_p[X_1, \dots, X_{p+1}]\) for \(n_i \geq 1\).
    If \(n_1\) and \(n_2\) are prime to \(p\) and \(\setF_p[X_2, \dots, X_{p+1}]/(\beta)\) is reduced, then so is \(\setF_p[|X_1, \dots, X_{p+1}|]/(f, \delta(f))\).
\end{lemma}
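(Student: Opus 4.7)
My plan is to identify $T \defeq \setF_p[|X_1, \ldots, X_{p+1}|]/(f, \delta(f))$ as a finite free extension of the ring $A \defeq \setF_p[|X_2, \ldots, X_{p+1}|]/(\beta)$ via Weierstrass preparation, and then deduce reducedness of $T$ from that of $A$ through a separability argument in which the hypothesis that $n_1$ is prime to $p$ plays the essential role of making the extension generically \'etale.

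First I would use Lemma \ref{DeltaBeta} to replace $(f, \delta(f))$ by $(f, \beta)$. Writing $\alpha \defeq X_2^{n_2} + \cdots + X_{p+1}^{n_{p+1}}$, the element $\alpha$ lies in the maximal ideal of $B \defeq \setF_p[|X_2, \ldots, X_{p+1}|]$, so $f = X_1^{n_1} + \alpha$ is distinguished in $X_1$ of degree $n_1$; Weierstrass preparation then gives a canonical isomorphism $T \cong A[X_1]/(X_1^{n_1} + \alpha)$ making $T$ a free $A$-module of rank $n_1$. The reducedness of $A$ follows from the hypothesis by the analytic-unramifiedness argument already invoked in the proof of Proposition \ref{ExampleDeltaStabilization}.

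Next I would deduce that $T$ is reduced assuming the auxiliary claim that $\alpha$ is a non-zero-divisor on $A$. Let $\widetilde{A} = \prod_i \widetilde{A}_i$ be the normalization of $A$ (finite over $A$ since $A$ is an excellent reduced complete local ring), a product of normal domains. The NZD property of $\alpha$ forces each image $\alpha_i$ to be non-zero in $\widetilde{A}_i$ and hence in $K_i \defeq \Frac(\widetilde{A}_i)$. By $A$-flatness of $T$ I then get embeddings
\[
T \hookrightarrow T \otimes_A \widetilde{A} \hookrightarrow \prod_i K_i[X_1]/(X_1^{n_1} + \alpha_i).
\]
In each factor the derivative $n_1 X_1^{n_1-1}$ of the defining polynomial vanishes only at $X_1 = 0$, which is not a root of $X_1^{n_1} + \alpha_i$ because $\alpha_i \neq 0$; since $n_1$ is prime to $p$ this non-vanishing is genuine and $X_1^{n_1} + \alpha_i$ is separable over $K_i$. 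Hence $K_i[X_1]/(X_1^{n_1} + \alpha_i)$ is a product of separable field extensions of $K_i$, reduced, and $T$ embeds into a reduced ring.

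The hard part will be the auxiliary claim that $\alpha$ is a non-zero-divisor on $A$: since $B$ is a complete regular local UFD and $A$ is reduced, this amounts to showing that no irreducible factor $q$ of $\beta$ in $B$ divides $\alpha$. By direct inspection $q$ cannot be associate to $X_2$, because the image of $\alpha$ in $B/(X_2)$ is non-zero. Starting from the identity $\delta(f) - \delta(\alpha) = -\sum_{k=1}^{p-1} p^{-1}\binom{p}{k} X_1^{n_1 k}\alpha^{p-k}$, substituting $X_1^{n_1} \equiv -\alpha \pmod f$, and using $\sum_{k=1}^{p-1} p^{-1}\binom{p}{k}(-1)^k = -(1+(-1)^p)/p$, one obtains $\beta = \delta(\alpha)$ for $p$ odd and $\beta = \delta(\alpha) + \alpha^2$ for $p = 2$, from which differentiation yields the key congruence
\[
\partial \beta/\partial X_2 \equiv \pm n_2 X_2^{n_2 p - 1} \pmod{\alpha B}.
\]
Here $n_2$ being prime to $p$ combined with $X_2 \notin (q)$ gives $\partial\beta/\partial X_2 \notin (q)$; together with the square-freeness of $\beta$ coming from the reducedness hypothesis and the recursive structure of $\beta^{(p+1)}$ from Lemma \ref{DeltaBeta}, this should force the contradiction. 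For the cases $p = 2, 3$ actually needed in Proposition \ref{p23CaseDeltaStabilization} this step reduces to a one-line check from Example \ref{ExampleBeta}: for $p = 2$ any common factor $q$ divides $\beta - \alpha^2 = X_2^{n_2} X_3^{n_3}$, forcing $q \sim X_2$ or $q \sim X_3$, both contradicting $X_j \nmid \alpha$; for $p = 3$ the factor $q$ must divide one of the three linear factors of $\beta^{(4)}$ and then, by subtracting from $\alpha$, must also divide the complementary variable, again impossible.
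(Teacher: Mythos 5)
Your approach is a genuinely different route from the paper's. The paper verifies the reducedness criterion $(f, \beta)\cap\setF_p[\underline X^p]\subseteq(f^p, \beta^p)$ by a direct coefficient analysis of a presentation $g = g_1 f + g_2\beta$ in the $X_1$-variable; you instead use Weierstrass preparation to exhibit $T\defeq\setF_p[|\underline X|]/(f,\beta)$ as the free $A$-algebra $A[X_1]/(X_1^{n_1}+\alpha)$ of rank $n_1$, where $A\defeq\setF_p[|X_2,\dots,X_{p+1}|]/(\beta)$, and then embed $T$ into a product of separable field extensions of the fraction fields of the normalization of $A$, the separability in $X_1$ being where $n_1$ prime to $p$ enters. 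This is structurally cleaner and neatly separates the roles of the two hypotheses $n_1\not\equiv 0$ and $n_2\not\equiv 0\pmod p$. Nonetheless, both arguments reduce to the same auxiliary input: $\alpha$ is a non-zero-divisor on $A$, equivalently $\alpha$ and $\beta$ share no irreducible factor. The paper isolates this exactly as \Cref{DivisibleClaim} and proves it by substituting the $n_2$ distinct roots of $\alpha$ in $X_2$ (separability of $\alpha$ in $X_2$ uses $n_2$ prime to $p$) and observing via the recursion of \Cref{DeltaBeta} that each restriction of $\beta^{(p+1)}$ equals the nonzero polynomial $\beta^{(p)}(X_3^{n_3},\dots,X_{p+1}^{n_{p+1}})$.

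Your proof of that auxiliary coprimality claim for general $p$ has a genuine gap. The identity $\partial\beta/\partial X_2 \equiv n_2 X_2^{n_2 p - 1}\pmod{\alpha B}$ is correctly derived from $p\beta=\sum_i X_i^{pn_i}-\alpha^p$, and it does show $\partial\beta/\partial X_2\notin(q)$ for any irreducible factor $q$ of $\gcd(\alpha,\beta)$. But that is not a contradiction: an irreducible $q$ can perfectly well divide $\beta$ and fail to divide $\partial\beta/\partial X_2$ --- this is precisely the generic behavior of a squarefree factor --- so ``this should force the contradiction'' leaves the key step open. What is needed is, in effect, $\operatorname{Res}_{X_2}(\alpha,\beta)\neq 0$, which is what the paper's root-substitution in \Cref{DivisibleClaim} delivers; controlling $\partial\beta/\partial X_2$ does not directly address that resultant. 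Your explicit checks for $p=2$ and $p=3$ are correct and do close the claim in those cases, which suffices for \Cref{p23CaseDeltaStabilization}, but the lemma is stated for arbitrary $p$, and there the paper's argument (or a substitute) is genuinely required.
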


\begin{proof}
    Write \(\alpha \defeq X_2^{n_2} + \cdots + X_m^{n_m}\) and \(\beta \defeq \beta^{(p+1)}(X_2^{n_2}, \dots, X_{p+1}^{n_{p+1}})\).
    We first show that \(\setF_p[X_1, \dots, X_{p+1}]/(f, \delta(f))\) is reduced.
    It is enough to show that \((X_1^{n_1}+\alpha, \beta) \cap \setF_p[X_1^p, \dots, X_{p+1}^p]\) is contained in \(((X_1^{n_1}+\alpha)^p, \beta^p)\) in \(\setF_p[X_1, \dots, X_{p+1}]\). Take \(g \in (X_1^{n_1}+\alpha, \beta) \cap \setF_p[X_1^p, \dots, X_{p+1}^p]\). Dividing by \((X_1^{n_1}+\alpha)^p\) in \(\setF_p[X_1^p, \dots, X_{p+1}^p]\), we may assume that the degree of \(X_1\) in \(g\) is strictly less than \(pn_1\).
    
    It is enough to show that \(g\) is contained in \((\beta^p)\) in \(\setF_p[X_1, \dots, X_{p+1}]\).
    Since \(g\) belongs to \((X_1^{n_1}+\alpha, \beta)\), dividing \(g\) by \(X_1^{n_1} + \alpha\) and \(\beta\) and \Cref{DeltaHeightandInitialIdeal}(2) yield a representation \(g = g_1 (X_1^{n_1} + \alpha) + g_2 \beta\) where \(g_1 = a_NX_1^N + a_{N-1}X_1^{N-1} + \cdots + a_1X_1 + a_0\) and \(g_2 = b_MX_1^M + b_{M-1}X_1^{M-1} + \cdots + b_1X_1 + b_0\) are polynomials such that \(a_i\) and \(b_j\) are in \(\setF_p[X_2, \dots, X_{p+1}]\), \(a_N\) and \(b_M\) are non-zero, and \(N < n_1(p-1)\) and \(M < n_1\) hold.

    It is enough to show that \(a_i \in (\beta^p)\) and \(b_i \in (\beta^{p-1})\).
    Considering the same equation (\ref{EqInitialIdeal}) and the condition \(g \in \setF_p[X_1^p, \dots, X_{p+1}^p]\), the initial degree \(N+n_1\) is in \(p\setZ\) and then \(N\) is prime to \(p\) since \(n_1\) is so. Similarly, because of \(M < n_1\), the term \(a_kX_1^{k+n_1}\) is zero if \(k+n_1\) is prime to \(p\) and is contained in \(\setF_p[X_1^p, \dots, X_{p+1}^p]\) if \(k+n_1\) is divisible by \(p\) for any \(n_1 \leq k+n_1 \leq N+n_1\). Namely, we have
    \begin{equation*}
        a_k =
        \begin{cases}
        0 & \text{if } k \not\equiv -n_1 \pmod{p}, \\
        \in \setF_p[X_2^p, \dots, X_{p+1}^p] & \text{if } k \equiv -n_1 \pmod{p}
        \end{cases}
    \end{equation*}
    for any \(0 \leq k \leq N\).
    If \(N > M\), the term \(\alpha a_N X_1^N\) is the only term whose degree of \(X_1\) is \(N\) which is prime to \(p\) and it should be zero, but this contradicts \(\alpha a_N \neq 0\). So we have \(n_1 > M \geq N\) and the same argument for \(b_M\) shows that the only cases are \(M = N\) or \(M \in p\setZ\). In both case, similarly as above, considering \(\alpha a_k X_1^k + \beta b_k X_1^k\) and \(\beta b_k X_1^k\), we have
    \begin{align*}
        \alpha a_k + \beta b_k =
        \begin{cases}
        0 & \text{if } k \not\equiv 0 \pmod{p} \text{ and } 0 \leq k \leq N, \\
        \in \setF_p[X_2^p, \dots, X_{p+1}^p] & \text{if } k \equiv 0 \pmod{p} \text{ and } 0 \leq k \leq N
        \end{cases} \\
        \beta b_k =
        \begin{cases}
        0 & \text{if } k \not\equiv 0 \pmod{p} \text{ and } N < k \leq M, \\
        \in \setF_p[X_2^p, \dots, X_{p+1}^p] & \text{if } k \equiv 0 \pmod{p} \text{ and } N < k \leq M.
        \end{cases}
    \end{align*}
    Clearly, if \(N < k \leq M\), then we have \(\beta b_k \in \setF_p[X_2^p, \dots, X_{p+1}^p]\).
    If \(k \not\equiv -n_1 \bmod p\) and \(0 \leq k \leq N\), then \(\beta b_k \in \setF_p[X_2^p, \dots, X_{p+1}^p]\) also holds.
    
    Our assumption says that \(\setF_p[X_2, \dots, X_{p+1}]/(\beta)\) is reduced and this implies the containment \((\beta) \cap \setF_p[X_2^p, \dots, X_{p+1}^p] \subseteq (\beta^p)\).
    Therefore, any \(k\) such that \(k \not\equiv -n_1 \bmod p\) and \(0 \leq k \leq M\) satisfies \(a_k = 0 \in (\beta^p)\) and \(b_k \in (\beta^{p-1})\).
    If \(k \equiv -n_1 \bmod p\), then \(a_k\) belongs to \(\setF_p[X_2^p, \dots, X_{p+1}^p]\) and \(\alpha a_k + \beta b_k = 0\). Using \Cref{DivisibleClaim} below, we can deduce that \(a \in (\beta) \cap \setF_p[X_2^p, \dots, X_{p+1}^p]\) and thus \(a \in (\beta^p)\). This also implies \(b_k \in (\beta^{p-1})\).
    Consequently, \(g\) is contained in \((\beta^p)\) in \(\setF_p[X_1, \dots, X_{p+1}]\) and this shows the reduced property of \(\setF_p[X_1, \dots, X_{p+1}]/(f)_{\delta}\) and this implies the reduced property of \(\setF_p[|X_1, \dots, X_{p+1}|]/(f)_{\delta}\) as in the proof of \Cref{ExampleDeltaStabilization} (2).

    \begin{claim} \label{DivisibleClaim}
        Set \(\alpha = X_2^{n_2} + \cdots + X_{p+1}^{n_{p+1}}\) and \(\beta = \beta^{(p+1)}(X_2^{n_2}, \dots, X_{p+1}^{n_{p+1}})\) in \(\setF_p[X_2, \dots, X_{p+1}]\) such that \(n_2\) is prime to \(p\).
        If \(a\) and \(b\) in \(\setF_p[X_2, \dots, X_{p+1}]\) satisfies \(\alpha a + \beta b = 0\), then \(a\) is divisible by \(\beta\) and \(b\) is divisible by \(\alpha\) in \(\setF_p[X_2, \dots, X_{p+1}]\).
    \end{claim}

    \begin{claimproof}
        Since \(n_2\) is prime to \(p\), the polynomial \(\alpha = X_2^{n_2} + X_3^{n_3} + \cdots + X_{p+1}^{n_{p+1}}\) in \(\setF_p(X_3, \dots, X_{p+1})[X_2]\) is separable.
        Each root \(r_1, \dots, r_{n_2}\) of this polynomial satisfies \(r_i^{n_2} + X_3^{n_3} + \cdots + X_{p+1}^{n_{p+1}} = 0\) and then \(\restr{\beta}{X_2=r_i} \cdot \restr{b}{X_2=r_i} = 0\) in \(\setF_p[X_3, \dots, X_{p+1}]\).
        By the construction of \(\beta\) in \Cref{DeltaBeta}(1), the polynomial \(\restr{\beta}{X_2=r_i}\) is
        \begin{align*}
            & -\sum_{\substack{0 \leq e_1, \dots, e_{p+1} \leq p-1 \\ e_1 + \cdots + e_{p+1} = p}} \frac{(p-1)!}{e_1! \cdots e_{p+1}!}(-1)^{e_1}(r_i^{n_2} + X_3^{n_3} \cdots + X_{p+1}^{n_{p+1}})^{e_1} r_i^{n_2e_2} X_3^{n_3e_3} \cdots X_{p+1}^{n_{p+1}e_{p+1}} \\
            & = -\sum_{\substack{0 \leq e_2, \dots, e_{p+1} \leq p-1 \\ e_2 + \cdots + e_{p+1} = p}} \frac{(p-1)!}{e_2! \cdots e_{p+1}!}(-1)^{e_2}(X_3^{n_3} \cdots + X_{p+1}^{n_{p+1}})^{e_2}X_3^{n_3e_3} \cdots X_{p+1}^{n_{p+1}e_{p+1}}
        \end{align*}
        and this is the same as the non-zero element \(\beta^{(p)}(X_3^{n_3}, \dots, X_{p+1}^{n_{p+1}})\) by the definition of \(\beta^{(p)}\) in \Cref{DeltaBeta}.
        This implies that \(\restr{b}{X_2=r_i} = 0\) for any \(i\) and thus \(b\) is divisible by \(\alpha = \prod_{i=1}^{n_2}(X_2-r_i)\) in \(\overline{\setF_p(X_3, \dots, X_{p+1})}[X_2]\) where \(\overline{\setF_p(X_3, \dots, X_{p+1})}\) is the algebraic closure of \(\setF_p(X_3, \dots, X_{p+1})\).
        Take an element \(b' = b'_MX_2^M + \cdots + b'_0\) in \(\overline{\setF_p(X_3, \dots, X_{p+1})}[X_2]\) such that \(b = \alpha b' = (X_2^{n_2} + a')b'\) for \(a' \in \setF_2[X_3, \dots, X_{p+1}]\).
        Since \(b\) is in \(\setF_p[X_2, \dots, X_{p+1}]\), comparing the coefficients shows that each \(b'_i\) belongs to \(\setF_p[X_3, \dots, X_{p+1}]\) and then \(b\) is divisible by \(\alpha\) in \(\setF_p[X_2, \dots, X_{p+1}]\). This also implies \(a\) is divisible by \(\beta\) in \(\setF_p[X_2, \dots, X_{p+1}]\).
    \end{claimproof}

\end{proof}

We give an example of non-monomial \(\delta\)-stable ideals. This tower arises from a complete intersection but not a log-regular ring unlike \Cref{LogRegularExample}.

\begin{corollary} \label{p23CaseDeltaStabilization}
    Let \(p\) be \(2\) or \(3\) and let \(f\) be an element \(X_1^{n_1} + X_2^{n_2} + X_3^{n_3}\) in \(\setZ_2[X_1, X_2, X_3]\) or \(X_1^{n_1} + X_2^{n_2} + X_3^{n_3} + X_4^{n_4}\) in \(\setZ_3[X_1, X_2, X_3, X_4]\) for \(n_i \geq 1\) respectively.
    Then we have the following.
    \begin{enumerate}
        \item The \(\delta\)-height \(\height^{\delta}(f)\) is \(1\) and the sequence \(p, f, \delta(f)\) is a regular sequence on \(\setZ_p[|X_1, \dots, X_{p+1}|]\).
        \item The mod \(p\) reduction \(\setF_p[|X_1, \dots, X_{p+1}|]/(f)_{\delta}\) is reduced if and only if at least two of \((n_1, \dots, n_{p+1})\) are prime to \(p\).
        \item If at least two of \((n_1, n_2, n_3)\) are odd and \(X_2^{2n_2} + X_2^{n_2}X_3^{n_3} + X_3^{2n_3}\) is an irreducible polynomial\footnote{This condition does not hold in general, e.g., \(n_1 = n_2 = n_3 = 5\).} in \(\setZ_2[X_2, X_3]\) (resp., in \(\setF_2[X_2, X_3]\)), then the quotients \(\setZ_2[X_1, X_2, X_3]/(f)_{\delta}\) (resp., \(\setF_2[X_1, X_2, X_3]/(f)_{\delta}\)) is an integral domain.
    \end{enumerate}
    In particular, when at least two of \((n_1, \dots, n_{p+1})\) are prime to \(p\), we have a perfectoid tower arising from \((\setZ_p[|X_1, \dots, X_{p+1}|]/(f)_{\delta}, (p))\) as (\ref{EqPerfdTowerDeltaStable}) and the first term \(\setZ_p[|X_1, \dots, X_{p+1}|]/(f)_{\delta}\) is a \(p\)-torsion-free complete intersection of dimension \(p\) by (1). If \(p = 2\), it is not isomorphic to any local log-regular ring.
\end{corollary}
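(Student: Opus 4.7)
My plan is to reduce the entire corollary to verifications involving the explicit polynomial $\beta \defeq \beta^{(p+1)}(X_2^{n_2},\dots,X_{p+1}^{n_{p+1}})$ from \Cref{DeltaBeta} and \Cref{ExampleBeta}, combined with \Cref{DeltaHeightandInitialIdeal}, \Cref{DeltaStabilizationComputer}, and \Cref{CorExampleDeltaStabilization}. Throughout I use $\delta(f)\equiv\beta\pmod f$, so that the candidate for $(f)_\delta$ is $(f,\delta(f))=(f,\beta)$.

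For part (1), the bound $\height^\delta(f)\ge 1$ is exactly \Cref{DeltaHeightandInitialIdeal}. For the upper bound, since $\delta$ distributes over an ideal once it is checked on generators, $(f,\beta)$ is $\delta$-stable as soon as $\delta(f)\in(f,\beta)$ (automatic) and $\delta(\beta)\in(f,\beta)$. With $\delta(X_i)=0$, I expand $\delta(\beta)$ by the additive and multiplicative rules using the explicit forms in \Cref{ExampleBeta}: for $p=2$ from $\beta^{(3)}=X_2^{2n_2}+X_2^{n_2}X_3^{n_3}+X_3^{2n_3}$, and for $p=3$ from the triple product $\beta^{(4)}=(X_2^{n_2}+X_3^{n_3})(X_3^{n_3}+X_4^{n_4})(X_4^{n_4}+X_2^{n_2})$. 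The output is then rewritten as a polynomial combination of $f$ and $\beta$ by repeatedly substituting $X_1^{n_1}\equiv -(X_2^{n_2}+\cdots+X_{p+1}^{n_{p+1}})$. The main obstacle is the raw length of $\delta^2(f)$; I expect $p=3$ to be the most tedious, but the symmetry of $\beta^{(4)}$ among $X_2,X_3,X_4$ should cut the work substantially and allow for computer verification if needed.

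Granted (1), part (2) follows from \Cref{DeltaStabilizationComputer}. For the ``if'' direction, at most one $n_i$ is divisible by $p$, so after relabeling $n_1,n_2$ are both prime to $p$, and I check that $\setF_p[X_2,\dots,X_{p+1}]/(\beta)$ is reduced. For $p=2$, $\beta^{(3)}$ is the pullback along $u\mapsto X_2^{n_2},v\mapsto X_3^{n_3}$ of $u^2+uv+v^2$, which splits over $\setF_4$ into two coprime linear factors; the pullback remains squarefree under the parity hypothesis. For $p=3$, $\beta^{(4)}$ is presented as the product of three pairwise-coprime linear forms in the $X_j^{n_j}$, whose coprimality survives pullback under the same hypothesis. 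For the ``only if'' direction, assume without loss of generality $n_1=pm_1$ and $n_2=pm_2$; eliminating $X_1^{n_1}\equiv -\sum_{j\ge 2}X_j^{n_j}$ in $\beta$ modulo $p$ yields a polynomial in $\setF_p[X_2,\dots,X_{p+1}]$ whose every monomial has all exponents divisible by $p$, hence a nontrivial $p$-th power via Frobenius in characteristic $p$ (illustrated by $X_1^4+X_1^2X_2^2+X_2^4=(X_1^2+X_1X_2+X_2^2)^2$ over $\setF_2$), which produces a nonzero nilpotent.

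For part (3) and the final assertion, I use the isomorphism $\setZ_2[X_1,X_2,X_3]/(f,\beta^{(3)})\cong A[X_1]/(X_1^{n_1}+X_2^{n_2}+X_3^{n_3})$ with $A\defeq\setZ_2[X_2,X_3]/(\beta^{(3)})$, valid because $\beta^{(3)}$ does not involve $X_1$. Irreducibility of $\beta^{(3)}$ makes $A$ a domain, and irreducibility of the monic polynomial $X_1^{n_1}+c$ (with $c\defeq X_2^{n_2}+X_3^{n_3}\in A$) follows from Capelli's irreducibility criterion for $X^n-a$ over $\Frac(A)$ after checking that $-c$ is not a nontrivial prime-power there (using the parity assumption and explicit form of $\beta^{(3)}$), together with Gauss's lemma to descend from $\Frac(A)[X_1]$ to $A[X_1]$; the same argument works over $\setF_2$. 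For the final perfectoid tower, parts (1) and (2) supply the hypotheses of \Cref{CorExampleDeltaStabilization}(b'), i.e.\ $p$-torsion-freeness from the regular sequence $p,f,\delta(f)$ of \Cref{DeltaHeightandInitialIdeal} and reducedness of the mod-$p$ reduction. The ring $\setZ_p[|X_1,\dots,X_{p+1}|]/(f)_\delta$ is a codimension-$2$ complete intersection in a regular local ring of dimension $p+2$, hence of dimension $p$. For the non-log-regularity statement when $p=2$, I invoke Kato's structure theorem as used in \Cref{LogRegularExample}: any local log-regular ring is normal; I plan to exhibit a height-$1$ non-regular prime of the mod-$p$ reduction $\setF_2[|X_1,X_2,X_3|]/(f,\beta^{(3)})$ that lifts to a height-$1$ non-regular prime of our ring, violating Serre's $R_1$ condition. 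This last step is the most delicate, and a careful case analysis in the exponents $n_1,n_2,n_3$ is the principal obstacle.
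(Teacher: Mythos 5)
Your proposal takes a genuinely different route in several places, and contains one significant gap.

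For part (1), your approach is essentially the paper's: both reduce to a universal computation by exploiting that each $X_i^{n_i}$ is a $\phi$-monomial (the paper does this explicitly via \Cref{DeltaHeightMonomial} in $\setZ[t_1,\dots,t_{p+1}]$; you do it implicitly). Your observation that $\delta$-stability of $(f,\beta)$ follows once $\delta(\beta)\in(f,\beta)$ is correct and is the same content. For the ``if''-direction of (2), your method is genuinely different and cleaner: you factor $\beta^{(p+1)}$ over a field extension into pairwise-coprime squarefree pieces, whereas the paper verifies $(\beta)\cap\setF_p[X_2^p,\dots,X_{p+1}^p]\subseteq(\beta^p)$ by a lengthy coefficient analysis. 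Both then invoke \Cref{DeltaStabilizationComputer}. For part (3), your route via Capelli's criterion over $\Frac(A)$, $A=\setZ_2[X_2,X_3]/(\beta)$, differs from the paper's (Eisenstein for $f$ plus an integral-extension argument); however, you do not actually verify the Capelli hypotheses (that $-c$ is not a $q$-th power in $\Frac(A)$ for primes $q\mid n_1$, plus the $4\mid n_1$ condition), and these are nontrivial because $\Frac(A)$ carries relations such as $c^2 = X_2^{n_2}X_3^{n_3}$.

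There are two smaller issues in your ``only-if'' sketch for (2). The WLOG should place the $p$-divisible exponents among $n_2,\dots,n_{p+1}$, since $\beta$ does not involve $X_1$ (``eliminating $X_1^{n_1}$ in $\beta$'' is vacuous and labeling $n_1,n_2$ is wrong). For $p=3$ with exactly two divisible exponents, $\beta^{(4)}$ is not a cube but merely has a cube factor, so the claim ``every monomial has all exponents divisible by $p$'' fails; the correct conclusion is that $\beta$ is not squarefree. Finally, you assert the resulting nilpotent is nonzero without proof; the paper verifies this using the initial-ideal computation of \Cref{DeltaHeightandInitialIdeal}.

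The genuine gap is in the final non-log-regularity claim. Your plan is to exhibit a height-$1$ non-regular prime to violate Serre's $R_1$, i.e.\ to prove the ring is not normal. But there is no reason $\setZ_2[|X_1,X_2,X_3|]/(f)_\delta$ should fail to be normal: many $2$-dimensional unramified complete intersections are normal yet not log-regular, and normality alone cannot distinguish them. The paper instead proves \Cref{NonLogRegular}, a structural classification statement: a $2$-dimensional unramified Gorenstein complete local ring with reduced mod-$p$ reduction, if it were log-regular, would by Kato's structure theorem and the Gorenstein classification of $2$-dimensional toric rings be isomorphic to $\setZ_p[|s^{n+1},st,t^{n+1}|]/(p-s^{n+1})$, whose mod-$p$ reduction is non-reduced --- contradiction. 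Your $R_1$ strategy would silently fail if the ring happens to be normal, whereas the paper's argument bypasses normality altogether. You need a different idea here.
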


\begin{proof}
    The last claim follows from (1), (2), and \Cref{NonLogRegular} below since \(\setZ_2[|X_1, X_2, X_3|]/(f)_{\delta}\) is a \(2\)-dimensional non-regular unramified Gorenstein complete local ring and its mod \(2\)-reduction is reduced if at least two of \(n_1\), \(n_2\), or \(n_3\) are odd.
    Note that the non-regularity follows from the following argument: Take the maximal ideal \(\mfrakm \defeq (2, X_1, X_2, X_3)\) of \(\setZ_2[|X_1, X_2, X_3|]\).
    If any \(n_i\) is strictly greater than \(1\), then \((f)_{\delta}\) is contained in \(\mfrakm^2\) and thus the embedding dimension of \(\setZ_2[|X_1, X_2, X_3|]/(f)_{\delta}\) is \(4\).
    If some \(n_i\) is equal to \(1\), for example \(n_1 = 1\), then \(\setZ_2[|X_1, X_2, X_3|]/(f)_{\delta}\) is isomorphic to \(\setZ_2[|X_2, X_3|]/(X_2^{2n_2} + X_2^{n_2}X_3^{n_3} + X_3^{2n_3})\) but still the embedding dimension is \(3\) since the defining ideal is contained in \((2, X_2, X_3)^2\). Therefore, in any case, \(\setZ_2[|X_1, X_2, X_3|]/(f)_{\delta}\) is not regular.

    (1): Combining \Cref{DeltaHeightandInitialIdeal} and \Cref{DeltaHeightMonomial}, it is enough to show that \(\delta^2(t_1+t_2+t_3)\) (resp., \(\delta^2(t_1+t_2+t_3+t_4)\)) is contained in the ideal \((t_1+t_2+t_3, \delta(t_1+t_2+t_3))\) (resp., \((t_1+t_2+t_3+t_4, \delta(t_1+t_2+t_3+t_4))\)).
    This follows from a direct computation based on \Cref{ExampleBeta}.
    In particular, \((f)_{\delta} = (f, \beta^{(p+1)}(X_2^{n_2}, \dots, X_{p+1}^{n_{p+1}}))\) by \Cref{DeltaBeta}(1).

    (2): Let us prove the only if part. We prove the contrapositive, namely, we assume that at most one of \((n_1, \dots, n_{p+1})\) is prime to \(p\) and show that \(\setF_p[|X_1, \dots, X_{p+1}|]/(f)_{\delta}\) is not reduced.
    Since the ideal \((f)_{\delta} = (f, \beta^{(p+1)}(X_2^{n_2}, \dots, X_{p+1}^{n_{p+1}}))\) is independent on the order of \(X_1, \dots, X_{p+1}\), we can change the indices so that \(n_2\), \(n_3\) (resp., \(n_2\), \(n_3\), and \(n_4\)) are divisible by \(p\).
    
    First assume that \(p=2\) and \(n_2\) and \(n_3\) are even. Set \(n_2 = 2n_2'\) and \(n_3 = 2n'_3\) for some positive integer \(n_2', n_3'\). Then we have
    \begin{equation*}
        (X_2^{n_2} + X_2^{n_2'}X_3^{n_3'} + X_3^{n_3})^2 = X_2^{2n_2} + X_2^{n_2}X_3^{n_3} +X_3^{2n_3} = 0
    \end{equation*}
    in \(\setF_p[|X_1, X_2, X_3|]/(f)_{\delta}\). By \Cref{DeltaHeightandInitialIdeal}(2), this element \(X_2^{n_2} + X_2^{n_2'}X_3^{n_3'} + X_3^{n_3}\) is non-zero in \(\setF_p[|X_1, X_2, X_3|]/(f)_{\delta}\) because of the initial degree \(n_2 < 2n_2\).
    Therefore, \(\setF_p[|X_1, X_2, X_3|]/(f)_{\delta}\) is not reduced.

    Next we prove when \(p=3\). Assume that \(n_2\), \(n_3\) and \(n_4\) are divisible by \(3\). Set \(n_2 = 3n_2'\), \(n_3 = 3n'_3\) and \(n_4 = 3n_4'\) for some positive integer \(n_2', n_3', n_4'\). Based on \Cref{ExampleBeta}, we have
    \begin{equation*}
        ((X_2^{n_2'} + X_3^{n_3'})(X_2^{n_2'} + X_4^{n_4'})(X_4^{n_4'} + X_3^{n_3'}))^3 = 0
    \end{equation*}
    in \(\setF_3[|X_1, X_2, X_3, X_4|]/(f)_{\delta}\). This element \((X_2^{n_2'} + X_3^{n_3'})(X_2^{n_2'} + X_4^{n_4'})(X_4^{n_4'} + X_3^{n_3'})\) is non-zero because of the initial degree \(2n_2'\) of \(X_2\) is strictly less than the degree \(2n_2\) of \(X_2\) in the initial ideal in \Cref{DeltaHeightandInitialIdeal} and thus \(\setF_3[|X_1, X_2, X_3, X_4|]/(f)_{\delta}\) is not reduced.

    Next, we prove if part, namely, we assume that at least two of \((n_1, \dots, n_{p+1})\) are prime to \(p\) and show that \(\setF_p[|X_1, \dots, X_{p+1}|]/(f)_{\delta}\) is reduced.
    Since \((f)_{\delta}\) does not change by changing the indices of \(X_1, \dots, X_{p+1}\), we may assume that \(n_1, n_2\) are prime to \(2\) (resp., \(3\)).
    By (1) together with \Cref{DeltaStabilizationComputer} and \Cref{ExampleBeta}, it is enough to show that
    \begin{align*}
        \setF_2[X_2, X_3]/(X_2^{2n_2} + X_2^{n_2}X_3^{n_3} + X_3^{2n_3}) \quad \text{and} \\
        \setF_3[X_2, X_3, X_4]/((X_2^{n_2} + X_3^{n_3})(X_2^{n_3} + X_4^{n_4})(X_4^{n_4} + X_3^{n_3}))
    \end{align*}
    are reduced. This follows if we show \((\beta) \cap \setF_p[X_2^p, \dots, X_{p+1}^p] \subseteq (\beta^p)\) in \(\setF_p[X_2, \dots, X_{p+1}]\).

    In case of \(p = 2\), we already assume that \(n_1\) and \(n_2\) are odd. Take \(c = \beta b \in (\beta) \cap \setF_2[X_2^2, X_3^2]\). Dividing \(b\) by \(\beta = X_2^{2n_2} + X_2^{n_2}X_3^{n_3} + X_3^{2n_3}\) in \(\setF_2[X_3][X_2]\), we can write \(b = \beta b' + r\) for some \(b'\) and \(r\) in \(\setF_2[X_2, X_3]\) where the degree of \(X_2\) in \(r\) is strictly less than \(2n_2\).
    Suppose that \(X_2\) has an odd degree in \(b'\) and let \(K\) be the largest odd degree of \(X_2\) in \(b'\) with a non-zero coefficient.
    Then the leading term of \(c = \beta^2 b' + \beta r\) whose degree of \(X_2\) is \(K + 4n_2\) is non-zero because of the leading term of \(\beta^2 b'\). This contradicts the assumption that \(c\) belongs to \(\setF_2[X_2^2, X_3^2]\).
    So \(c - \beta^2 b' = \beta r \in (\beta) \cap \setF_2[X_2^2, X_3]\) holds.
    Therefore, replacing \(b\) by \(r\), we have \(c = \beta b \in (\beta) \cap \setF_2[X_2^2, X_3]\) and the degree of \(X_2\) in \(b\) is strictly less than \(2n_2\).
    It is enough to show that \(b = 0\).
    Assume the converse, namely, we write \(b = b_MX_2^M + \cdots + b_1X_2 + b_0\) where \(b_i \in \setF_2[X_3]\), \(b_M \neq 0\), and \(M < 2n_2\).
    By \Cref{ExampleBeta}, we have \(\beta = \beta^{(2+1)} = X_2^{2n_2} + X_2^{n_2}X_3^{n_3} + X_3^{2n_3}\) and thus
    \begin{align*}
        c & = \beta b = (b_MX_2^{M+2n_2} + \cdots + b_0X_2^{2n_2}) + (b_MX_2^{M+n_2}X_3^{n_3} + \cdots + b_{n_2}X_2^{2n_2}X_3^{n_3} + \cdots + b_0X_2^{n_2}X_3^{n_3}) \\
        & + (b_MX_2^MX_3^{2n_3} + \cdots + b_{n_2}X_2^{n_2}X_3^{2n_3} + \cdots + b_0X_3^{2n_3}).
    \end{align*}
    Since this belongs to \(\setF_2[X_2^2, X_3]\), the leading degree \(M+2n_2\) is even because of \(b_M \neq 0\) and thus \(M\) is even.
    If \(M < n_2\), then \(b_MX_2^{M+n_2}X_3^{n_3}\) is the only non-zero term whose degree of \(X_2\) is \(M+n_2\). This number \(M+n_2\) becomes odd and thus \(b_M\) should vanish but this contradicts the assumption.
    If \(M \geq n_2\), then
    \begin{equation} \label{EqOddDegreeVanishing}
        b_kX_2^{k+2n_2} + b_{k+n_2}X_2^{k+2n_2}X_3^{n_3} = 0
    \end{equation}
    holds for any odd number \(k\) in \(0 \leq k \leq M-n_2\) since we assume \(M < 2n_2\).
    Because of \(b_kX_2^kX_3^{2n_3} = 0\) for any odd number \(k\) in \(0 \leq k \leq n_2-1\), the equality (\ref{EqOddDegreeVanishing}) implies \(b_{k+n_2} = 0\) for any odd number \(k\) in \(0 \leq k \leq \min\{M-n_2, n_2-1\}\), i.e., \(b_l = 0\) for any even number \(l\) in \(n_2 \leq l \leq \min\{M, 2n_2-1\}\).
    Since \(M\) is even and strictly less than \(2n_2\), this vanishing implies \(b_M = 0\) but this contradicts \(b_M \neq 0\).
    Therefore, \(b = 0\) and this completes the proof.

    For the case of \(p=3\), we already assume that \(n_1\) and \(n_2\) are prime to \(3\). Take \(c = \beta b \in (\beta) \cap \setF_3[X_2^3, X_3^3, X_4^3]\).
    Since \(X_2^{n_2} + X_3^{n_3}\) and \(X_2^{n_2} + X_4^{n_4}\) are separable in \(\setF_3(X_3, X_4)[X_2]\), so is their product \(\beta\).
    Taking the different roots \(s_1, \dots, s_{2n_2}\) of \(\beta\) in \(\overline{\setF_3(X_3, X_4)}\), we have \(\restr{c}{X_2=s_i} = 0\).
    Since \(c\) belongs to \(\setF_3[X_2^3, X_3^3, X_4^3]\) and \(s_i\) are different from each other, \(c\) is divisible by \((X_2-s_i)^3\) and thus \(c\) is divisible by \(\beta^3\) in \(\overline{\setF_3(X_3, X_4)}[X_2]\).
    Therefore, since \(c\) and \(\beta\) are both in \(\setF_3[X_3, X_4][X_2]\), as in the proof of \Cref{DivisibleClaim}, comparing the coefficients shows that \(c\) is divisible by \(\beta^3\) in \(\setF_3[X_2, X_3, X_4]\).

    (3): Set \(\Lambda\) to be \(\setZ_2\) or \(\setF_2\) and \(f_2 \defeq X_2^{2n_2} + X_2^{n_2}X_3^{n_3} + X_3^{2n_3}\). Note that \((f)_{\delta} = (f, f_2)\) in \(\Lambda[X_1, X_2, X_3]\) by (1).
    If \(n_2\) and \(n_3\) are odd numbers, then \(X_2^{n_2} + X_3^{n_3}\) has no factors with multiplicity strictly greater than \(1\) in \(\Lambda[X_2, X_3]\) (this follows from, for example, taking partial derivation) and Eisenstein criterion implies that \(X_1^{n_1} + X_2^{n_2} + X_3^{n_3}\) is irreducible in \(\Lambda[X_1, X_2, X_3]\). Other cases also follow from the same argument and so \(f\) is a monic irreducible polynomial in \(\Lambda[X_1, X_2, X_3]\).
    Then we can take a finite free extension \(\Lambda[X_2, X_3] \hookrightarrow \Lambda[X_1, X_2, X_3]/(f)\) and its base change 
    \begin{equation*}
        R \defeq \Lambda[X_2, X_3]/(f_2) \hookrightarrow \Lambda[X_1, X_2, X_3]/(f, f_2) \eqdef S
    \end{equation*}
    is also a finite free extension from an integral domain \(R\) since \(f_2\) is irreducible in \(\Lambda[X_2, X_3]\) by our assumption. Especially \(S\) is \(R\)-torsion-free and induces an injection \(S \hookrightarrow (R\setminus \{0\})^{-1}S\).
    Take any non-zero element \(s\) of \(S\) and we can find a monic polynomial \(s^m + r_1s^{m-1} + \cdots + r_m = 0\) with \(r_i \in R\).
    Then \(s (s^{m-1} + r_1s^{m-2} + \cdots + r_{m-1}) = -r_m\) in \(S\) holds and thus \(s\) is a unit in \((R \setminus \{0\})^{-1}S\).
    This implies that \((R \setminus \{0\})^{-1}S\) is a field and thus the subring \(S = \Lambda[X_1, X_2, X_3]/(f, f_2)\) is also an integral domain.
\end{proof}

\begin{example} \label{ExampleDeltaStabilizationp2}
    Set a \(\delta\)-structure on \(\setZ_2[X, Y, Z]\) by \(\delta(X) = \delta(Y) = \delta(Z) = 0\).
    Assume \(p = 2\) and take \(f \defeq X^3 + Y^4 + Z^5\) in \(\setZ_2[X, Y, Z]\). Then \Cref{DeltaStabilizationComputer} tells us that
    \begin{align*}
        (f)_{\delta} = (X^3 + Y^4 + Z^5, Y^8 + X^3Y^4 + X^6)
    \end{align*}
    holds in \(\setZ_2[X, Y, Z]\).
    By \Cref{p23CaseDeltaStabilization}, the \((X, Y, Z)\)-adic completion (resp., localization at \((p, X, Y, Z)\)) of the tower
    \begin{align*}
        & \setZ_2[X, Y, Z]/(X^3 + Y^4 + Z^5, Y^8 + X^3Y^4 + X^6) \hookrightarrow \\
        & \cdots \hookrightarrow \setZ_2[2^{1/2^i}][X^{1/2^i}, Y^{1/2^i}, Z^{1/2^i}]/(X^{3/2^i} + Y^{4/2^i} + Z^{5/2^i}, Y^{8/2^i} + X^{3/2^i}Y^{4/2^i} + X^{6/2^i}) \hookrightarrow \cdots
    \end{align*}
    is a perfectoid tower arising from \((\setZ_2[|X, Y, Z|]/(X^3 + Y^4 + Z^5, Y^8 + X^3Y^4 + X^6), (2))\) (resp., \((\setZ_2[X, Y, Z]_{(2, X, Y, Z)}/(X^3 + Y^4 + Z^5, Y^8 + X^3Y^4 + X^6), (2))\)). The first term of the tower is a complete intersection (resp., complete intersection domain) but they are not log-regular rings.
\end{example}

We provide a sufficient condition for the non-log-regularity of the first term of the tower in \Cref{p23CaseDeltaStabilization}. This proof is based on a private communication with Shinnosuke Ishiro.

\begin{lemma} \label{NonLogRegular}
    Let \(R\) be a complete Noetherian local ring of dimension \(2\) with residue field \(k\) of mixed characteristic \((0, p)\).
    Assume that \(R\) is unramified Gorenstein but not regular and \(R/pR\) is reduced.
    Then \(R\) is not isomorphic to any local log-regular ring.
\end{lemma}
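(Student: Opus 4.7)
The plan is to argue by contradiction using Kato's structure theorem for complete local log-regular rings of mixed characteristic. Suppose $R$ admits a log structure making it a local log-regular ring in the sense of \cite[Theorem 2.22]{ishiro2025Perfectoid}. Then one can write
\[
R \cong C(k)[|\mathcal{Q}|]/(p-f)
\]
for some fine sharp saturated monoid $\mathcal{Q}$ with $\mathrm{rank}\,\mathcal{Q}^{\mathrm{gp}}=2$ and some $f \in C(k)[|\mathcal{Q}|]$ with no nonzero constant term, where $C(k)$ denotes the Cohen ring of the residue field.

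First I would translate the three hypotheses on $R$ into combinatorial constraints on the pair $(\mathcal{Q},f)$. The unramified assumption $p\notin\mathfrak{m}_R^2$ forces the reduction $\bar{f}$ of $f$ modulo $p$ to contain a nonzero linear term $c\,e^{v}$ with $c\in k^{\times}$ and $v$ an irreducible generator of $\mathcal{Q}$. The Gorenstein assumption forces $\mathcal{Q}$ itself to be a Gorenstein monoid, and the classification of rank-$2$ fine sharp saturated Gorenstein monoids restricts $\mathcal{Q}$ to either $\mathbb{N}^2$ or a Kleinian monoid $\mathcal{Q}_n = \langle (1,0),(1,1),\ldots,(1,n)\rangle$ for some $n\geq 1$, whose semigroup ring is presented as $k[|e_0,\ldots,e_n|]$ modulo the $2\times 2$ minors of the catalecticant matrix $\bigl(\begin{smallmatrix}e_0 & e_1 & \cdots & e_{n-1}\\ e_1 & e_2 & \cdots & e_n\end{smallmatrix}\bigr)$.

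Next I would handle each family separately. When $\mathcal{Q}=\mathbb{N}^2$, the ring $R$ is regular; this is incompatible with the structural hypotheses on $R$ in the intended applications, where $R$ has embedding dimension strictly greater than $\dim R$. When $\mathcal{Q}=\mathcal{Q}_n$ with $n\geq 2$, I would use the linear relation $\bar{f}=0$ to eliminate the distinguished generator $e^{v}$ from the catalecticant relations and verify that the resulting presentation forces an interior generator $e_j$ to become nilpotent in $R/pR$, contradicting reducedness. The remaining hypersurface case $n=1$, where $R$ takes a form like $C(k)[|a,c|]/(ac-g(p))$ whose mod-$p$ reduction can genuinely be reduced, would be excluded by comparing the minimal number of defining relations of $R$ against those of a log-regular $\mathcal{Q}_1$-structure, using the Hilbert--Burch resolution of codimension-$2$ Cohen--Macaulay quotients.

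The hardest step will be this last one: isolating a structural invariant of $R$ that is incompatible with every admissible pair $(\mathcal{Q}_n,f)$. This requires careful bookkeeping of embedding dimensions, minimal numbers of defining relations, and analytic branches of $R/pR$ at the closed point, together with the observation that $R$ must lie strictly outside the class of hypersurfaces that can arise from a log-regular structure.
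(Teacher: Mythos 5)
Your overall strategy is the same as the paper's: apply Kato's structure theorem to write $R \cong C(k)[|\mathcal{Q} \oplus \mathbb{N}^r|]/(p-f)$, use the Gorenstein hypothesis to pin down the possible rank-two monoids $\mathcal{Q}$, use unramifiedness to constrain the linear part of $f$, and derive a contradiction with reducedness of $R/pR$. The gap is in your Gorenstein classification. You assert that the rank-two fine sharp saturated Gorenstein monoids are $\mathbb{N}^2$ and the monoids $\mathcal{Q}_n = \langle (1,0), (1,1), \ldots, (1,n) \rangle$ whose semigroup rings are cut out by $2\times 2$ minors of a catalecticant matrix. These are the affine cones over rational normal curves of degree $n$, i.e.\ the cyclic quotient singularities $\tfrac{1}{n}(1,1)$; they are Gorenstein only when $n \mid 2$, and for $n \geq 3$ they have Cohen--Macaulay type $n-1 \geq 2$, hence are \emph{not} Gorenstein. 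Conversely, the rank-two Gorenstein cyclic quotient singularities are exactly the $A_m$-singularities $\tfrac{1}{m+1}(1,m)$, which are missing from your list once $m \geq 2$. The correct family, which the paper invokes via \cite[Remark 2.2 and Corollary 4.11]{ishiro2024Local}, is $\mathcal{Q} = \langle (m+1,0),(1,1),(0,m+1) \rangle$ with toric ring $k[s^{m+1}, st, t^{m+1}] \cong k[x,y,z]/(xz - y^{m+1})$, always a three-generator hypersurface.

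This error propagates: the determinantal and Hilbert--Burch analysis you outline is aimed at monoids $\mathcal{Q}_n$ with $n\geq 3$ that cannot arise, while every $A_m$ with $m \geq 2$ — precisely the cases the paper has to rule out — is absent from your case division. Once the classification is corrected, the endgame is much shorter than what you anticipate. The paper notes that unramifiedness forces the distinguished element $p-g$ to have $g$ equal to a corner generator $x$ or $z$, so $R/pR \cong \mathbb{F}_p[|s^{n+1},st,t^{n+1}|]/(s^{n+1})$, which is non-reduced because $(st)^{n+1} = s^{n+1}t^{n+1} = 0$ while $st\neq 0$. There is no need for a resolution-theoretic comparison of defining equations; the contradiction is an immediate nilpotency check on a single hypersurface quotient.
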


\begin{proof}
    If \(R\) is a local log-regular ring, there exists a log structure \(\mcalQ \xrightarrow{\alpha} R\) from a fine, sharp, and saturated monoid \(\mcalQ\) such that \(R/I_{\alpha}\) is regular and \(2 = \dim(R) = \dim(R/I_{\alpha}) + \dim(\mcalQ)\) (see \cite[Definition 2.19]{ishiro2025Perfectoid}).
    Since \(R\) is not regular, \(I_{\alpha}\) is non-zero and thus \(\dim(\mcalQ) > 0\). By the structure theorem of local log-regular ring (\cite[Theorem 2.22]{ishiro2025Perfectoid}), \(R \cong \setZ_p[|\mcalQ \oplus \setN^r|]/(p-f)\) holds where \(r \defeq \dim(R/I_{\alpha})\) and \(f\) has no non-zero constant term.
    If \(\dim(\mcalQ) = 1\), then \(\mcalQ\) should be equal to \(\setN\). So \(R \cong \setZ_p[|\setN^2|]/(\theta)\) is regular but this contradicts the non-regularity of \(R\).
    So \(\dim(\mcalQ)\) should be \(2\). Since \(R\) is Gorenstein, \cite[Remark 2.2 and Corollary 4.11]{ishiro2024Local} shows that \(R\) is isomorphic to \(\setZ_p[|s^{n+1}, st, t^{n+1}|]/(p-f) \cong \setZ_p[|x,y,z|]/(xz - y^{n+1}, p-g)\) for some \(g \in (x, y, z)\) and \(n \geq 2\). Since \(R\) is unramified, \(g\) is \(x\) or \(z\) and thus \(R\) is isomorphic to \(\setZ_p[|s^{n+1}, st, t^{n+1}|]/(p-s^{n+1})\).
    Taking modulo \(p\), this should be a non-reduced ring \(\setF_p[|s^{n+1}, st, t^{n+1}|]/(s^{n+1})\) but this contradicts the reduced property of \(R/pR\).
\end{proof}

\end{document}